\documentclass[11pt,reqno]{amsart}

\usepackage[bookmarksdepth=3]{hyperref}
\usepackage[margin=1in]{geometry}
\usepackage{tikz-cd}
\usepackage{amsmath,graphicx,amsthm,amsfonts,amssymb,amsxtra,mathtools,dsfont}
\usepackage{enumitem}
\usepackage{color}
\usepackage{todonotes}
\usepackage{bm}

\newtheorem{theorem}{Theorem}[subsection]

\newtheorem{lemma}{Lemma}[subsection]
\makeatletter\let\c@lemma\c@theorem\makeatother

\makeatletter\let\c@alg\c@theorem\makeatother

\newtheorem{proposition}{Proposition}[subsection]
\makeatletter\let\c@proposition\c@theorem\makeatother

\makeatletter\let\c@conj\c@theorem\makeatother

\newtheorem{corollary}{Corollary}[subsection]
\makeatletter\let\c@corollary\c@theorem\makeatother

\theoremstyle{definition}
\newtheorem{definition}{Definition}[subsection]
\makeatletter\let\c@definition\c@theorem\makeatother

\makeatletter\let\c@convention\c@theorem\makeatother

\newtheorem{example}{Example}[subsection]
\makeatletter\let\c@example\c@theorem\makeatother

\newtheorem{remark}{Remark}[subsection]
\makeatletter\let\c@remark\c@theorem\makeatother

\makeatletter\let\c@fact\c@theorem\makeatother

\makeatletter\let\c@note\c@theorem\makeatother

\numberwithin{equation}{subsection}

\usepackage[capitalise]{cleveref}
\crefname{theorem}{Theorem}{Theorems}
\crefname{fact}{Fact}{Facts}
\crefname{note}{Note}{Notes}
\crefname{lemma}{Lemma}{Lemmas}
\crefname{alg}{Algorithm}{Algorithms}
\crefname{remark}{Remark}{Remarks}
\crefname{example}{Example}{Examples}
\crefname{proposition}{Proposition}{Propositions}
\crefname{conjecture}{Conjecture}{Conjectures}
\crefname{convention}{Convention}{Conventions}
\crefname{corollary}{Corollary}{Corollaries}
\crefname{definition}{Definition}{Definitions}
\crefname{equation}{\!\!}{\!\!} 

\DeclareMathOperator{\Hom}{Hom}
\DeclareMathOperator{\wt}{wt}

\newcommand{\Z}{\mathbb{Z}}
\newcommand{\Q}{\mathbb{Q}}
\newcommand{\N}{\mathbb{N}}

\renewcommand{\AA}{\mathcal{A}}
\newcommand{\A}{A}
\newcommand{\Orbit}{\mathfrak{O}}

\newcommand{\IntegerValuedRing}{\Z\langle T \rangle}

\newcommand{\gl}{\mathfrak{gl}}
\newcommand{\fh}{\mathfrak{h}}

\newcommand{\fb}{\mathfrak{b}}
\newcommand{\fl}{\mathfrak{l}}
\newcommand{\fp}{\mathfrak{p}}
\newcommand{\fu}{\mathfrak{u}}

\renewcommand{\k}{\mathds{k}}
\newcommand{\bi}{\mathbf{i}}
\newcommand{\bj}{\mathbf{j}}
\newcommand{\bk}{\mathbf{k}}

\newcommand{\bx}{\mathbf{x}}
\newcommand{\by}{\mathbf{y}}
\newcommand{\MM}{\mathcal{X}}

\newcommand{\End}{\operatorname{End}}
\newcommand{\Mat}{\operatorname{Mat}}
\newcommand{\MMat}{\mathcal{M}at}
\newcommand{\fS}{\mathfrak{S}}
\renewcommand{\SS}{\mathbb{S}}
\newcommand{\TT}{\mathbb{T}}

\newcommand{\NN}{\mathcal{N}}
\newcommand{\VV}{V}

\newcommand{\wideSinfinity}{\widehat{\mathfrak{S}}_{\infty}}
\newcommand{\Perm}{\operatorname{Perm}}

\newcommand{\Idealnd}{\mathcal{I}(n,d)}
\newcommand{\ev}{\operatorname{ev}}
\renewcommand{\dim}{\operatorname{dim}}
\newcommand{\Sntmod}{\SS(n,t)\text{-mod}}
\newcommand{\Sntmodlfd}{\SS(n,t)\text{-mod}_{\text{lfd}}}
\newcommand{\Sntmodfg}{\SS(n,t)\text{-mod}_{\text{fg}}}
\newcommand{\tr}{\operatorname{tr}}
\newcommand{\qrs}{q,r,s}
\newcommand{\dotU}{\dot{\mathbf{\mathcal{U}}}}
\newcommand{\balpha}{\bm{\alpha}}
\newcommand{\bbeta}{\bm{\beta}}
\newcommand{\blambda}{\bm{\lambda}}
\newcommand{\bmu}{\bm{\mu}}
\newcommand{\bnu}{\bm{\nu}}
\newcommand{\bgamma}{\bm{\gamma}}
\newcommand{\bdelta}{\bm{\delta}}
\newcommand{\CoDetBasis}{\mathcal{CB}}
\newcommand{\StdBasis}{\mathcal{SB}}
\newcommand{\CB}{\operatorname{CB}}

\allowdisplaybreaks

\title{Interpolating Schur Algebras}

\author{Addison Day}
\address{Department of Mathematics \\
		Oregon State University \\
		Corvallis, OR 97331, USA}
\email{dayadd@oregonstate.edu}

\author{Jonathan R.\  Kujawa}
\address{Department of Mathematics \\
		Oregon State University \\
		Corvallis, OR 97331, USA}
\email{kujawaj@oregonstate.edu}

\thanks{JRK was supported in part by Simons Foundation Grant SFI-MPS-TSM-00014272.}

\date{\today}

\subjclass{Primary 17B10, 16G99. Secondary 20B30, 20C30.}
\keywords{Schur algebras, infinite symmetric group, codeterminants, polynomial representations}

\begin{document}

\begin{abstract}
We introduce and study a one-parameter family of algebras that naturally generalize the Schur algebras.  We show the Schur algebra is canonically a quotient when the parameter is a nonnegative integer, characterize when they are semisimple, show they are based quasi-hereditary, and that their category of representations is a highest weight category that can be identified as a subcategory of parabolic category $\mathcal{O}$ for the general linear Lie algebra.
\end{abstract}

\maketitle
\tableofcontents

\section{Introduction}

\subsection{Overview}\label{SS:background}  Let $\k$ be a field of characteristic zero.  Let $V_{n}$ be an $n$-dimensional $\k$-vector space.  There is an action of the symmetric group on $d$ letters, $\fS_{d}$, on the $d$-fold tensor product $V_{n}^{\otimes d}$ by permutation of the tensor factors.  For more than a century, the Schur algebra 
\[
S(n,d) := \End_{\fS_{d}}\left(V_{n}^{\otimes d} \right)
\] has played a central role in representation theory.  Via Schur--Weyl duality it acts as a bridge between the representation theories of the symmetric and general linear groups.  It is also the prototypical example of a quasi-hereditary algebra and its representations are the prototypical example of a highest weight category.

The purpose of this paper is to introduce and study a family of algebras, $\SS(n,t)$, where $t \in \k$.  These algebras and their representations situate the classical Schur algebras within a one-parameter family in much the same way Deligne's category, $\operatorname{Rep}(\fS_{t})$, does for the representation theory of the symmetric group $\fS_{d}$ \cite{Deligne}.  Indeed, a key ingredient for this paper is the approach to $\operatorname{Rep}(\fS_{t})$ based on oligomorphic groups recently developed by Harman--Snowden~\cite{HarmanSnowden}.

An oligomorphic group is a group $G$ acting faithfully on a set $X$ in such a way that, for all $r \geq 0$, $G$ has finitely many orbits under the diagonal action on the $r$-fold product of $X$ with itself.  For example, $\fS_{\infty}$ acting on the natural numbers is an oligomorphic group.   Given an oligomorphic group, there is a notion of a measure and, hence, a theory of integration.  In the case of $\fS_{\infty}$, there is a unique measure $\mu_{t}$ for each $t \in \k$.  Given the data of an oligomorphic group with a measure, Harman--Snowden define a category of so-called permutation modules where the composition operation is given by convolution with respect to the measure. For a fixed $t \in \k$ we define $\SS(n,t)$ to be the path algebra of a subcategory of the corresponding permutation category for $\fS_{\infty}$:
\[
\SS(n,t) = \bigoplus_{\bbeta , \bdelta   \in \Lambda(n)_{t}} \Hom_{\Perm(\fS_{\infty};\mu_{t})}\left(\fS_{\infty}/\fS_{\bbeta },\fS_{\infty}/\fS_{\bdelta} \right).
\] See \cref{SS:Interpolating-Schur-Algebra} for a detailed definition.  The goal of the paper is to study the structure and representation theory of $\SS(n,t)$.

\subsection{Main Results}\label{SS:Main-Results}

Our first significant results are \cref{P:Snt-Basis-Theorem,T:product-formula}.  They give a double coset basis for $\SS(n,t)$ along with a combinatorial product rule that generalizes a known basis and product rule for $S(n,d)$.  As a consequence, $\SS(n,t)$ is seen to be the path algebra of a category first introduced by Harman in his thesis~\cite{HarmanThesis} and generalized in~\cite{Ryba}.  That the oligomorphic definition coincides with these earlier constructions will not surprise experts (e.g., the authors of~\cite{HarmanSnowden}).  These results provide a crucial connection to $S(n,d)$ that is used throughout the paper. 

The category $\Perm(\fS_{\infty};\mu_{t})$ is a spherical $\k$-linear tensor category.  We can therefore consider the semisimplification, $\overline{\Perm}(\fS_{\infty};\mu_{t})$, obtained by taking the quotient of $\Perm(\fS_{\infty};\mu_{t})$ by the tensor ideal of so-called negligible morphisms.  When $n \geq 2$, \cref{C:Semisimplification-and-finite-Snd} shows that the subcategory of $\Perm(\fS_{\infty};\mu_{t})$ that defines $\SS(n,t)$ is not semisimple if and only if $t \in \Z_{\geq 0}$.  In the case when $t = d \in \Z_{\geq 0}$, \cref{T:SSmodIdeal-is-isomorphic-to-Snd} demonstrates that the classical Schur algebra is the path algebra of the semisimplification:
\[
S (n,d) \cong \bigoplus_{\bbeta , \bdelta   \in \Lambda(n)_{d}} \Hom_{\overline{\Perm}(\fS_{\infty};\mu_{d})}\left(\fS_{\infty}/\fS_{\bbeta },\fS_{\infty}/\fS_{\bdelta} \right).
\]  In particular, there is a canonical quotient map $\SS(n,d) \twoheadrightarrow S(n,d)$.

In \cref{S:Codeterminant-Basis-for-SnT}  we establish that $\SS(n,t)$ has a basis that parallels Green's codeterminant basis for $S(n,d)$.  This gives $\SS(n,t)$ the structure of an upper finite based quasi-hereditary algebra in the sense of~\cite{BrundanStroppel} and makes the category of locally finite-dimensional $\SS(n,t)$-modules a highest weight category in the sense of \cite{CPS}.  In particular, there are standard modules $\Delta(\blambda )$ for $\blambda \in \Lambda^{+}(n)_{t}$, where $\Lambda^{+}(n)_{t}$ is the set of all ``partitions'' of $t$ with $n$ parts.  Each standard module has simple head $L(\blambda )$ and $\left\{L(\blambda ) \mid \blambda \in \Lambda^{+}(n)_{t} \right\}$ is a complete, irredundant set of simple $\SS(n,t)$-modules.  Moreover, \cref{T:Snt-when-t-is-natural-is-not-semisimple} classifies when $\SS(n,t)$ is semisimple: it is semisimple if and only if $n =1$ or if $ n\geq 2$ and $t \not\in \Z_{\geq 0}$.

Another consequence of the codeterminant basis is that $\SS(n,t)$ admits two natural integral forms.  There is the naive one given by the span of the double coset basis. The second and more interesting one is given by the span of the codeterminant basis.  This form should be viewed as the Schurification of $\SS(n,t)$ in the sense of~\cite{KleshchevMuth} and we expect this second form to have favorable properties (e.g., with respect to base change).  Also of interest, \cref{SS:naive-Z<T>-form,P:TnT-is-an-integral-form} demonstrate for $\k = \Q (T)$, the field of rational functions, that  $\SS (n,T)$ admits analogous forms over the ring of integer-valued polynomials.

Finally, in \cref{S:Enveloping-Algebra} we show that $\SS(n,t)$ can be realized as the quotient of an idempotented form of the enveloping algebra for the general linear Lie algebra, $\gl_{n}(\k )$.  This gives rise to an inflation functor from $\SS(n,t)$-modules to $U(\gl_{n}(\k ))$-modules which allows the category of finitely generated $\SS(n,t)$-modules to be identified as a full subcategory of an appropriate parabolic category $\mathcal{O}$ corresponding to the Levi subalgebra $\gl_{1}(\k ) \oplus \gl_{n-1}(\k )$.  A connection between Deligne's category and this parabolic category $\mathcal{O}$ was previously observed in~\cite{Aizenbud}.

\subsection{Future Directions}\label{SS:Future-Directions}  The results of this paper suggest a number of natural directions of investigation that we look forward to pursuing.  There should be an interpolation, $\SS_{q}(n,t)$, of the quantized Schur algebras $S_{q}(n,d)$. The oligomorphic group framework of~\cite{HarmanSnowden} can be used to define interpolations of various generalizations of the Schur algebra (e.g., the cyclotomic Schur algebras).  More exotic would be to formulate and study Delannoy Schur algebras~\cite{HarmanSnowdenSnyder} and Schur algebras for other interesting oligomorphic groups (e.g., see \cite[Section 1.3.7]{HarmanSnowden} for a list).  In a different direction, we anticipate that $\SS(n,t)$ can be used to study the stable (aka reduced) Kronecker coefficients.  A related question is to establish a Schur--Weyl duality relationship between $\SS (n,t)$ and Deligne's category.  Lastly, it is known that one can define $\operatorname{Rep}(GL_{r})$ for any $r \in \k$ and thereby meaningfully interpolate the dimension of $V_{n}$. There should be a corresponding two-variable interpolation of the Schur algebra, $\SS(r, t)$, where $r,t \in \k$.

\section{Notation and Combinatorics}\label{S:Notation-and-Combinatorics}
Let us establish notation, conventions, and combinatorics that will be used throughout the paper.

\subsection{Notation}\label{SS:Notation}  Let $\k$ denote a field of characteristic zero. Let $\N = \Z_{>0}$ denote the natural numbers and let $\Z_{\geq 0}$ denote the nonnegative integers.  Given $x,y \in \Z$ with $x \leq y$, let $[x,y] = \left\{x, x+1, \dotsc , y  \right\}$ and let $[x,\infty] = \left\{x, x+1, x+2, \dotsc  \right\}$. If $x>y$ we set $[x,y]=\emptyset$.  Given a set $X$, write $|X|$ for the cardinality of $X$.  For $k \geq 1$, let $X^{k}$ be the set of $k$-tuples of elements from $X$, and let $X^{(k)}$ be the set of all subsets of $X$ of size $k$.  If $X$ and $Y$ are $G$-sets for some group $G$, then $G$ acts diagonally on $X \times Y$.  If $X$ is a $G$-set for some group $G$, then $G$ acts on $X^{(k)}$ and on set partitions of $X$ by acting on the entries of each subset.  

Let $R$ be a commutative ring with $1$. Given $t \in R$ and $k \in \Z_{\geq 0}$, set 
\begin{equation}\label{E:binom-def}
\binom{t}{k} = \binom{t}{t-k,k} = \frac{t(t-1)(t-2)\dotsb  (t-k+1)}{k!}.
\end{equation}
By convention, $\binom{t}{0}=1$.  More generally, if $m, \ell \in \Z$ with $m \geq  \ell$, and $k_{2}, \dotsc , k_{r} \in \Z_{\geq 0}$, then set
\begin{equation}\label{E:multinom-def}
\binom{t-\ell}{t-m, k_{2}, \dotsc , k_{r}} = \frac{(t-\ell)(t-\ell-1)(t-\ell-2)\dotsb  (t-m+1)}{k_{2}!k_{3}!\dotsb k_{r}!}.
\end{equation}
We will generally assume that $R$ and $t$ are such that all expressions of the form \cref{E:binom-def,E:multinom-def} are elements of $R$. For example, this is automatic if $R$ contains $\mathbb{Q}$, or if $t$ is an integer and $m, \ell, k_{2}, \dotsc , k_{r}$ are chosen so that the above expressions are integer-valued polynomials in the variable $t$.

\subsection{Ring of Integer-Valued Polynomials}\label{SS:RingofIntegerValuedPolys}

Let 
\[
\IntegerValuedRing  = \left\{ p(T) \in \Q [T] \mid p(\Z )\subseteq \Z  \right\}
\] denote the ring of integer-valued polynomials in the variable $T$.  If $p(T) \in \Q [T]$ has degree $d$, then $p(T)$ is an element of $\IntegerValuedRing$ if and only if $p(a)$ is an integer for $d+1$ consecutive integers $a$.  It is also well known that $p(T) \in \Q [T]$ is an element of $\IntegerValuedRing$ if and only if 
\[
p(T) = \sum_{k=0}^{d} c_{k}\binom{T}{k}
\] for some $d \in \Z_{\geq 0}$ and $c_{0}, c_{1}, \dots, c_{d} \in \Z$.
 
Note that if $m, \ell, k_{2}, \dotsc , k_{r} \in \Z_{\geq 0}$ and $m = \ell + k_{2}+\dots + k_{r}$, then
\begin{equation}\label{E:multinomial-polynomial}
\binom{T-\ell}{T-m, k_{2}, \dotsc , k_{r}}
\end{equation}
is an integer-valued polynomial because it computes a multinomial coefficient when $T$ is evaluated at all sufficiently large integers.

\subsection{Classical Combinatorics}\label{SS:Combinatorics}

\subsubsection{Compositions and Partitions}\label{SS:Compositions-and-Partitions}  Given $n \in \N \cup \{\infty \}$ and $d \in \Z_{\geq 0}$, a \emph{composition} of $d$ with $n$ \emph{parts} is a sequence of nonnegative integers $\bgamma = (\gamma_{1}, \dotsc, \gamma_{n} )$ such that $|\bgamma| := \sum_{k \in [1,n]} \gamma_{k} = d$. Note that when $n=\infty$, this implies the sequence $\bgamma$ is eventually $0$.  Given a composition $\bgamma$, a \emph{part} of $\bgamma$ is one of the entries of the sequence. We emphasize that we allow parts to equal zero and those parts are not to be ignored.    A \emph{partition} of $d$ is a composition $\bgamma = (\gamma_{1}, \dots , \gamma_{n})$ of $d$ that satisfies $\gamma_{1} \geq  \dotsb \geq \gamma_{n}$.  For short, write $\bgamma \models d$ (resp., $\bgamma \vdash d$) when $\bgamma$ is a composition (resp., partition) of $d$.  There is a unique empty composition with zero parts and it is considered to be a partition of zero.

Given a composition $\bgamma = (\gamma_{1}, \dots , \gamma_{n})$ of $d$, there is an associated set partition $\left\{ \{\bgamma \}_{1}, \dots , \{\bgamma \}_{n} \right\}$ of $[1,d]$ given by
\begin{equation*}
\{\bgamma \}_{k} = \left[ 1+\sum_{i\in [1,k-1]}\gamma_{i},\sum_{i\in [1,k]} \gamma_{i} \right]
\end{equation*} for $k \in [1,n]$.
 In particular, $|\{\bgamma \}_{k}| = \gamma_{k}$ for all $k$.  Also, note that since we allow the parts of $\bgamma$ to equal zero, our definition of a set partition allows for the empty subset and it might appear multiple times.  

\begin{example}\label{Ex:Compositions-and-Partitions} The composition $\bgamma = (3,6,2,0,0)$ is a composition of $d=11$ with $n=5$ parts.  The set partition of $[1,11]$ corresponding to $\bgamma$ is 
\[
\left\{  \{\bgamma \}_{1} = \{1,2,3 \}, \{\bgamma \}_{2}=\{4,5,6,7,8,9 \}, \{\bgamma \}_{3}= \{10,11 \},  \{\bgamma \}_{4}=\emptyset,  \{\bgamma \}_{5}=\emptyset\right\}. 
\]  
\end{example}

Given $n \in \N \cup \left\{\infty \right\}$ and $d \in \Z_{\geq 0}$, write 
\begin{align*}
\Lambda(n,d) &= \left\{ \bgamma \models d \mid \text{$\bgamma$ has $n$ parts} \right\},\\
\Lambda^{+}(n,d) &= \left\{ \bgamma \vdash d \mid \text{$\bgamma$ has $n$ parts} \right\},\\
\Lambda(n) &= \bigcup_{d \in \Z_{\geq 0}} \Lambda(n,d),\\
\Lambda^{+}(n) &= \bigcup_{d \in \Z_{\geq 0}} \Lambda^{+}(n,d).
\end{align*}

For $n \in \N \cup \left\{\infty \right\}$, let 
\[
X(n) = \bigoplus_{i \in [1,n]} \Z\varepsilon_{i}
\] denote the free abelian group on the set $\left\{\varepsilon_{i} \right\}_{i \in [1,n]}$.  We identify $\Lambda(n)$ and its various subsets with their images in $X(n)$ under the embedding 
\[
\bgamma  \mapsto \sum_{r \in [1,n]} \gamma_{r}\varepsilon_{r}.
\]  As we did here, we sometimes leave it to the reader to infer that $\gamma_{r}$ denotes the $r$th part of $\bgamma$ (and likewise for $\blambda, \bbeta, \dotsc$).

\subsubsection{Matrices and Arrays}\label{SSS:Matrices}  Fix $n \in \N \cup \left\{\infty \right\}$.  Given compositions $\bbeta, \bdelta \in \Lambda(n,d)$, let $\Mat(n, d, \bdelta, \bbeta)$ denote the set of $n \times n$ matrices $q = (q_{i,j})_{i,j \in [1,n]}$ which satisfy the following conditions:
\begin{itemize}
\item for all $i,j \in [1,n]$, $q_{i,j} \in \Z_{\geq 0}$;
\item $\sum_{i,j \in [1,n]} q_{i,j}=d$;
\item for each $j \in [1,n]$, $\sum_{i \in [1,n]} q_{i,j} = \beta_{j}$;
\item for each $i \in [1,n]$, $\sum_{j \in [1,n]} q_{i,j} = \delta_{i}$.
\end{itemize}  Observe that $\Mat(n, d, \bdelta, \bbeta)$ is a finite set for any $\bbeta, \bdelta \in \Lambda(n,d)$.  Given $q = (q_{i,j}) \in \Mat(n,d, \bdelta, \bbeta)$, let $q^{*} = (q_{j,i}) \in \Mat(n, d, \bbeta, \bdelta)$ denote the transpose matrix. Set 
\[
\Mat(n,d) = \bigcup_{\bbeta, \bdelta \in \Lambda(n,d)} \Mat(n, d, \bdelta, \bbeta).
\]  Let $\Mat^{+}(n,d)$ and $\Mat^{+}(n, d, \bdelta, \bbeta )$ denote the subsets of upper triangular matrices.  Likewise,  let $\Mat^{-}(n,d)$ and $\Mat^{-}(n, d, \bdelta, \bbeta )$ denote the subsets of lower triangular matrices.

For any $\qrs \in \Mat(n,d)$, write $\A(n, \qrs)$ for the set of arrays $A = (A_{i,j,k})_{i,j,k \in [1,n]}$ which satisfy the following conditions:
\begin{itemize}
   \item for all $i, j, k \in [1,n]$, $A_{i,j,k} \in \Z_{\geq 0}$;
    \item for each pair $i,j \in [1,n]$, $\sum_{k \in [1,n]} A_{i,j,k} = q_{i,j}$;
    \item for each pair $j,k \in [1,n]$, $\sum_{i\in [1,n]} A_{i,j,k} = r_{j,k}$;
    \item for each pair $i, k \in [1,n]$, $\sum_{j\in [1,n]} A_{i,j,k} = s_{i,k}$.
\end{itemize}  For a fixed $\qrs \in \Mat(n,d)$, the set of arrays  $\A(n,\qrs)$ is finite.

\subsubsection{Semistandard Fillings of Young Diagrams and Semistandard Matrices}\label{SSS:Semi-Standard-Fillings}

Given a partition $\blambda = (\lambda_{1}, \dots , \lambda_{n})$ of $d$, there is the associated \emph{Young diagram of shape $\blambda$}, $Y^{\blambda}$. This is an array of one-by-one boxes with $\lambda_{i}$ boxes in the $i$th row, where the boxes are left justified.  A \emph{Young tableau of shape $\blambda$} is a filling of the boxes of $Y^{\blambda}$ with natural numbers. Call the filling \emph{row semistandard} if the entries are weakly increasing from left-to-right along rows, and call it \emph{semistandard} if it is  weakly increasing from left-to-right along rows and strictly increasing from top-to-bottom along columns.

Let $\blambda$ be a partition of $d$ and $\bbeta$ a composition of $d$.  Let $q= (q_{i,j}) \in \Mat(n, d, \blambda, \bbeta)$.  There is a unique tableau of shape $\blambda$ which is row semistandard and has $q_{i,j}$ entries equal to $j$ in row $i$ of $Y^{\blambda}$.  For short, we call this \emph{the filling of $Y^{\blambda}$ by $q$}.  Observe that a filling of $Y^{\blambda}$ cannot give a semistandard tableau if a $k$ appears anywhere in rows $k+1, \dotsc , n$ of $Y^{\blambda}$. That is, if the filling of $Y^{\blambda}$ by $q$ is semistandard, then $q$ must be upper triangular.

\subsubsection{Tuples and Weights}\label{SSS:Tuples-and-Weights}
Given $n \in \N \cup \left\{\infty \right\}$ and $d \in \N$, let 
\[
I(n,d) = \left\{\bi = (i_{1}, \dotsc , i_{d}) \mid i_{1}, \dotsc , i_{d} \in [1,n] \right\}
\] be the set of $d$-tuples with entries from $[1,n]$.  The symmetric group on $d$ letters, $\fS_{d}$, has a right action on $I(n,d)$ via place permutation. That is, the action is given by
\[
(i_1,\ldots,i_d)\cdot\sigma=(i_{\sigma (1)},\ldots,i_{\sigma (d)})
\]
for all $\sigma \in \fS_d$ and all $(i_1,\ldots,i_d)\in I(n,d).$

Given $\bi = (i_{1}, \dots , i_{d}) \in I(n,d)$, the \emph{weight} of $\bi$ is $\wt (\bi ) = \sum_{k \in [1,d]} \varepsilon_{i_{k}} \in X(n)$.  Observe that $\bi$ and $\bj$ are in the same $\fS_{d}$-orbit if and only if $\wt (\bi ) = \wt (\bj )$.  Let $\Orbit (n,d)$ denote the set of $\fS_{d}$-orbits in $I(n,d) \times I(n,d)$.  Write $\overline{(\bi, \bj )} \in \Orbit(n,d)$ for the orbit containing $(\bi , \bj ) \in I(n,d) \times I(n,d)$.

There is a well-defined function
\begin{align*}
\Orbit(n,d) &\to \Lambda(n,d) \times \Lambda(n,d), \\
\overline{(\bi, \bj )} &\mapsto (\wt (\bi), \wt (\bj )).
\end{align*}
  For $(\bmu , \bnu ) \in \Lambda(n,d) \times \Lambda(n,d)$, write $\Orbit(\bmu , \bnu)$ for the set of orbits in $\Orbit(n,d)$ which go to $(\bmu, \bnu)$ under this map.

\subsection{Combinatorics with a \texorpdfstring{$t$}{t}}\label{SS:t-Combinatorics}
\subsubsection{Compositions and Partitions with a \texorpdfstring{$t$}{t}}\label{SSS:compositions-and-partitions-of-t}
Let $n \in \N$.  Given a fixed $t \in R$, let
\[
\Lambda(n)_{t} = \left\{ \bbeta = \left( \beta_{1}, \beta_{2}, \dotsc , \beta_{n}\right) \in  \left(t- \Z  \right) \times \Z_{\geq 0}^{n-1} \; \left| \;  \sum_{i\in [1,n]} \beta_{i} = t \right. \right\},
\] and let
\[
\Lambda^{+}(n)_{t} = \left\{ \bbeta = \left( \beta_{1}, \beta_{2}, \dotsc , \beta_{n}\right) \in  \Lambda(n)_{t} \mid \beta_{2}\geq \dotsb \geq \beta_{n} \right\}.
\]  We extend these definitions to $n = \infty$ by assuming all but finitely many $\beta_{i}$ equal zero.

Given $\bgamma = (\gamma_{1}, \dotsc , \gamma_{n}) \in \Lambda(n)_{t}$, there is an associated set partition $\left\{ \{\bgamma \}_{1}, \dots , \{\bgamma \}_{n}  \right\}$ of $\Omega:=\N$ given by setting 
\[
\{\bgamma \}_{k} = \left[1+ \sum_{i \in [2,k-1]} \gamma_{i},\sum_{i \in [2,k]} \gamma_{i} \right]
\] for $k \in [2, n]$, and 
\[
\{\bgamma \}_{1} =  \Omega \backslash \bigcup_{k \in [2,n]} \{\bgamma \}_{k}.
\]
  Observe that $\{\bgamma \}_{1}$ is an infinite set and $|\{\bgamma \}_{k}| = \gamma_{k}$ when $k \in [2, n]$.  Also observe that we allow for parts of $\bgamma$ to equal zero and for the corresponding set partition of $\Omega$ to include empty subsets.

\begin{example}\label{Ex:t-compositions-and-partitions}  Consider the composition $\bgamma = (t-11,3,6,2,0,0) \in \Lambda(6)_{t} $.  The corresponding set partition of $\Omega$ is 
\[
\left\{  \{\bgamma \}_{1} = \{12, 13, 14, \dots  \}, \{\bgamma \}_{2}=\{1,2,3\},\{\bgamma \}_{3}=\{4,5,6,7,8,9\},\{\bgamma \}_{4}=\{10,11\}, \{\bgamma \}_{5} = \emptyset, \{\bgamma \}_{6}=\emptyset \right\}.
\] 
\end{example}

Given $n \in \N \cup \{\infty \}$ and $t \in R$, let   
\[
X(n)_{t} = \left\{\left. \blambda = \sum_{i \in [1,n]}\lambda_{i}\varepsilon_{i} \in \bigoplus_{i \in [1,n]} R\varepsilon_{i} \; \right| \;  \text{$\lambda_{1} \in t - \Z$, and $\lambda_{i} \in \Z $ for $i \in [2,n]$ }  \right\}.
\]
There is an embedding 
\begin{equation}\label{E:Lambda-embedded-in-X(n)}
\Lambda(n)_{t} \hookrightarrow X(n)_{t}
\end{equation}
given by $\bgamma = (\gamma_{1}, \dotsc , \gamma_{n}) \mapsto \sum_{i \in [1,n]} \gamma_{i}\varepsilon_{i}$.  We identify $\Lambda(n)_{t}$ and its subsets with their image under this map.  

\subsubsection{Matrices and Arrays with a \texorpdfstring{$t$}{t}}\label{SSS:matricies-of-t}

Given $\bbeta, \bdelta \in  \Lambda(n)_{t}$, let $\MMat_{n}(\bdelta, \bbeta)_{t}$ denote the set of matrices $q = (q_{i,j})_{i,j \in [1,n]}$ which satisfy the following conditions:
\begin{itemize}
\item $q_{1,1} \in t- \Z  $;
\item for all other $(i,j) \in [1,n]^{2}$, $q_{i,j} \in \Z_{\geq 0}$;
\item $\sum_{(i,j) \in [1,n]^{2}} q_{i,j}= t$;
\item for each $j \in [1,n]$, $\sum_{i \in [1,n]} q_{i,j} = \beta_{j}$;
\item for each $i \in [1,n]$, $\sum_{j \in [1,n]} q_{i,j} = \delta_{i}$.
\end{itemize} Observe that $\MMat_{n}(\bdelta, \bbeta)_{t}$ is a finite set for each pair $\bbeta, \bdelta \in \Lambda(n)_{t}$. Given $q = (q_{i,j}) \in \MMat_{n}(\bdelta, \bbeta)_{t}$, let $q^{*} = (q_{j,i}) \in \MMat_{n}(\bbeta, \bdelta)_{t}$ denote the transpose matrix. Set 
\[
\MMat_{n}(t) = \bigcup_{\bbeta, \bdelta \in \Lambda(n)_{t}}\MMat_{n}(\bdelta, \bbeta)_{t}.
\]  Let $\MMat_{n}^{+}(t)$ and $\MMat_{n}^{+}(\bdelta  , \bbeta )_{t}$ denote the subsets of upper triangular matrices.  Likewise, let $\MMat_{n}^{-}(t)$ and $\MMat_{n}^{-}(\bdelta  , \bbeta )_{t}$ denote the subsets of lower triangular matrices.

For $q, r, s \in \MMat_{n}(t)$, let $\AA_{n}(\qrs)_{t}$ be the set of arrays $A = (A_{i,j,k})_{i,j,k \in [1,n]}$ which satisfy:
\begin{itemize}
\item $A_{1,1,1} \in t - \Z $;
\item for all other $i, j, k \in [1,n]$, $A_{i,j,k} \in \Z_{\geq 0}$;
\item for each pair $i,j \in [1,n]$, $\sum_{k \in [1,n]} A_{i,j,k} = q_{i,j}$;
\item for each pair $j,k \in [1,n]$, $\sum_{i \in [1,n]} A_{i,j,k} = r_{j,k}$;
\item  for each pair $i, k \in [1,n]$, $\sum_{j \in [1,n]} A_{i,j,k} = s_{i,k}$.
\end{itemize}  For a fixed $\qrs \in \MMat_{n}(t)$, the set of arrays  $\AA_{n}(\qrs)_{t}$ is finite.

\begin{remark}\label{R:finite-inside-of-t-shifted}
Observe that $\Lambda(n,d)$ is naturally a subset of $\Lambda(n)_{d}$:
\[
\Lambda(n,d) = \left\{\bgamma = (\gamma_{1}, \gamma_{2}, \dotsc , \gamma_{n}) \in \Lambda(n)_{d} \mid \gamma_{1} \geq 0 \right\}.
\]  Likewise, for $\bgamma, \bdelta \in \Lambda(n,d)$, $\Mat(n,d, \bgamma, \bdelta )$ is the subset of $\MMat_{n}(\bgamma , \bdelta )_{d}$ consisting of matrices $q=(q_{i,j})$ such that $q_{1,1} \geq 0$. For $\qrs \in \Mat(n,d)$, $A(n, \qrs )$ is the subset of $\AA_{n}(\qrs )_{d}$ consisting of arrays $A = (A_{i,j,k})$ such that $A_{1,1,1} \geq 0$.
\end{remark}
\subsubsection{Linear Translation}\label{SSS:translation-of-t-shifted-combinatorics}
Given $t_{1}, t_{2}\in R$, it will be convenient to have notation for the operation of linearly translating from $t_{1}$-combinatorics to $t_{2}$-combinatorics.  For example, if $\bbeta = (\beta_{1}, \beta_{2},\dots , \beta_{n}) \in \Lambda(n)_{t_{1}}$, then we write $\bbeta(t_{2})$ for $(\beta_{1}+t_{2}-t_{1}, \beta_{2}, \dotsc , \beta_{n}) \in \Lambda(n)_{t_{2}}$.  Clearly, $\bbeta \mapsto \bbeta(t_{2})$ defines a bijection $\Lambda(n)_{t_{1}} \to \Lambda(n)_{t_{2}}$.  There are entirely similar bijections $\MMat_{n}(\bdelta, \bbeta)_{t_{1}} \to \MMat_{n}(\bdelta(t_{2}), \bbeta(t_{2}))_{t_{2}}$ given by $q \mapsto q(t_{2})$, $\AA_{n}(\qrs)_{t_{1}} \to \AA_{n}(q(t_{2}), r(t_{2}), s(t_{2}))_{t_{2}}$ given by $A \mapsto A(t_{2})$, etc.

\begin{remark}\label{R:Evaluation-at-large-d-is-a-bijection}
In the special case when $t_{2}=d$ is an integer and $\bbeta \in \Lambda(n)_{t_{1}}$, then $\bbeta(d)$ is an element of $\Lambda(n,d)$ whenever $d$ is large enough to ensure $\bbeta (d)_{1} \geq 0$.
Likewise, for $q \in \MMat_{n}(t_{1})$ and sufficiently large $d$, $q(d)$ can and will be viewed as an element of $\Mat(n, d)$;  given $A \in \AA_{n}(\qrs)_{t_{1}}$, $A(d)$ can and will be viewed as an element of $\A(n, q(d), r(d), s(d))$. 

Since for any $t \in \k $ and any $\bbeta, \bdelta \in \Lambda(n)_{t}$ the set $\MMat_{n}(\bdelta, \bbeta)_{t}$ is finite, the previous paragraph implies that the map 
\[
\MMat_{n}(\bdelta, \bbeta)_{t} \to  \Mat(n, d, \bdelta(d), \bbeta(d)) 
\] given by $q \mapsto q(d)$ will be a bijection for $d \gg  0$.  Likewise, since for any $q,r,s \in \MMat_{n}(t)$ the set $\AA_{n}(\qrs)_{t}$ is finite,  the map 
\[
\AA_{n}(\qrs)_{t} \to \A (n, q(d), r(d), s(d))  
\] given by $A \mapsto A(d)$ will be a bijection for $d \gg 0$.
\end{remark}

\subsubsection{Dominance Order}\label{SSS:Dominance-Order} 
Given $\bbeta = (\beta_{1}, \dotsc, \beta_{n}), \bdelta = (\delta_{1}, \dotsc , \delta_{n}) \in \Lambda(n)_{t}$, write $\bbeta \preceq  \bdelta$ if
\begin{equation}\label{E:dominance-order-definition}
\sum_{i \in [1,k]} \left(  \delta_{i} - \beta_{i} \right) \geq   0
\end{equation}
for all $k \in [1,n]$.  This defines a partial order on $\Lambda(n)_{t}$ which we call the \emph{dominance order}.  Observe that this partial order is preserved under linear translation from $\Lambda(n)_{t_{1}}$ to $\Lambda(n)_{t_{2}}$, and that it restricts to the usual dominance order on $\Lambda(n,d)$.

\subsubsection{Semistandard Shifted Matrices with a  \texorpdfstring{$t$}{t}}\label{SSS:Standard-T-shifted-matrices}

Given $\blambda \in \Lambda^{+}(n)_{t}$, $\bbeta \in \Lambda(n)_{t}$, and $q = (q_{i,j}) \in \MMat_{n}(\blambda, \bbeta)_{t}$, then we say $q$ is \emph{semistandard} if and only if the filling of $Y^{\blambda(d)}$ by $q(d)$ is semistandard for $d \gg 0$.

The following lemma is straightforward.

\begin{lemma}\label{L:right-codeterminant-are-standard}   
Given $\blambda \in \Lambda^{+}(n)_{t}$ and $\bbeta \in \Lambda(n)_{t}$, the matrix $q = (q_{i,j}) \in \MMat_{n}(\blambda, \bbeta)_{t}$ is semistandard if and only if, for all sufficiently large $d$, $q(d) = (q_{i,j}(d))$ satisfies
\begin{equation}\label{E:right-codeterminant-condition}
\sum_{j \in [1,y]}q_{x,j}(d) \leq  \sum_{j \in [1,y-1]} q_{x-1,j}(d) \text{ for all } (x,y) \in [2,n] \times [1,n].
\end{equation}
\end{lemma}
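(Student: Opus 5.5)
The lemma reduces to a classical fact: a row semistandard tableau is semistandard exactly when its column-strictness can be read off from the cumulative row counts. Fix $d \gg 0$ so that $\blambda(d) \in \Lambda^{+}(n,d)$ is an honest partition and $q(d) \in \Mat(n,d,\blambda(d),\bbeta(d))$ (using \cref{R:Evaluation-at-large-d-is-a-bijection}). By definition, $q$ is semistandard if and only if the filling $T$ of $Y^{\blambda(d)}$ by $q(d)$ is semistandard for all large $d$. Since $T$ is row semistandard by construction, it suffices to prove, for each such $d$:
\[
T \text{ is column strict} \iff \text{\cref{E:right-codeterminant-condition} holds for } q(d).
\]

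Write $N_{x}(y) = \sum_{j \in [1,y]} q_{x,j}(d)$ for the number of entries $\le y$ in row $x$. Because row $x$ is weakly increasing, these entries occupy exactly the first $N_{x}(y)$ boxes of that row.

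For the forward direction, assume $T$ is column strict and fix $(x,y) \in [2,n]\times[1,n]$. The first $N_{x}(y)$ boxes of row $x$ have entries $\le y$. For each box in one of these columns $c \le N_{x}(y)$, the box of row $x-1$ directly above exists, since $\lambda_{x-1}(d) \ge \lambda_{x}(d)$ for $x \ge 2$ when $d$ is large. Column strictness then forces the entry above to be $< y$, hence $\le y-1$. So row $x-1$ has at least $N_{x}(y)$ entries $\le y-1$, which gives $N_{x}(y) \le N_{x-1}(y-1)$.

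For the converse, assume the inequalities hold and suppose, for contradiction, that column $c$ has entries $a$ in row $x-1$ and $b \le a$ in row $x$. Take $y = b$. Then $N_{x}(b) \ge c$, because the first $c$ entries of row $x$ are all $\le b$. On the other hand, $N_{x-1}(b-1) \le c-1$, because the entry in position $c$ of row $x-1$ is $a \ge b$, so only boxes before it can have entries $\le b-1$. Together these give $N_{x}(b) > N_{x-1}(b-1)$, contradicting \cref{E:right-codeterminant-condition}. In the edge case $y=1$, the right-hand sum is empty and the condition reads $q_{x,1}(d)=0$, which is consistent with the argument above.

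The only point requiring care is that the inequality on $\lambda_{1}$ versus $\lambda_{2}$ holds only for large $d$. This is exactly why both statements are phrased "for $d \gg 0$". The equivalence is therefore proved for each sufficiently large $d$, and hence also for the "for all sufficiently large $d$" versions.
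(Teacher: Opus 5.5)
Your proof is correct. It is the routine verification the paper has in mind when it calls the lemma straightforward and omits the proof: fix $d$ large enough that $\blambda(d)$ is a genuine partition and $q(d)$ has nonnegative entries, then compare column strictness between adjacent rows with the cumulative counts $N_{x}(y)$. You also handle the quantifier ``for $d \gg 0$'' correctly, since it is needed only to guarantee $\lambda_{1}(d) \geq \lambda_{2}$, so that the box above always exists when $x=2$.
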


For example, if $q$ is semistandard, then $q_{2,1}$ is bounded by an empty sum and must equal zero.  Likewise, $q_{3,1}=0$. And $q_{3,2}=0$ because $q_{2,1}=0$.  Continuing in this way shows that $q$ will be upper triangular.  As another example, note that \cref{E:right-codeterminant-condition} imposes no conditions on $q_{1,y}$ when $y \in [2,n]$.  Also observe that \cref{E:right-codeterminant-condition} is automatically satisfied for $q_{2,y}$ when $y \in [2,n]$ because $d \gg 0$.

\section{The Classical Schur Algebra}\label{S:Classical-Schur-Algebra}
We recall various well known results about the Schur algebra.  These results will be used, both literally and inspirationally, in what follows.

\subsection{Double Cosets}\label{SSS:Double-Cosets}

Given a composition $\bgamma = (\gamma_{1}, \dots , \gamma_{n})$ of $d$, write $\fS_{\bgamma} \cong \fS_{\gamma_{1}}\times\dots \times \fS_{\gamma_{n}}$ for the corresponding Young subgroup of $\fS_{d}$. Given compositions $\bbeta, \bdelta \in \Lambda(n,d)$, there is a bijection  $\Mat(n, d, \bdelta, \bbeta) \to \fS_{\bdelta}\backslash \fS_{d}/ \fS_{\bbeta}$ given by 
\[
q \mapsto \fS_{\bdelta}\sigma \fS_{\bbeta},
\] where $\sigma \in \fS_{d}$ is any permutation with the property that, for all $i,j \in [1,n]$, 
\[
q_{i,j}= \left|  \{\bdelta \}_{i} \cap \sigma\left( \{\bbeta \}_{j}\right) \right|.
\]

Given $\bbeta, \bdelta \in \Lambda(n,d)$, there is also a bijection 
\[
\varphi:\Orbit(\bdelta, \bbeta) \to \Mat(n, d, \bdelta, \bbeta)
\]
given as follows.   For $\bk = (k_{1}, \dotsc , k_{d}) \in I(n,d)$ and $x \in [1,n]$, let $[\bk ]_{x} = \left\{r \in [1,d] \mid k_{r}= x \right\}$ be the set of subscripts where that entry of $\bk$ has value $x$.   Let $\overline{(\bj, \bi )} \in \Orbit(\bdelta, \bbeta)$.  Then the bijection is given by 
\begin{gather}\label{E:Orbit-to-Matrix-Bijection}
\varphi\left( \overline{(\bj, \bi )}\right) = \left(q_{x,y} \right)_{x,y \in [1,n]}, \\
                  q_{x,y} = \left| [\bj]_{x} \cap [\bi]_{y} \right|.
\end{gather}
That is, $q_{x,y}$ is a count of the number of positions which are simultaneously equal to $x$ in $\bj$ and equal to $y$ in $\bi$.

\subsection{The Classical Schur Algebra}\label{SS:Classical-Schur-Algebra}
Fix $n \in \N$ and $d \in \N $.  Let $V_{n}=\k^{n}$ be an $n$-dimensional $\k$-vector space with distinguished basis $\{v_{i} \}_{i\in [1,n]}$.  Then $V_{n}^{\otimes d}$ has distinguished basis 
\[
\left\{v_{\bi}=v_{i_{1}} \otimes \dotsb \otimes v_{i_{d}} \mid \bi = (i_{1}, \dotsc , i_{d}) \in I(n,d) \right\}.
\]
 There is a right $\fS_{d}$-action on $V_{n}^{\otimes d}$ induced by the place permutation action on $I(n,d)$.  The classical \emph{Schur algebra} is defined to be the endomorphism algebra 
\begin{equation*}
S(n,d)=S_{\k}(n,d) = \End_{\k \fS_{d}}\left(V_{n}^{\otimes d} \right).
\end{equation*}

For $\bi, \bj \in I(n,d)$ write $f_{\bj, \bi} : V_{n}^{\otimes d} \to V_{n}^{\otimes d}$ for the $\k$-linear map given by $f_{\bj , \bi} (v_{\bk}) = \delta_{\bi, \bk} v_{\bj}$ for all $\bk \in I(n,d)$.  Here $\delta_{\bi , \bk}$ denotes the Kronecker function.  Given $\overline{(\bj, \bi )} \in \Orbit(n,d)$, let $\xi_{\overline{(\bj, \bi )}} = \sum_{(\by, \bx ) \in \overline{(\bj, \bi )}} f_{\by, \bx}$.  We abuse notation and write $\xi_{\bj, \bi}$ for $\xi_{\overline{(\bj, \bi )}}$.

It is well known that 
\begin{equation}\label{E:Orbit-Sum-Basis}
\left\{\xi_{\bj, \bi} \mid \overline{(\bj, \bi )} \in \Orbit(n,d) \right\}
\end{equation}
forms a basis for $S(n,d)$. For example, see~\cite[Section 2.3]{Green}.  It will be convenient to use an alternate indexing of this basis using the function $\varphi$ from \cref{SSS:Double-Cosets} (cf.~\cite[Theorem 3.4]{DipperJames}).

Given $\bbeta \in \Lambda(n,d)$, let $V_{n, \bbeta}^{\otimes d}$ denote the span of $\left\{v_{\bi} \mid   \bi \in I(n,d), \wt (\bi ) = \bbeta  \right\}$. Then, 
\[
V_{n}^{\otimes d} = \bigoplus_{\bbeta \in \Lambda(n,d)} V_{n, \bbeta}^{\otimes d}
\] is a decomposition into $\k \fS_{d}$-submodules.  Furthermore,  $V_{n, \bbeta}^{\otimes d} \cong M^{\bbeta}$, where $M^{\bbeta}$ is the permutation module obtained by linearizing the $\fS_{d}$-set $X^{\bbeta}=\fS_{d}/\fS_{\bbeta}$.

Correspondingly, the Schur algebra has the direct sum decomposition 
\begin{equation}\label{E:Finite-Schur-Algebra}
S(n,d) \cong \bigoplus_{\bbeta, \bdelta \in \Lambda(n,d)} \Hom_{\k \fS_{d}} \left( M^{\bbeta}, M^{\bdelta} \right).
\end{equation}
Given $\bbeta, \bdelta \in \Lambda(n,d)$, observe that if $q \in \Mat(n, d, \bdelta, \bbeta)$, then $\xi_{\varphi^{-1}(q)} \in \Hom_{\k \fS_{d}}\left(M^{\bbeta}, M^{\bdelta} \right)$.  For short, write $\xi_{q}$ for $\xi_{\varphi^{-1}(q)}$.  From this and \cref{E:Orbit-Sum-Basis}, it follows that 
\begin{equation}\label{E:Double-Coset-Basis}
\left\{\xi_{q} \mid q \in \Mat(n, d, \bdelta, \bbeta) \right\}
\end{equation}
is a basis for $\Hom_{\k \fS_{d}} \left( M^{\bbeta}, M^{\bdelta} \right)$ and 
\[
\left\{\xi_{q} \mid q \in \Mat(n,d) \right\}
\] is a basis for $S(n,d)$.

\subsection{Classical Structure Constants}\label{SS:Classical-Structure-Constants}  Using this indexing the structure constants for the Schur algebra can be described combinatorially.   Fix $n \in \N$ and $d \in \Z_{\geq 0}$.  Let $q, r, s \in \Mat(n,d)$. Let $A(n, \qrs )$ be as in \cref{SSS:Matrices}.

For $A = (A_{i,j,k})_{i,j,k \in [1,n]} \in \A(n, \qrs)$, define
\begin{equation}\label{E:VA-def}
v_{A}=\prod_{i,k \in [1,n]} \binom{\sum_{j\in [1,n]} A_{i,j,k}}{A_{i,1,k}, \dotsc , A_{i,n,k}} = \prod_{i,k \in [1,n]} \frac{(\sum_{j\in [1,n]} A_{i,j,k})!}{\prod_{j \in [1,n]} (A_{i,j,k}!)}.
\end{equation}

For $\qrs \in \Mat(n,d)$, set
\begin{equation}\label{E:Classical-Structure-Constant-def}
c_{q,r}^s = \sum_{A \in \A(n, \qrs)} v_{A}.
\end{equation}

\begin{remark}
Observe that $\A(n, \qrs)$ is empty unless $q \in \Mat(n, d, \bgamma, \bdelta)$, $r \in \Mat(n, d, \bdelta, \bbeta)$, and $s \in \Mat(n, d, \bgamma, \bbeta)$ for some $\bbeta, \bdelta, \bgamma \in \Lambda(n,d)$. Set $c_{q,r}^s=0$ if $\A(n, \qrs)$ is empty.
\end{remark}

The structure constants for $S(n,d)$ are given by the following result found in~\cite[Proposition 2.3]{SantanaYudin} or~\cite[Proposition 6]{Ryba}. 

\begin{proposition}\label{T:classical-product} Fix $n \in \N$, $d \in \Z_{\geq 0}$, and $\bbeta, \bdelta, \bgamma \in \Lambda(n,d)$. For $q \in \Mat(n, d, \bgamma, \bdelta)$ and $r \in \Mat(n, d, \bdelta, \bbeta)$, the product in $S(n,d)$ is given by 
\begin{equation*}
\xi_{q} \xi_{r} = \sum_{s \in \Mat(n, d, \bgamma, \bbeta)} c_{q,r}^s \xi_s. 
\end{equation*}
\end{proposition}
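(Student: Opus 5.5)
We compute the product directly on basis vectors: expand $\xi_{q}\xi_{r}$ applied to a basis vector $v_{\bi}$ and count. Fix $\bi \in I(n,d)$ with $\wt(\bi)=\bbeta$. Since $\xi_{r} = \sum_{(\by,\bx)\in\varphi^{-1}(r)} f_{\by,\bx}$ and $f_{\by,\bx}(v_{\bi})=\delta_{\bx,\bi}v_{\by}$, we get
\[
\xi_{r}(v_{\bi}) = \sum_{\bj:\ \varphi(\overline{(\bj,\bi)})=r} v_{\bj},
\]
where $\bj$ ranges over tuples of weight $\bdelta$ with $|[\bj]_{y}\cap[\bi]_{z}| = r_{y,z}$ for all $y,z$. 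Applying $\xi_{q}$ gives
\[
\xi_{q}\xi_{r}(v_{\bi}) = \sum_{\bk} N(\bk,\bi)\, v_{\bk},
\]
where $N(\bk,\bi)$ is the number of $\bj$ with $\varphi(\overline{(\bk,\bj)})=q$ and $\varphi(\overline{(\bj,\bi)})=r$.

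The function $N(\bk,\bi)$ is constant on $\fS_{d}$-orbits of pairs, because place permutation of all three tuples simultaneously preserves both conditions. Hence $\xi_{q}\xi_{r} = \sum_{s} N_{s}\,\xi_{s}$, where $N_{s}=N(\bk,\bi)$ for any representative $(\bk,\bi)$ of $\varphi^{-1}(s)$. This already gives the expansion in the basis \cref{E:Double-Coset-Basis}, and $s$ must lie in $\Mat(n,d,\bgamma,\bbeta)$ by weight considerations. It therefore remains to show $N_{s}=c^{s}_{q,r}$.

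To count, fix $(\bk,\bi)$ with $|[\bk]_{x}\cap[\bi]_{z}| = s_{x,z}$. A tuple $\bj$ is determined by choosing, inside each block $B_{x,z}=[\bk]_{x}\cap[\bi]_{z}$ of size $s_{x,z}$, the positions where $\bj$ takes each value $y$. Let $A_{x,y,z} = |[\bk]_{x}\cap[\bj]_{y}\cap[\bi]_{z}|$. Then:
\begin{itemize}
\item the conditions on $\bj$ translate exactly into $\sum_{z}A_{x,y,z}=q_{x,y}$ and $\sum_{x}A_{x,y,z}=r_{y,z}$;
\item automatically $\sum_{y}A_{x,y,z}=s_{x,z}$;
\end{itemize}
so $A\in\A(n,\qrs)$. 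Conversely, for a given array $A$, the number of $\bj$ realizing it is the product over $(x,z)$ of the multinomial coefficients $\binom{s_{x,z}}{A_{x,1,z},\dots,A_{x,n,z}}$, which is exactly $v_{A}$ from \cref{E:VA-def}. Summing over $A$ gives $N_{s}=c^{s}_{q,r}$.

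The only delicate point is the index bookkeeping: matching the roles of $i,j,k$ in the definition of $\A(n,\qrs)$ to the triple $(\bk,\bj,\bi)$, and checking the orientation convention of $\varphi$ (rows indexed by the left tuple). With $A_{i,j,k}$ read as (value in $\bk$, value in $\bj$, value in $\bi$), the three marginal conditions line up with $q$, $r$, $s$ as required.
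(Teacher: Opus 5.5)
Your proof is correct; the only bookkeeping to confirm is that the $q$- and $r$-marginal conditions already force $\wt(\bj)=\bdelta$, and they do.

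The paper gives no proof of this proposition; it cites Santana--Yudin and Ryba. Your count is, however, exactly the finite, counting-measure version of the paper's own proof of \cref{T:product-formula}:
\begin{enumerate}
\item Your set of admissible intermediate tuples $\bj$ plays the role of $X(\qrs)$.
\item Sorting the $\bj$ by the array $A_{x,y,z}=|[\bk]_{x}\cap[\bj]_{y}\cap[\bi]_{z}|$ is the orbit decomposition of \cref{L:orbit-bijection}. The fibres over a given $A$ are precisely the orbits of the place-permutation stabilizer $\prod_{x,z}\fS_{B_{x,z}}$ of the pair $(\bk,\bi)$, which is the analogue of $U(\bgamma,\sigma,\bbeta)$.
\item Your multinomial $\prod_{x,z}\binom{s_{x,z}}{A_{x,1,z},\dotsc,A_{x,n,z}}=v_{A}$ is the cardinality of the orbit $U/V$. \cref{L:measure-computation} computes the $\mu_{t}$-measure of that same orbit.
\end{enumerate}

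Each route buys something different. Yours works directly with the basis $\{v_{\bi}\}$ of $V_{n}^{\otimes d}$. It is elementary and self-contained, and it never needs to identify $S(n,d)$ with a convolution algebra of functions on $\fS_{d}$-sets.

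The paper's route is what is needed when $t\notin\Z_{\geq 0}$, where there is no finite set to count. There the orbit measure must come from the multiplicativity axiom of the measure together with polynomial interpolation. This produces the polynomial $\VV_{A}(T)$, whose value at $T=d$ is compared with your $v_{A}$ in \cref{T:SSmodIdeal-is-isomorphic-to-Snd}. The present proposition is exactly the input that this comparison relies on.
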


\subsection{Green's Codeterminant basis for  \texorpdfstring{$S(n,d)$}{S(n,d)}}\label{SS:Finite-Codeterminant-Basis}

Green gave an alternate basis for $S(n,d)$ in~\cite{Green-codets} called the codeterminant basis, which we now describe.  Given $\blambda \in \Lambda^{+}(n,d)$ and $\bbeta, \bdelta  \in \Lambda(n,d)$, set 
\[
X(\blambda, \bbeta) = \left\{ \xi_{r} \mid  \text{$r \in \Mat(n, d, \blambda, \bbeta)$ and the filling of $Y^{\blambda}$ by $r$ is semistandard} \right\}
\] and
\[
Y(\bdelta, \blambda) = \left\{ \xi_{q} \mid  \text{$q \in \Mat(n, d, \bdelta, \blambda)$ and the filling of $Y^{\blambda}$ by $q^{*}$ is semistandard} \right\}. 
\]   The set 
\[
\CB (n,d, \bdelta, \bbeta ):=\left\{\xi_{q}\xi_{r} \left| (\xi_{q}, \xi_{r}) \in \bigcup_{\blambda \in \Lambda^{+}(n,d)} Y(\bdelta, \blambda) \times X(\blambda, \bbeta) \right. \right\}
\] is Green's codeterminant basis for $\Hom_{\k \fS_{d}} \left( M^{\bbeta}, M^{\bdelta} \right)$. The set 
\[
\CB(n,d):=\bigcup_{\bbeta,\bdelta\in\Lambda(n,d)}\CB(n, d, \bdelta,\bbeta)
\] is Green's codeterminant basis for $S(n,d)$.  This basis makes the classical Schur algebra a based quasi-hereditary algebra in the sense of~\cite{KleshchevMuth}.

\begin{remark}\label{R:Classical-Schur-Algebra-when-n-is-infinity}  The results of this section remain true when $n=\infty$ if one takes, as we do, the right hand side of \cref{E:Finite-Schur-Algebra} as the definition of $S(\infty, d)$.  
\end{remark}

\section{The Interpolating Schur Algebra}\label{S:Interpolating-Schur-Algebra}
We now define the main object of study in this paper.

\subsection{The Infinite Symmetric Group and Double Cosets}\label{SS:Infinite-Symmetric-Group}
Let $\fS_{\infty}$ be the group of bijections from $\Omega :=\N$ to itself.

Given $\bbeta = (\beta_{1}, \beta_{2}, \dotsc , \beta_{n}) \in \Lambda(n)_{t}$ there is an associated Young subgroup $\fS_{\bbeta}  \leq \fS_{\infty}$ isomorphic to
\[
 \fS_{\beta_{2}} \times \dots \times \fS_{\beta_{n}} \times \fS_{\infty}.
\] There is also the corresponding $\fS_{\infty}$-set of left cosets, $\MM^{\bbeta}:=\fS_{\infty} /\fS_{\bbeta}$.

Given $\bbeta, \bdelta \in \Lambda(n)_{t}$, the set of double cosets $\fS_{\bdelta}\backslash \fS_{\infty}/ \fS_{\bbeta}$ is in bijection with the set $\MMat_{n}(\bdelta, \bbeta)_{t}$ via the map 
\begin{equation}\label{E:infinite-q-to-matrix-bijection}
q \mapsto \fS_{\bdelta}\sigma \fS_{\bbeta},
\end{equation}
where $\sigma \in \fS_{\infty}$ is any permutation with the property that, for all $i,j \in [1,n]$ except $(i,j)=(1,1)$, 
\[
q_{i,j} = \left|\{ \bdelta \}_{i} \cap \sigma\left( \{\bbeta \}_{j}\right)\right|.
\]

Note that the set of $\fS_{\infty}$-orbits in $\fS_{\infty}/\fS_{\bdelta}\times \fS_{\infty}/\fS_{\bbeta}$ is in bijection with the set of double cosets $\fS_{\bdelta}\backslash \fS_{\infty}/\fS_{\bbeta}$. The map is given on orbit representatives by  
\begin{equation}\label{E:Orb-Lemma}
(x\fS_{\bdelta},y\fS_{\bbeta}) \mapsto \fS_{\bdelta}x^{-1}y\fS_{\bbeta}.
\end{equation}  We will freely identify these sets via this bijection.  Note that we can and will assume orbit representatives have the form $(\fS_{\bdelta},z\fS_{\bbeta})$ for some $z \in \fS_{\infty}$.  The bijection then becomes $(\fS_{\bdelta},z\fS_{\bbeta}) \mapsto \fS_{\bdelta}z\fS_{\bbeta}$.

\subsection{ \texorpdfstring{$\fS_{\infty}$}{Sinfty} as an Oligomorphic Group}\label{SS:Sinfty-as-a-Oligomorphic-Group}

 The action of $\fS_{\infty}$ on $\Omega$ is faithful and the diagonal action of $\fS_{\infty}$ on $\Omega^{r}$ has finitely many orbits for all $r \geq 1$.  This makes the pair $(\fS_{\infty}, \Omega)$ an oligomorphic group.  We refer the reader to~\cite{HarmanSnowden,Cameron} for details.  Here we only recall enough to make sense of the concepts we use.

There is a topology on $\fS_{\infty}$ given by declaring that the subgroups of $\fS_{\infty}$ which fix a finite subset of $\Omega$ are a base of open neighborhoods of the identity.  The action of $\fS_{\infty}$ on a set $X$ is \emph{smooth} if every point stabilizer is an open subgroup of $\fS_{\infty}$.  In this paper, all $\fS_{\infty}$-sets are assumed to be smooth.  An $\fS_{\infty}$-set is called \emph{finitary} if the action of $\fS_{\infty}$ has finitely many orbits.  Note that if $X$ and $Y$ are finitary $\fS_{\infty}$-sets, then $X \times Y$ is a finitary $\fS_{\infty}$-set under the diagonal action (see~\cite[Proposition 2.8(b)]{HarmanSnowden}).

More generally, if there exists an open subgroup $U \subseteq \fS_{\infty}$ acting smoothly on some set $X$ (which may or may not be a $\fS_{\infty}$-set), then we call $X$ a $\wideSinfinity$-set. If the action of $U$ has finitely many orbits, then we say it is a finitary $\wideSinfinity$-set.  Being a finitary $\wideSinfinity$-set does not depend in an important way on the choice of open subgroup $U$. See~\cite[Section I.2]{HarmanSnowden}.  Note that, because Young subgroups are open, $\fS_{\infty}/\fS_{\bbeta}$ is a finitary $\fS_{\infty}$-set for every $\bbeta \in \Lambda(n)_{t}$.  Write $\mathcal{M}^{\bbeta}$ for the linearization of $\MM^{\bbeta}=\fS_{\infty}/\fS_{\bbeta}$.

Recall that $\k$ denotes a characteristic zero field.  Harman--Snowden define the notion of a \emph{$\k$-valued measure} associated to an oligomorphic group such as $\fS_{\infty}$. This is a $\k$-valued function on the collection of finitary $\wideSinfinity$-sets subject to various axioms.  See~\cite[Section I.3]{HarmanSnowden} for specifics.  Important for this paper is the fact that for each $t \in \k $ there exists a unique measure $\mu_t$ satisfying $\mu_t(\Omega) = t$.  A description of $\mu_{t}$ is given as follows.

\begin{example}\label{Ex:Measure-Computation} For $p \in \Z_{\geq 0}$, let $\fS (p)$ be the subgroup of $\fS_{\infty}$ which fixes $[1,p]$ pointwise.  As discussed in~\cite[Example 3.4]{HarmanSnowden}, for a finitary $\wideSinfinity$-set $X$ there is a polynomial $p_{X}(T) \in \Q [T]$ such that 
\[
p_{X}(p) = \left| X^{\fS (p)}  \right|
\] for all sufficiently large $p$.  The measure $\mu_{t}$ satisfies $\mu_{t}(X) = p_{X}(t)$ for all $t \in \k$.

With this description it is straightforward to see that
\[
\mu_{t}\left( \fS_{\infty}/\fS_{\bbeta} \right) = \binom{t}{\beta_{1}, \beta_{2}, \dotsc , \beta_{n}}
\]  for all $\bbeta = (\beta_{1}, \dotsc , \beta_{n}) \in \Lambda(n)_{t}$.
\end{example}

\subsection{The Category  \texorpdfstring{$\Perm (\fS_{\infty};\mu_{t})$}{Perm(Sinfty; mut)}}\label{S:Perm-and-SPerm}

Let $X$ be a finitary $\wideSinfinity$-set, where $U$ is an open subgroup of $\fS_{\infty}$ which acts smoothly on $X$ with finitely many orbits. If $f: X \to \k$ is a function that is constant on $U$-orbits, then it is called a \emph{Schwartz function on $X$}.  Note that  Schwartz functions are defined for non-finitary sets in \cite{HarmanSnowden}, but that generality is not needed for this paper. If $f$ is a Schwartz function on $X$ and $c_{1}, \dotsc , c_{z} \in \k$ are the distinct values of $f$, then the integral of $f$ on $X$ with respect to the measure $\mu_{t}$ is defined to be
\begin{equation*}
\int_X f(x) \, dx = \sum_{i=1}^{z} c_i \mu_{t}(f^{-1}(c_i)).
\end{equation*}

Given finitary $\wideSinfinity$-sets $X$, $Y$, and $Z$, and Schwartz functions $f: Y \times X \to \k $ and $g: Z \times Y \to \k $, then $g \circ f : Z \times X \to \k  $ is the Schwartz function given by their convolution:
\begin{equation}\label{E:Composition-of-Schwartz-Functions}
    (g \circ f)(z, x) = \int_Y g(z, y) f(y, x) \, dy.
\end{equation}

The category $\Perm(\fS_{\infty}; \mu_t)$ is the category of all finitary $\fS_{\infty}$-sets with $\Hom_{\Perm(\fS_{\infty}; \mu_t)}(X, Y)$ being the $\k$-vector space of Schwartz functions on the finitary $\fS_{\infty}$-set $Y \times X$. Composition of morphisms in $\Perm(\fS_{\infty}; \mu_t)$ is the composition of Schwartz functions given in \cref{E:Composition-of-Schwartz-Functions}.

\subsection{The Interpolating Schur Algebra}\label{SS:Interpolating-Schur-Algebra}

\begin{definition}\label{D:Interpolating-Schur-Algebra} Fix $t \in \k $ and $n \in \N \cup \left\{\infty \right\}$. Let $\mu_t$ be the $\k$-valued measure on finitary $\wideSinfinity$-sets which is described in \cref{Ex:Measure-Computation}.  The Schur algebra associated to this data is the path algebra for the full subcategory of $\Perm(\fS_{\infty} ; \mu_{t})$ consisting of objects $\MM^{\bbeta}$ for $\bbeta \in \Lambda(n)_{t}$:
\begin{equation}\label{E:Interpolating-Schur-Algebra-Definition}
\SS_{\k }(n,t) = \bigoplus_{\bbeta, \bdelta \in \Lambda(n)_{t}} \Hom_{\Perm(\fS_{\infty}; \mu_{t})}\left( \MM^{\bbeta}, \MM^{\bdelta}\right).
\end{equation}
\end{definition} \noindent  We frequently leave the field of definition implicit and write $\SS(n,t)$ for $\SS_{\k }(n,t)$.

There is an obvious basis for $\SS(n,t)$.  Namely, given $\bbeta, \bdelta \in \Lambda(n)_{t}$, consider $\fS_{\infty}/\fS_{\bdelta} \times \fS_{\infty}/\fS_{\bbeta}$.  Since this is a finitary $\fS_{\infty}$-set, the space of Schwartz functions has a $\k$-basis given by the indicator functions for the orbits.  By \cref{SS:Infinite-Symmetric-Group} these are indexed by $\MMat_{n}(\bdelta, \bbeta)_{t}$.  Let 
\[
\xi_{q} \in \Hom_{\Perm(\fS_{\infty}; \mu_t)}\left(\MM^{\bbeta}, \MM^{\bdelta} \right)
\]
denote the indicator function which is $1$ on the orbit labelled by $q \in \MMat_{n}(\bdelta, \bbeta)_{t}$ and $0$ on all other orbits.  Then the following result holds.  

\begin{proposition}\label{P:Snt-Basis-Theorem}  Fix $t \in \k $ and $n \in \N \cup \left\{\infty \right\}$. For each $\bbeta, \bdelta \in \Lambda(n)_{t}$,
\[
\left\{\xi_{q} \mid q \in \MMat_{n}(\bdelta, \bbeta)_{t} \right\}
\] is a $\k$-basis for $\Hom_{\Perm(\fS_{\infty};  \mu_{t})}\left( \MM^{\bbeta}, \MM^{\bdelta}\right)$ and each such summand given in \cref{E:Interpolating-Schur-Algebra-Definition}  is finite-dimensional as a $\k$-vector space.

Moreover, the set 
\[
\StdBasis(n)_{t} :=\left\{\xi_{q} \mid q \in \MMat_{n}(t) \right\}
\] is a $\k$-basis for $\SS(n,t)$.  
\end{proposition}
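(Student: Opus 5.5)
The proof has three steps: identify the space of Schwartz functions, identify its orbits, and then pass to the direct sum.

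By definition, $\Hom_{\Perm(\fS_{\infty};\mu_{t})}(\MM^{\bbeta},\MM^{\bdelta})$ is the space of Schwartz functions on the finitary $\fS_{\infty}$-set $\MM^{\bdelta}\times\MM^{\bbeta}$. This product is finitary by \cite[Proposition 2.8(b)]{HarmanSnowden}, since each factor is finitary (Young subgroups are open). We may therefore take $U=\fS_{\infty}$ in the definition of a Schwartz function; I will check that choosing a smaller open subgroup does not enlarge the space, see below. With $U=\fS_{\infty}$, a Schwartz function is just a $\k$-valued function constant on the finitely many $\fS_{\infty}$-orbits. Such a function is $\sum_{\mathcal O} c_{\mathcal O}\mathbf{1}_{\mathcal O}$, where $c_{\mathcal O}$ is its value on $\mathcal O$, and this expression is unique. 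The indicator functions are linearly independent because the orbits are disjoint and nonempty. Hence the orbit indicators form a basis and the Hom space has dimension equal to the number of orbits.

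Next, the orbits are indexed as claimed. Under \cref{E:Orb-Lemma}, orbits of $\MM^{\bdelta}\times\MM^{\bbeta}$ correspond to double cosets $\fS_{\bdelta}\backslash\fS_{\infty}/\fS_{\bbeta}$. By \cref{E:infinite-q-to-matrix-bijection}, these correspond to $\MMat_{n}(\bdelta,\bbeta)_{t}$. The key checks are as follows.
\begin{itemize}
\item The intersection counts $|\{\bdelta\}_{i}\cap\sigma(\{\bbeta\}_{j})|$ for $(i,j)\neq(1,1)$ are finite, because $\{\bdelta\}_i$ (for $i\geq 2$) or $\{\bbeta\}_j$ (for $j\geq 2$) is finite.
\item These counts are invariant under left and right multiplication by Young subgroups.
\item They satisfy the row and column sum conditions once we set $q_{1,1}=t-\sum_{(i,j)\neq(1,1)}q_{i,j}\in t-\Z$.
\item Conversely, two permutations with the same counts lie in the same double coset. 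One builds the required elements of $\fS_{\bdelta}$ and $\fS_{\bbeta}$ blockwise, matching bijections between the finite pieces and between the infinite piece $\{\bdelta\}_1\cap\sigma(\{\bbeta\}_1)$ and its counterpart.
\item Every $q\in\MMat_{n}(\bdelta,\bbeta)_{t}$ is realized by some $\sigma$, constructed directly since all blocks other than $(1,1)$ are finite.
\end{itemize}
Finiteness of $\MMat_{n}(\bdelta,\bbeta)_{t}$ holds because every entry other than $q_{1,1}$ is a nonnegative integer bounded by $\delta_i$ or $\beta_j$ with $i$ or $j\geq2$. This gives both the basis statement and the finite-dimensionality.

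For the whole algebra, \cref{E:Interpolating-Schur-Algebra-Definition} is a direct sum, so the union of the bases of the summands is a basis. Elements $\xi_q$ from different summands are distinct because each $q$ determines its pair $(\bdelta,\bbeta)$ via its row and column sums. This yields $\StdBasis(n)_{t}$. When $n=\infty$, all but finitely many parts are zero, so the same arguments apply verbatim.

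The main obstacle is the subtlety in the definition of Schwartz function, which allows functions constant only on orbits of some smaller open subgroup $U$. On $\fS_{\infty}$-sets, however, morphisms in $\Perm$ are required to be $\fS_{\infty}$-invariant, as in \cite{HarmanSnowden}, so I would invoke that convention. With it, the real content is the double coset bijection, which reduces to careful bookkeeping of the blocks.
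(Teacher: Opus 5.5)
Your proposal is correct and follows the same route as the paper. The paper identifies the Hom space with the Schwartz functions on the finitary set $\MM^{\bdelta}\times\MM^{\bbeta}$, takes the orbit indicator functions as a basis, indexes the orbits by $\MMat_{n}(\bdelta,\bbeta)_{t}$ through double cosets (a bijection it only asserts, which you verify in more detail), and finishes with finiteness and the direct sum. One small correction: your opening promise to ``check'' that a smaller open subgroup $U$ does not enlarge the space cannot succeed, since functions constant on orbits of a smaller $U$ do form a strictly larger space; the right resolution is the convention you end with, that morphisms in $\Perm(\fS_{\infty};\mu_{t})$ are $\fS_{\infty}$-invariant, which is how the paper implicitly reads its definition.
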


\subsection{Structure Constants}\label{SS:Structure-Constants-for-Snt}

We next derive a combinatorial description of the structure constants for $\SS(n,t)$.

Throughout this section let $q = (q_{i,j}) \in \MMat_{n}(\bgamma, \bdelta)_{t}$ and $r = (r_{i,j}) \in \MMat_{n}(\bdelta, \bbeta)_{t}$. Consider
\begin{equation}\label{E:Snt-structure-constants}
    \xi_{q} \circ \xi_{r} = \sum_{s \in \MMat_{n}(\bgamma, \bbeta)_{t}} C_{q,r}^s(t) \xi_s,
\end{equation} where $C_{q,r}^{s}(t) \in \k$.

For $s = (s_{i,j}) \in \MMat_{n}(\bgamma, \bbeta)_{t}$, $\xi_s$ is the indicator function for some double coset, say $\fS_{\bgamma}\sigma\fS_{\bbeta}$.   It follows that
\begin{equation}\label{E:C-as-Integral}
C_{q,r}^s (t) = (\xi_{q} \circ \xi_{r})(\fS_{\bgamma} \sigma\fS_{\bbeta}) = \int_{\fS_{\infty}/\fS_{\bdelta}} \xi_{q}(\fS_{\bgamma}, \theta\fS_{\bdelta}) \xi_{r}(\theta\fS_{\bdelta}, \sigma\fS_{\bbeta}) \, d\left(\theta\fS_{\bdelta} \right).
\end{equation}
However, the function
\[
\xi_{q}(\fS_{\bgamma},-)\xi_{r}(-,\sigma\fS_{\bbeta}): \fS_{\infty}/\fS_{\bdelta} \to \k
\] only takes on values $0$ and $1$. Therefore, the integral in \cref{E:C-as-Integral} is simply the measure of the set of cosets $\theta\fS_{\bdelta}$ such that the double coset $\fS_{\bgamma}\theta\fS_{\bdelta}$ has indicator function  $\xi_{q}$ and, simultaneously,  the double coset $\fS_{\bdelta}\theta^{-1}\sigma\fS_{\bbeta}$ has indicator function  $\xi_{r}$.
We let $X(\qrs)$ denote this set of cosets.  Then, 
\begin{equation}\label{E:X(qrs)}
C_{q, r}^s(t)=\mu_t(X(\qrs)).
\end{equation}
To compute the measure of $X(\qrs)$ requires some preparatory lemmas.

It is straightforward to verify the following result.

\begin{lemma}\label{L:U-action}
The subgroup $U(\bgamma, \sigma, \bbeta) = \fS_{\bgamma} \cap \sigma\fS_{\bbeta}\sigma^{-1}$ acts on $X(\qrs)$ by left translation. The stabilizer of $\theta\fS_{\bdelta} \in X(\qrs)$ is equal to $V(\bgamma, \sigma, \bbeta, \theta, \bdelta)=  U(\bgamma, \sigma, \bbeta) \cap \theta\fS_{\bdelta}\theta^{-1}$.
\end{lemma}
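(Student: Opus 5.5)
The plan is to verify directly from the definition of $X(\qrs)$ that left translation by $u \in U(\bgamma,\sigma,\bbeta)$ preserves the two double-coset conditions, and then to compute the stabilizer from the usual description of stabilizers in a coset space.

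\emph{Invariance.} Recall that $\theta\fS_{\bdelta} \in X(\qrs)$ means two things. First, the double coset $\fS_{\bgamma}\theta\fS_{\bdelta}$ is the one labelled by $q$. Second, the double coset $\fS_{\bdelta}\theta^{-1}\sigma\fS_{\bbeta}$ is the one labelled by $r$. Let $u \in U(\bgamma,\sigma,\bbeta)$.

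1. The action $u\cdot\theta\fS_{\bdelta} = u\theta\fS_{\bdelta}$ is well defined, since it is just the restriction of the left $\fS_{\infty}$-action on $\fS_{\infty}/\fS_{\bdelta}$.
2. Because $u \in \fS_{\bgamma}$, we have $\fS_{\bgamma}u\theta\fS_{\bdelta} = \fS_{\bgamma}\theta\fS_{\bdelta}$. So the $q$-condition is preserved.
3. Because $u \in \sigma\fS_{\bbeta}\sigma^{-1}$, we may write $u^{-1} = \sigma b\sigma^{-1}$ with $b \in \fS_{\bbeta}$. Then
\[
(u\theta)^{-1}\sigma = \theta^{-1}u^{-1}\sigma = \theta^{-1}\sigma b,
\]
so $\fS_{\bdelta}(u\theta)^{-1}\sigma\fS_{\bbeta} = \fS_{\bdelta}\theta^{-1}\sigma\fS_{\bbeta}$. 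So the $r$-condition is preserved.
4. Both conditions depend only on the coset $\theta\fS_{\bdelta}$, not on the representative $\theta$. Hence $X(\qrs)$ is stable under $U(\bgamma,\sigma,\bbeta)$.

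\emph{Stabilizer.} For $u \in U(\bgamma,\sigma,\bbeta)$, we have $u\theta\fS_{\bdelta} = \theta\fS_{\bdelta}$ if and only if $\theta^{-1}u\theta \in \fS_{\bdelta}$, that is, if and only if $u \in \theta\fS_{\bdelta}\theta^{-1}$. Therefore the stabilizer is $U(\bgamma,\sigma,\bbeta)\cap\theta\fS_{\bdelta}\theta^{-1} = V(\bgamma,\sigma,\bbeta,\theta,\bdelta)$.

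Nothing here is a real obstacle. The only point needing care is checking that both membership conditions are independent of the chosen coset representative, which is immediate from the double-coset formulation.
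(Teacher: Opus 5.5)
Your proof is correct and is essentially the paper's approach: the paper only states that the lemma is straightforward to verify, and your direct check is exactly that omitted verification. You show that left translation by $u \in \fS_{\bgamma} \cap \sigma\fS_{\bbeta}\sigma^{-1}$ preserves both double-coset conditions defining $X(q,r,s)$. You then compute the stabilizer from $u\theta\fS_{\bdelta}=\theta\fS_{\bdelta} \iff u \in \theta\fS_{\bdelta}\theta^{-1}$.
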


\begin{lemma}\label{L:orbit-bijection}
The $U(\bgamma, \sigma, \bbeta)$-orbits on $X(\qrs)$ are in bijection with $\AA_{n}(\qrs)_{t}$.
\end{lemma}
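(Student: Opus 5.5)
We define an explicit map from $X(\qrs)$ to $\AA_{n}(\qrs)_{t}$, show it is constant exactly on $U(\bgamma,\sigma,\bbeta)$-orbits, and show every array is attained. Throughout, fix $\sigma$ representing the double coset labelled by $s$; replacing $\sigma$ by another representative conjugates everything compatibly, so this choice is harmless.

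\emph{The map.} Given $\theta\fS_{\bdelta}\in X(\qrs)$, consider the three set partitions of $\Omega$
\[
\{\bgamma\}_{i}, \qquad \theta(\{\bdelta\}_{j}), \qquad \sigma(\{\bbeta\}_{k}),
\]
where $i,j,k \in [1,n]$. The middle partition depends only on the coset $\theta\fS_{\bdelta}$, since $\fS_{\bdelta}$ stabilizes each block $\{\bdelta\}_{j}$. Define
\[
A_{i,j,k}=\bigl|\{\bgamma\}_{i}\cap\theta(\{\bdelta\}_{j})\cap\sigma(\{\bbeta\}_{k})\bigr|
\]
for $(i,j,k)\neq(1,1,1)$. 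Every such triple intersection is finite, because each block other than the three blocks with index $1$ is finite. Set
\[
A_{1,1,1}=t-\sum_{(i,j,k)\neq(1,1,1)}A_{i,j,k}\in t-\Z.
\]
We then check the marginal conditions.
\begin{itemize}
\item The condition $\theta\fS_{\bdelta}\in X(\qrs)$ says that $\fS_{\bgamma}\theta\fS_{\bdelta}$ is labelled by $q$. By \cref{E:infinite-q-to-matrix-bijection}, this means $\bigl|\{\bgamma\}_{i}\cap\theta(\{\bdelta\}_{j})\bigr|=q_{i,j}$ for $(i,j)\neq(1,1)$. Summing over $k$ gives $\sum_{k}A_{i,j,k}=q_{i,j}$ for $(i,j)\neq(1,1)$.
\item Likewise, the double coset $\fS_{\bdelta}\theta^{-1}\sigma\fS_{\bbeta}$ is labelled by $r$. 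Applying $\theta$, this gives $\bigl|\theta(\{\bdelta\}_{j})\cap\sigma(\{\bbeta\}_{k})\bigr|=r_{j,k}$, hence the $r$-marginals.
\item Since $\sigma$ represents $s$, we get $\bigl|\{\bgamma\}_{i}\cap\sigma(\{\bbeta\}_{k})\bigr|=s_{i,k}$, hence the $s$-marginals.
\item The marginals involving index $(1,1)$ follow from the rest, because the entries of $q$, $r$, $s$ and $A$ all sum to $t$.
\end{itemize}
So $A\in\AA_{n}(\qrs)_{t}$.

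\emph{Invariance.} Let $u\in U(\bgamma,\sigma,\bbeta)$. Then $u$ preserves each block $\{\bgamma\}_{i}$ and each block $\sigma(\{\bbeta\}_{k})$, and it sends $\theta(\{\bdelta\}_{j})$ to $u\theta(\{\bdelta\}_{j})$. Hence the cardinalities $A_{i,j,k}$ are unchanged, and the map descends to orbits.

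\emph{Injectivity on orbits.} Suppose $\theta,\theta'$ give the same array. Then for every $(i,k)$, the sets $C_{i,k}:=\{\bgamma\}_{i}\cap\sigma(\{\bbeta\}_{k})$ are partitioned by the $\theta$-blocks and by the $\theta'$-blocks with identical finite part sizes. Only the cell with $(i,j,k)=(1,1,1)$ can be infinite, and in $C_{1,1}$ it is cofinite. So on each $C_{i,k}$ we can choose a bijection sending $C_{i,k}\cap\theta(\{\bdelta\}_{j})$ onto $C_{i,k}\cap\theta'(\{\bdelta\}_{j})$ for each $j$. For $C_{1,1}$, the cell $j=1$ is infinite in both partitions, with finite complements of equal total size, so such a bijection still exists.

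Gluing these bijections gives $u\in\fS_{\infty}$ preserving every $C_{i,k}$. Such a $u$ lies in $\fS_{\bgamma}\cap\sigma\fS_{\bbeta}\sigma^{-1}=U(\bgamma,\sigma,\bbeta)$. Moreover $u\theta(\{\bdelta\}_{j})=\theta'(\{\bdelta\}_{j})$ for all $j$. The stabilizer of the ordered set partition $(\{\bdelta\}_{j})_{j}$ is exactly $\fS_{\bdelta}$, so $u\theta\fS_{\bdelta}=\theta'\fS_{\bdelta}$.

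\emph{Surjectivity.} Given $A\in\AA_{n}(\qrs)_{t}$, note that $|C_{i,k}|=s_{i,k}$ for $(i,k)\neq(1,1)$, and $C_{1,1}$ is infinite. Partition each $C_{i,k}$ into pieces $P_{i,j,k}$ with $|P_{i,j,k}|=A_{i,j,k}$, taking $P_{1,1,1}$ to be the infinite cofinite remainder of $C_{1,1}$. This is possible because $\sum_{j}A_{i,j,k}=s_{i,k}$.

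Set $B_{j}=\bigcup_{i,k}P_{i,j,k}$. For $j\geq 2$, the $q$-marginal gives
\[
|B_{j}|=\sum_{i,k}A_{i,j,k}=\sum_{i}q_{i,j}=\delta_{j},
\]
and $B_{1}$ is infinite with infinite complement-compatible structure. Hence there is $\theta\in\fS_{\infty}$ with $\theta(\{\bdelta\}_{j})=B_{j}$ for all $j$. Its array is $A$ by construction, and the $q$- and $r$-marginals then force $\theta\fS_{\bdelta}\in X(\qrs)$.

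The main obstacle is bookkeeping the single infinite cell. Every equality of cardinalities must be taken only over finite cells, with the $(1,1,1)$ entry recovered formally from the total $t$; the bijections used to construct $u$ and $\theta$ must handle the cofinite pieces separately from the finite ones.
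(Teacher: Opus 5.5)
Your proof is correct and follows essentially the same route as the paper: the same triple-intersection map with $A_{1,1,1}$ recovered from $t$, invariance under $U(\bgamma,\sigma,\bbeta)$, injectivity via a permutation $u$ matching the two triple-intersection partitions (your cell-by-cell construction on each $\{\bgamma\}_i\cap\sigma\{\bbeta\}_k$ produces the paper's $u$), and surjectivity by subdividing each $\{\bgamma\}_i\cap\sigma\{\bbeta\}_k$ and choosing $\theta$ blockwise. You are slightly more explicit than the paper in two places: you check the marginal conditions, and you verify that the constructed $\theta\fS_{\bdelta}$ actually lies in $X(\qrs)$.
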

\begin{proof}
  For short, write $U=U(\bgamma, \sigma, \bbeta)$. Define a map
    \begin{align*}
        X(\qrs)&\to\AA_{n}(\qrs)_{t}\\
        \theta\fS_{\bdelta}&\mapsto (A_{i,j,k})_{i,j,k \in [1,n]},
    \end{align*}
    where 
    
\begin{equation*}
A_{i,j,k}=\begin{cases}
\left| \{\bgamma\}_i\cap\theta\{\bdelta\}_j\cap\sigma\{\bbeta\}_k \right|, & \text{if }(i,j,k)\neq(1,1,1);\\
    t-\sum_{(i,j,k) \in [1,n]^{3} \backslash \{(1,1,1) \}}A_{i,j,k}, & \text{for }(i,j,k)=(1,1,1).
\end{cases}
\end{equation*}
That $A=(A_{i,j,k})$ lies in $\AA_{n}(\qrs)_{t}$ is a straightforward verification. We next claim the function is constant on $U$-orbits. Say $u\in U$ satisfies $u\theta\fS_{\bdelta}=\pi\fS_{\bdelta}.$ Then $u\theta =\pi x$ for some $x\in\fS_{\bdelta}$. Using that $u$ is an element of $U=\fS_{\bgamma}\cap\sigma\fS_{\bbeta}\sigma^{-1}$ and applying $u$ to the set $\{\bgamma\}_i\cap\theta\{\bdelta\}_j\cap\sigma\{\bbeta\}_k$ yields
\[
\left| \{\bgamma\}_i\cap\theta\{\bdelta\}_j\cap\sigma\{\bbeta\}_k \right|=\left| \{\bgamma\}_i\cap \pi\{\bdelta\}_j\cap \sigma\{\bbeta\}_k \right|
 \]
        for all $(i,j,k)\neq(1,1,1)$. Thus the array defined using $\theta\fS_{\bdelta}$ is the same as the array defined using $\pi\fS_{\bdelta}$.  Now consider the induced function from the set of $U$-orbits to $\AA_{n}(\qrs)_{t}$.
        
We first show the map is injective. Suppose $\theta\fS_{\bdelta}$ and $\pi\fS_{\bdelta}$ map to the same array.  Then, for all $i,j,k \in [1,n]$, 
\begin{equation*}
\left|  \{\bgamma\}_i\cap\pi\{\bdelta\}_j\cap\sigma\{\bbeta\}_k\right|=\left| \{\bgamma\}_i\cap\theta\{\bdelta\}_j\cap\sigma\{\bbeta\}_k\right|.
\end{equation*} The collections
\begin{equation*}
   \left\{ \{\bgamma\}_i\cap\pi\{\bdelta\}_j\cap\sigma\{\bbeta\}_k \mid (i,j,k) \in [1,n]^{3}  \right\}
\end{equation*} and
\begin{equation*}
  \left\{ \{\bgamma\}_i\cap\theta\{\bdelta\}_j\cap\sigma\{\bbeta\}_k \mid (i,j,k) \in [1,n]^{3} \right\}
 \end{equation*}  are two set partitions of $\Omega$. By assumption, the subsets associated to $(i,j,k)$ in each set partition have the same cardinality. Therefore there is an element $u\in \fS_{\infty}$ which takes each subset in the first set partition to its mate in the second set partition.  That is, 
\begin{equation}\label{sets}
 u(\{\bgamma\}_i\cap\pi\{\bdelta\}_j\cap\sigma\{\bbeta\}_k)=\{\bgamma\}_i\cap\theta\{\bdelta\}_j\cap\sigma\{\bbeta\}_k
 \end{equation}
 for all $i,j,k \in [1,n]$.

We claim $u\in U=\fS_{\bgamma}\cap\sigma\fS_{\bbeta}\sigma^{-1}$. First note that, for any $i \in [1,n]$,
 \begin{align*}
     u\{\bgamma\}_i&=u\bigcup_{j,k \in [1,n]}\{\bgamma\}_i\cap\pi\{\bdelta\}_j\cap\sigma\{\bbeta\}_k\\
     &=\bigcup_{j,k \in [1,n]}u(\{\bgamma\}_i\cap\pi\{\bdelta\}_j\cap\sigma\{\bbeta\}_k)\\
     &=\bigcup_{j,k \in [1,n]}\{\bgamma\}_i\cap\theta\{\bdelta\}_j\cap\sigma\{\bbeta\}_k,\\
     &=\{\bgamma\}_i.
 \end{align*}
 Hence $u\in\fS_{\bgamma}$. On the other hand, for any $k \in [1,n]$,
\[
\sigma\{\bbeta\}_k= \bigcup_{i,j \in [1,n]}\{\bgamma\}_i\cap \pi\{\bdelta\}_j\cap \sigma\{\bbeta\}_k.
\]
Applying $u$ to both sides yields
\begin{align*}
    u\sigma\{\bbeta\}_k&=u\bigcup_{i,j \in [1,n]}\{\bgamma\}_i\cap\pi\{\bdelta\}_j\cap\sigma\{\bbeta\}_k\\
    &=\bigcup_{i,j \in [1,n]}u(\{\bgamma\}_{i}\cap\pi\{\bdelta\}_j\cap\sigma\{\bbeta\}_k)\\
    &=\bigcup_{i,j \in [1,n]}\{\bgamma\}_{i}\cap\theta\{\bdelta\}_j\cap\sigma\{\bbeta\}_k\\
     &=\sigma\{\bbeta\}_k.
\end{align*}
This shows $u\sigma\{\bbeta\}_k=\sigma\{\bbeta\}_k$ and, hence, $u\in\sigma\fS_{\bbeta}\sigma^{-1}$. So $u\in U=\fS_{\bgamma}\cap\sigma\fS_{\bbeta}\sigma^{-1}$, as claimed.

We next show that $u\pi\fS_{\bdelta}=\theta\fS_{\bdelta}$. Since $u \in U$ satisfies \cref{sets}, it follows that for all $j \in [1,n]$,
\begin{align*}
    u\pi\{\bdelta\}_j&=\bigcup_{i,k\in [1,n]}\{\bgamma\}_i\cap u\pi\{\bdelta\}_j\cap\sigma\{\bbeta\}_k\\
    &=\bigcup_{i,k\in [1,n]}u\{\bgamma\}_i\cap u\pi\{\bdelta\}_j\cap u\sigma\{\bbeta\}_k,\\
    & = \bigcup_{i,k\in [1,n]}u(\{\bgamma\}_i\cap \pi\{\bdelta\}_j\cap\sigma\{\bbeta\}_k)\\
    &=\bigcup_{i,k\in [1,n]}\{\bgamma\}_i\cap \theta\{\bdelta\}_j\cap\sigma\{\bbeta\}_k,\\
    &=\theta\{\bdelta\}_j.
\end{align*}
  But $\fS_{\bdelta}$ is the stabilizer of the set partition $\{ \{\bdelta \}_{1}, \dotsc , \{\bdelta \}_{n} \}$ and so we conclude $\theta^{-1}u\pi \in \fS_{\bdelta}$.  Hence, $u\pi\fS_{\bdelta}=\theta\fS_{\bdelta}$, as claimed.

Finally, we show the map is surjective. Let $A=(A_{i,j,k})\in\AA_{n}(\qrs)_{t}$. For any $i,k \in [1,n]$, set $X_{i,k}=\{\bgamma\}_i\cap\sigma\{\bbeta\}_k$.  On the one hand, $\fS_{\bgamma }\sigma\fS_{\bbeta }$ is the double coset corresponding to the indicator function $\xi_{s}$.  Applying \cref{E:infinite-q-to-matrix-bijection}, this shows that the cardinality of $X_{i,k}$ equals $s_{i,k}$ when $(i,k) \neq (1,1)$ and the cardinality of $X_{1,1}$ is infinite.  On the other hand,  $\sum_{j \in [1,n]} A_{i,j,k} = s_{i,k}$ for all $i,k \in [1,n]$. Taken together, this implies that for each pair $(i,k)$, there exists a set partition of $X_{i,k}$, $\left\{X_{i,j,k} \mid j \in [1,n] \right\}$, such that $X_{i,j,k}$ has cardinality $A_{i,j,k}$ for $(i,j,k)\neq(1,1,1)$ and $X_{1,1,1}$ is infinite. Observe that  $\{X_{i,j,k} \mid i,j,k \in [1,n]\}$ provides a set partition of $\Omega$.  Also observe that for $j \in [2,n]$,
\[
|\{\bdelta\}_j|= \sum_{i,k \in [1,n]} A_{i,j,k}=\left|  \bigcup_{i,k \in [1,n]}X_{i,j,k}\right|,
\]  and both $\{\bdelta \}_{1}$ and $\bigcup_{i,k \in [1,n]} X_{i,1,k}$ are infinite.  Choose $\theta\in \fS_{\infty}$ with the property that it maps  $\{\bdelta\}_j$ bijectively to $\bigcup_{i,k}X_{i,j,k}$ for all $j \in [1,n]$. By construction, $|\{\bgamma\}_i\cap\theta\{\bdelta\}_j\cap\sigma\{\bbeta\}_k|=A_{i,j,k}$.  That is,  $\theta \fS_{\bdelta}\mapsto A$, as required.
\end{proof}
Using the fact that any conjugate of a Young subgroup is an open subgroup of $\fS_{\infty}$, it follows that any $U(\bgamma, \sigma, \bbeta)$-orbit in $X(\qrs)$ is a finitary $\wideSinfinity$-set and, hence, it makes sense to take its measure.  The following result computes this measure explicitly.

To state the result, we need to introduce the following expression.  Given $t \in R$, $q,r,s \in \MMat_{n}(t)$, and $A \in \AA_{n}(\qrs)_{t}$, define
\begin{equation}\label{E:VA-definition-with-t}
\VV_A(t) = \prod_{i,k \in [1,n]} \binom{\sum_{j \in [1,n]} A_{i,j,k}}{A_{i,1,k}, \dotsc , A_{i,n,k}} =  \prod_{i,k \in [1,n]} \frac{(\sum_{j \in [1,n]} A_{i,j,k})!}{\prod_{j \in [1,n]} (A_{i,j,k}!)}.
\end{equation}  Going forward we assume $R$ and $t$ are such that $\VV_{A}(t) \in R$ for all such arrays (e.g., $R=\k$).

\begin{lemma}\label{L:measure-computation}
Let $\mathcal{O}_{A}$ be the $U(\bgamma, \sigma, \bbeta)$-orbit in $X(\qrs)$ associated to $A \in \AA_{n}(\qrs)_{t}$ by the bijection given in the proof of the previous lemma. Then,
\[
\mu_t\left( \mathcal{O}_{A}\right) = \VV_{A}(t).
\]
\end{lemma}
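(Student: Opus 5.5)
We will compute $\mu_t(\mathcal{O}_A)$ by identifying the orbit as a homogeneous space of the open subgroup $U=U(\bgamma,\sigma,\bbeta)$. We then evaluate the measure with the polynomial description of \cref{Ex:Measure-Computation}.

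Fix $\theta\fS_{\bdelta}\in\mathcal{O}_A$. By \cref{L:U-action}, $\mathcal{O}_A\cong U/V$ with $V=V(\bgamma,\sigma,\bbeta,\theta,\bdelta)$. Let $X_{i,k}=\{\bgamma\}_i\cap\sigma\{\bbeta\}_k$. Then $U$ is exactly the stabilizer of the set partition $\{X_{i,k}\}$ of $\Omega$, so
\[
U\cong\prod_{(i,k)}\mathrm{Sym}(X_{i,k}),
\]
with one infinite factor $X_{1,1}$ and all other factors of finite size $s_{i,k}$. Similarly, $V$ is the stabilizer of the finer partition $\{X_{i,j,k}=X_{i,k}\cap\theta\{\bdelta\}_j\}$. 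Hence
\[
U/V\cong\prod_{(i,k)}\mathrm{Sym}(X_{i,k})\Big/\prod_{j}\mathrm{Sym}(X_{i,j,k}),
\]
which is the set of ordered set partitions of each $X_{i,k}$ into blocks of sizes $A_{i,1,k},\dots,A_{i,n,k}$. For $(i,k)\neq(1,1)$ this factor is a finite set, and $U$ acts on it through a finite quotient. For $(i,k)=(1,1)$ it is the $\mathrm{Sym}(X_{1,1})$-set of partitions of the infinite set $X_{1,1}$ into one cofinite block of "size" $A_{1,1,1}$ and finite blocks of sizes $A_{1,j,1}$, $j\geq 2$.

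Next I would invoke the multiplicativity of the measure, which is part of the Harman--Snowden axioms. In our setting this can be checked directly from the fixed-point-polynomial description. The factor decomposition is a product of $\wideSinfinity$-sets, so the measure of the product is the product of the measures. Concretely, for large $p$, the fixed points of $\fS(p)\cap U$ on the product are the product of the fixed points on each factor.

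The finite factors then contribute exactly the integers
\[
\binom{s_{i,k}}{A_{i,1,k},\dots,A_{i,n,k}}.
\]
The infinite factor is, after identifying $X_{1,1}$ with $\Omega$ via an order-preserving bijection (up to a finite shift), the set $\fS_\infty/\fS_{\balpha}$ with $\balpha=(A_{1,1,1},A_{1,2,1},\dots,A_{1,n,1})\in\Lambda(n)_{s_{1,1}}$. One subtlety is that the relevant measure is $\mu_{s_{1,1}}$ rather than $\mu_t$. To handle this, I would compute $p_X(p)$ directly. Counting $\fS(p)$-fixed points amounts to placing the finite blocks inside $[1,p]\cap X_{1,1}$, which has $p-c$ elements, where $c$ is the constant number of points of $[1,p]$ lying in the finite $X_{i,k}$. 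This gives the polynomial
\[
\binom{p-c}{p-c-m,\,A_{1,2,1},\dots,A_{1,n,1}}, \qquad m=\sum_{j\geq 2}A_{1,j,1}.
\]
Since $c=t-s_{1,1}$ after evaluation, setting $p=t$ gives exactly
\[
\binom{s_{1,1}}{A_{1,1,1},\dots,A_{1,n,1}}
\]
in the sense of \cref{E:multinom-def}. Multiplying all the factors yields $\VV_A(t)$.

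The main obstacle is making the measure-theoretic step rigorous. Two points need care. First, $\mathcal{O}_A$ is only a $U$-set, not an $\fS_\infty$-set, so the measure must be computed through the $\wideSinfinity$ formalism. Second, the fixed-point count must be correct with respect to the subgroups $\fS(p)$. I would first try the cleanest route: compute $|\mathcal{O}_A^{\fS(p)}|$ for $p\gg0$ directly as a product of multinomials in $p$, and then verify that substituting $p=t$ reproduces \cref{E:VA-definition-with-t}. A cross-check comes from \cref{R:Evaluation-at-large-d-is-a-bijection}: at $t=d\gg0$ the formula must agree with the classical count $v_{A(d)}$. Together with polynomiality in $t$, this would pin down the answer.
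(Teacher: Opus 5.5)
Your argument is correct, but it takes a different route from the paper.

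The paper never analyzes the $U$-set $U/V$ itself. It applies the Harman--Snowden fibration axiom to $\fS_\infty/V \to \fS_\infty/U$, whose fiber is $U/V$, and gets $p_{\fS_\infty/V}(T)=p_{U/V}(T)\,p_{\fS_\infty/U}(T)$ in $\Q[T]$. Since $U$ and $V$ are Young subgroups for the compositions $(\rho_{i,k})$ and $(A_{i,j,k})$, the two outer factors are multinomials by \cref{Ex:Measure-Computation}, and dividing gives $p_{U/V}(T)$.

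You instead use the block decompositions $U=\prod_{(i,k)}\mathrm{Sym}(X_{i,k})$ and $V=\prod_{(i,j,k)}\mathrm{Sym}(X_{i,j,k})$. You then count $\fS(p)$-fixed points on $U/V$ for $p$ large enough that $\fS(p)\subseteq U$ and $[1,p]\supseteq \Omega\setminus X_{1,1}$. This count is legitimate. Since the fixed points of a product are the product of the fixed points, you do not even need multiplicativity of the measure as a separate input; the fixed-point description alone carries the argument.

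In your count, the finite factors give $\binom{s_{i,k}}{A_{i,1,k},\dots,A_{i,n,k}}$. The $(1,1)$ factor gives a falling factorial in $p-c$, which evaluates to the $(1,1)$ factor of $\VV_A(t)$ because $t-c=s_{1,1}$. With your $c$ equal to the paper's $\ell$, this is exactly the expression in \cref{E:Integer-valued-VAT}.

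What each route buys:
\begin{itemize}
\item Yours is more elementary and makes the factorization of $\VV_A$ over pairs $(i,k)$ transparent, with only the $(1,1)$ factor depending on $t$.
\item The paper's stays entirely within $\fS_\infty$-sets, where the measure is already recorded, at the cost of a polynomial division.
\end{itemize}

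Your closing cross-check against $v_{A(d)}$ is not needed. Establishing $\mu_d(\mathcal{O}_A)=v_{A(d)}$ independently would amount to the same fixed-point count.
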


\begin{proof}  Say $\theta \fS_{\bdelta} \in X(\qrs)$ is chosen so that $\theta \fS_{\bdelta}$ maps to the array $A \in \AA_{n}(\qrs)_{t}$ in the previous proof.  For short, let $U=U(\bgamma, \sigma, \bbeta)$ and let $V=V(\bgamma, \sigma, \bbeta, \theta, \bdelta)$ be the stabilizer of $\theta \fS_{\bdelta}$ for the action of $U$ on $X(\qrs)$.  Then $\mathcal{O}_{A}$ is isomorphic to $U/V$ as $\wideSinfinity$-sets so we may instead compute the measure of $U/V$ thanks to~\cite[Definition 3.1(a)]{HarmanSnowden}.

By property~\cite[Definition 3.1(e)]{HarmanSnowden}, since the canonical map $\fS_{\infty}/V \to \fS_{\infty}/U$ is a surjective map of transitive $\fS_{\infty}$-sets with fiber $U/V$, for all $t \in \k $ we have 
\[
\mu_{t}(\fS_{\infty}/V) = \mu_{t}(U/V)  \mu_{t}(\fS_{\infty}/U).
\]  On the other hand, as discussed in \cref{Ex:Measure-Computation}, there are polynomials $p_{X}(T) \in \Q [T]$ for $X \in \{\fS_{\infty}/V, \fS_{\infty}/U, U/V \}$ such that $p_{X}(t) = \mu_{t}(X)$ for all $t \in \k$.  Combining these observations shows that 
\begin{equation}\label{E:Polynomial-Relationship}
p_{\fS_{\infty}/V}(T) = p_{U/V}(T)p_{\fS_{\infty}/U}(T) 
\end{equation}in $\Q [T]$.

Note that $V= V(\bgamma, \sigma, \bbeta, \theta, \bdelta)=  \fS_{\bgamma} \cap \sigma \fS_{\bbeta}\sigma^{-1} \cap \theta\fS_{\bdelta}\theta^{-1}$ is a Young subgroup of $\fS_{\infty}$.  From the proof of \cref{L:orbit-bijection} it follows that this Young subgroup is defined by the composition $\left( A_{i,j,k} \right) \in \Lambda(n^{3})_{t}$.  Replacing $t$ with $T$ in $A_{1,1,1}$ and applying \cref{Ex:Measure-Computation} yields
\[
p_{\fS_{\infty}/V}(T) = \binom{T}{A_{1,1,1}, A_{1,1,2}, \dots , A_{n,n,n}} = \frac{T(T-1)\cdots  (T-m +1)}{\prod_{(i,j,k) \in [1,n]^{3} \backslash \{(1,1,1) \} } (A_{i,j,k}!)},
\]  where $m = \sum_{(i,j,k) \in [1,n]^{3} \backslash \{(1,1,1) \}} A_{i,j,k}$.

Likewise, note that $U= U(\bgamma, \sigma, \bbeta) = \fS_{\bgamma} \cap \sigma \fS_{\bbeta}\sigma^{-1}$ is a Young subgroup of $\fS_{\infty}$.     Moreover, if for $(i,k) \in [1,n]^{2} $ we set $\rho_{i,k} = \sum_{j \in [1,n]} A_{i,j,k}$, then $\left( \rho_{i,k}  \right) \in \Lambda(n^{2})_{t}$ is the composition which defines this Young subgroup.  Replacing $t$ with $T$ in $\rho_{1,1}$ and applying \cref{Ex:Measure-Computation} yields
\[
p_{\fS_{\infty}/U}(T) = \binom{T}{\rho_{1,1}, \rho_{1,2}, \dots , \rho_{n,n}} = \frac{T(T-1)\cdots (T-\ell +1) }{ \prod_{(i,k) \in [1,n]^{2} \backslash \{(1,1) \}} (\rho_{i,k}!)},
\] where $\ell = \sum_{(i,k) \in [1,n]^{2}\backslash \{(1,1) \}}\rho_{i,k}= \sum_{(i,j,k) \in [1,n]^{3} \backslash \{(1,1,1) \}} A_{i,j,k} - \sum_{j \in [2,n]} A_{1,j,1}$.

Substituting these into \cref{E:Polynomial-Relationship} and simplifying demonstrates that 
\begin{align}
p_{U/V}(T) &=   \frac{(T-\ell)(T-\ell-1)\dots (T - m + 1)}{\prod_{j \in [2,n]} (A_{1,j,1}!)} \prod_{(i,k) \in [1,n]^{2} \backslash \{(1,1) \}} \frac{\rho_{i,k}!}{A_{i,1,k}!\dots A_{i,n,k}!} \nonumber \\
          &=  \binom{T-\ell}{T-m,A_{1,2,1}, \dots , A_{1,n,1}}  \prod_{(i,k) \in [1,n]^{2} \backslash \{(1,1) \}} \binom{ \rho_{i,k} }{A_{i,1,k}, \dots , A_{i,n,k}}\label{E:Integer-valued-VAT}\\
          &= \VV_{A}(T). \nonumber
\end{align}  Evaluating $p_{U/V}(T)$ at $T=t$ proves the assertion.
\end{proof}

\begin{remark}\label{R:VAT-is-an-Integer-Valued-Polynomial} If $R = \Q [T]$ and $t=T$, the formulation of $\VV_{A}(T)$ given in \cref{E:Integer-valued-VAT} along with the discussion in \cref{SS:RingofIntegerValuedPolys} shows that $\VV_{A}(T)$ is an integer-valued polynomial.
\end{remark}

\begin{theorem}\label{T:product-formula}
Let $q, r, s \in \MMat_{n}(t)$. Then,
\[
C_{q,r}^{s}(t) =\sum_{A \in \AA_{n}(\qrs)_{t}} \VV_{A}(t).
\]
\end{theorem}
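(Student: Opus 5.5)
The plan is to assemble the three preceding lemmas. By \cref{E:X(qrs)}, $C_{q,r}^{s}(t)=\mu_{t}(X(\qrs))$, where $X(\qrs)\subseteq \fS_{\infty}/\fS_{\bdelta}$ is the set of cosets $\theta\fS_{\bdelta}$ such that $\fS_{\bgamma}\theta\fS_{\bdelta}$ is labelled by $q$ and $\fS_{\bdelta}\theta^{-1}\sigma\fS_{\bbeta}$ is labelled by $r$. Here $\fS_{\bgamma}\sigma\fS_{\bbeta}$ is the double coset labelled by $s$. If $q,r,s$ are not compatible (i.e.\ not of the shapes $\MMat_{n}(\bgamma,\bdelta)_{t}$, $\MMat_{n}(\bdelta,\bbeta)_{t}$, $\MMat_{n}(\bgamma,\bbeta)_{t}$), then both sides vanish. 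On the left this is by definition of the product in the path algebra; on the right it is because $\AA_{n}(\qrs)_{t}$ is empty, since its margin conditions force the row and column sums to agree. So I assume compatibility.

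By \cref{L:U-action}, the open subgroup $U=U(\bgamma,\sigma,\bbeta)$ acts on $X(\qrs)$. By \cref{L:orbit-bijection}, its orbits are exactly the sets $\mathcal{O}_{A}$ for $A\in\AA_{n}(\qrs)_{t}$, which is a finite set. Hence
\[
X(\qrs)=\bigsqcup_{A\in\AA_{n}(\qrs)_{t}}\mathcal{O}_{A}
\]
is a finite disjoint union of finitary $\wideSinfinity$-sets. I would verify that $X(\qrs)$ itself is a finitary $\wideSinfinity$-set for $U$; it is a union of finitely many $U$-orbits, each with open stabilizers. Then additivity of the measure under finite disjoint unions (\cite[Definition 3.1]{HarmanSnowden}) gives
\[
\mu_{t}(X(\qrs))=\sum_{A}\mu_{t}(\mathcal{O}_{A}).
\]
Finally, \cref{L:measure-computation} identifies each term as $\VV_{A}(t)$, which yields the formula.

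The main point needing care is the first step. The integral in \cref{E:C-as-Integral} is taken over $\fS_{\infty}/\fS_{\bdelta}$ with the measure defined relative to an open subgroup, here the stabilizer $\fS_{\bgamma}\cap\sigma\fS_{\bbeta}\sigma^{-1}$ of the point $(\fS_{\bgamma},\sigma\fS_{\bbeta})$. I need the integrand to be a Schwartz function for $U$, so that its integral equals the measure of its support $X(\qrs)$. This holds because $u\in U$ preserves both double-coset labels, which is exactly what \cref{L:U-action} records. The remaining ingredient is that $\mu_{t}$ does not depend on the choice of open subgroup used to regard a set as finitary, which is part of the Harman--Snowden formalism. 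Once that is in place, the argument is just additivity followed by the per-orbit computation.
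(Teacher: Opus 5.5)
Your proposal is correct and is essentially the paper's own proof. Both write $C_{q,r}^{s}(t)=\mu_{t}(X(q,r,s))$, split $X(q,r,s)$ into the finitely many $U(\bgamma,\sigma,\bbeta)$-orbits $\mathcal{O}_{A}$ indexed by $A\in\AA_{n}(q,r,s)_{t}$, apply additivity of $\mu_{t}$, and evaluate each $\mu_{t}(\mathcal{O}_{A})=\VV_{A}(t)$ by the measure lemma. The extra points you handle (the incompatible-shape case, the integrand being a Schwartz function for an open stabilizer, and independence from the choice of open subgroup) are left implicit in the paper, and your treatment of them is sound.
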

\begin{proof}  This follows immediately from the lemmas.  Namely,
\[
C_{q,r}^s(t) = \mu_{t}\left(X(\qrs) \right) = \sum_{A \in \AA_{n}(\qrs)_{t}} \mu_{t}(\mathcal{O}_{A}) =  \sum_{A \in \AA_{n}(\qrs)_{t}} \VV_{A}(t).
\]
\end{proof}

Recall that for $q \in \MMat_{n}(\bdelta, \bbeta)_{t}$, $q^{*} \in \MMat_{n}(\bbeta , \bdelta )_{t}$ denotes the transpose matrix.

\begin{corollary}\label{C:involutive-anti-automorphism-on-Snt}
    The map $\tau:\SS(n,t)\to \SS(n,t)$ defined by  $\xi_q\mapsto\xi_{q^*}$ is an involutive anti-automorphism of $\k$-algebras.
\end{corollary}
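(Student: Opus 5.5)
Since $q \mapsto q^{*}$ is an involution of $\MMat_{n}(t)$ carrying $\MMat_{n}(\bdelta,\bbeta)_{t}$ to $\MMat_{n}(\bbeta,\bdelta)_{t}$, the map $\tau$ is a $\k$-linear involutive bijection of $\SS(n,t)$; this uses \cref{P:Snt-Basis-Theorem}, because $\StdBasis(n)_{t}$ is a basis and $\tau$ permutes it. What remains is the anti-multiplicativity
\[
\tau(\xi_{q}\xi_{r}) = \xi_{r^{*}}\xi_{q^{*}}.
\]
We check this on basis elements, where the products are taken in the path algebra. If $q \in \MMat_{n}(\bgamma,\bdelta)_{t}$ and $r \in \MMat_{n}(\bdelta',\bbeta)_{t}$ with $\bdelta \neq \bdelta'$, both sides vanish, since $r^{*}$ has column weight $\bdelta'$ and $q^{*}$ has row weight $\bdelta$. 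So it suffices to show $C_{q,r}^{s}(t) = C_{r^{*},q^{*}}^{s^{*}}(t)$ for all $s \in \MMat_{n}(\bgamma,\bbeta)_{t}$.

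We use \cref{T:product-formula}. Given $A \in \AA_{n}(q,r,s)_{t}$, define the array $A'$ by
\[
A'_{k,j,i} = A_{i,j,k}.
\]
We claim $A \mapsto A'$ is a bijection $\AA_{n}(q,r,s)_{t} \to \AA_{n}(r^{*},q^{*},s^{*})_{t}$. The entry $A'_{1,1,1} = A_{1,1,1}$ still lies in $t-\Z$, and the other entries are nonnegative integers. For the marginals:
\begin{itemize}
\item $\sum_{i} A'_{k,j,i} = \sum_{i} A_{i,j,k} = r_{j,k} = (r^{*})_{k,j}$;
\item $\sum_{k} A'_{k,j,i} = q_{i,j} = (q^{*})_{j,i}$;
\item $\sum_{j} A'_{k,j,i} = s_{i,k} = (s^{*})_{k,i}$.
\end{itemize}
These are exactly the three conditions defining $\AA_{n}(r^{*},q^{*},s^{*})_{t}$. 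The map is its own inverse, so it is a bijection.

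It remains to see $\VV_{A'}(t) = \VV_{A}(t)$. In \cref{E:VA-definition-with-t} the product runs over pairs $(i,k)$ of multinomials in the entries $A_{i,\bullet,k}$. Swapping the outer indices $i \leftrightarrow k$ just reindexes this product, and each factor is unchanged. Strictly, $\VV_{A}(t)$ should be read through its polynomial form \cref{E:Integer-valued-VAT} evaluated at $t$, since the $(1,1)$ factor involves non-integer entries. That factor depends only on the multiset $\{A_{1,j,1}\}_{j}$ together with $A_{1,1,1}$, and the total $\sum_j A_{1,j,1}$. These are preserved, because $A'_{1,j,1} = A_{1,j,1}$. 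Summing over the bijection gives $C_{q,r}^{s}(t) = C_{r^{*},q^{*}}^{s^{*}}(t)$.

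The main point to check is the handling of the $(1,1,1)$ entry and of the $(1,1)$ factor, which need the polynomial interpretation. Since that factor depends only on data fixed by the involution, I expect no real obstacle. An alternative, more conceptual route would transport Schwartz functions along the swap $X \times Y \cong Y \times X$ and use Fubini for $\mu_{t}$. The combinatorial check above is cleaner given what the paper has already established.
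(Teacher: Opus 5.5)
Your proof is correct and is essentially the paper's argument. The paper uses the same bijection $A \mapsto (A_{k,j,i})$ from $\AA_{n}(q,r,s)_{t}$ to $\AA_{n}(r^{*},q^{*},s^{*})_{t}$, notes that $\VV_{A}(t)$ is unchanged by it, and concludes $C_{q,r}^{s}(t)=C_{r^{*},q^{*}}^{s^{*}}(t)$. Your extra care with mismatched-weight products and with reading the $(1,1)$ factor through its polynomial form \cref{E:Integer-valued-VAT} makes explicit details the paper leaves implicit.
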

\begin{proof} Let $A = (A_{i,j,k}) \in \AA_{n}(\qrs)_{t}$.  There is a bijection $\AA_{n}(\qrs)_{t} \to \AA_{n}(r^{*}, q^{*}, s^{*})_{t}$ given by $A = (A_{i,j,k}) \mapsto A^{*}=(A_{k,j,i})$. The formula given in \cref{E:VA-definition-with-t} can be used to show that $\VV_{A}(t) = \VV_{A^{*}}(t)$ for all $A \in \AA_{n}(\qrs)_{t}$ and, hence, $C_{q,r}^{s}(t)=C_{r^{*}, q^{*}}^{s^{*}}(t)$ for all $\qrs \in \MMat_{n}(t)$.  The claim follows.
\end{proof}

Given a composition $\bbeta = (\beta_{1}, \dots , \beta_{n}) \in \Lambda(n)_{t}$, let $q_{\bbeta} = (q_{i,j}) \in \MMat_{n}(\bbeta, \bbeta)_{t}$ be the matrix which has $q_{i,i}= \beta_{i}$ and $q_{i,j}=0$ for $i \neq j$.  An elementary application of the formula for $C_{q,r}^{s}(t)$ establishes the following result.

\begin{corollary}\label{C:Identity-Morphisms} 
For each $\bbeta \in \Lambda(n)_{t}$ the element $1_{\bbeta} := \xi_{q_{\bbeta}} \in \Hom_{\Perm (\fS_{\infty}; \mu_{t})}(\MM^{\bbeta}, \MM^{\bbeta})$ is the identity morphism.  Thus, $\SS(n,t)$ is a locally unital $\k$-algebra with distinguished set of pairwise orthogonal idempotents $\left\{1_{\bbeta} \mid \bbeta \in \Lambda(n)_{t} \right\}$. These idempotents are fixed by the anti-involution $\tau$.
\end{corollary}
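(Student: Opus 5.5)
To prove \cref{C:Identity-Morphisms}, I will show directly from \cref{T:product-formula} that $\xi_{q}\circ 1_{\bbeta} = \xi_{q}$ for every $q \in \MMat_{n}(\bdelta,\bbeta)_{t}$, and then obtain $1_{\bbeta}\circ \xi_{r} = \xi_{r}$ for $r \in \MMat_{n}(\bbeta,\bgamma)_{t}$ by applying the anti-automorphism $\tau$ of \cref{C:involutive-anti-automorphism-on-Snt}. For this I need $\tau(1_{\bbeta}) = 1_{\bbeta}$, which is immediate because $q_{\bbeta}$ is diagonal and so $q_{\bbeta}^{*}=q_{\bbeta}$; this also gives the final sentence of the corollary. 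The remaining claims follow formally. Once each $1_{\bbeta}$ is an identity morphism, the elements $1_{\bbeta}$ are idempotents. They are pairwise orthogonal because products of morphisms between non-composable objects are zero in the path algebra. Finally, $\SS(n,t)=\bigoplus_{\bbeta,\bdelta} 1_{\bdelta}\SS(n,t)1_{\bbeta}$ by \cref{E:Interpolating-Schur-Algebra-Definition}, which is the locally unital structure.

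The main computation is $C^{s}_{q,q_{\bbeta}}(t)$ for $s \in \MMat_{n}(\bdelta,\bbeta)_{t}$. Let $A \in \AA_{n}(q,q_{\bbeta},s)_{t}$. The $(j,k)$ margin condition reads $\sum_{i} A_{i,j,k} = (q_{\bbeta})_{j,k}$.

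First take $j\neq k$. Then every entry with $j \neq k$ is a nonnegative integer, since $(1,1,1)$ has $j=k$, and the margin is $0$. Hence $A_{i,j,k}=0$ whenever $j\neq k$.

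It follows that $A_{i,j,j}=q_{i,j}$ from the $(i,j)$ margin, and $A_{i,j,j}=s_{i,j}$ from the $(i,k)$ margin with $k=j$. So $\AA_{n}(q,q_{\bbeta},s)_{t}$ is empty unless $s=q$, in which case it consists of the single array $A_{i,j,k}=\delta_{j,k}q_{i,j}$. I will check that this array is allowed: its $(1,1,1)$ entry is $q_{1,1}\in t-\Z$, as required.

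Next I compute $\VV_{A}(t)$ from \cref{E:VA-definition-with-t}. For each $(i,k)$, the multinomial $\binom{\sum_j A_{i,j,k}}{A_{i,1,k},\dots,A_{i,n,k}}$ has only one possibly nonzero lower entry, namely $A_{i,k,k}$, so it equals $1$. This is the one step needing care. At $(i,k)=(1,1)$ the entries are non-integral, and the factor must be read via the polynomial form \cref{E:Integer-valued-VAT}. There it is $\binom{T-\ell}{T-m,A_{1,2,1},\dots}$ with $A_{1,j,1}=0$ for $j\ge2$, so $\ell=m$ and the factor equals $1$. Alternatively, it suffices to note that $\VV_{A}(d)=1$ for all $d\gg0$, since $\VV_{A}(T)$ is a polynomial.

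Therefore $C^{s}_{q,q_{\bbeta}}(t)=\delta_{s,q}$ and $\xi_{q}\circ 1_{\bbeta}=\xi_{q}$. The only real obstacle is making sense of the $(1,1)$ multinomial factor, which the polynomial interpretation handles.
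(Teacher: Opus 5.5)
Your proposal is correct and follows the paper's route. The paper justifies this corollary as ``an elementary application of the formula for $C_{q,r}^{s}(t)$'', and your computation is exactly that application: $\AA_{n}(q,q_{\bbeta},s)_{t}$ is empty unless $s=q$, in which case it is the single array $A_{i,j,k}=\delta_{j,k}q_{i,j}$ with $\VV_{A}(t)=1$ (reading the $(1,1)$ factor through \cref{E:Integer-valued-VAT}). Using $\tau$ to get $1_{\bbeta}\circ\xi_{r}=\xi_{r}$, instead of redoing the symmetric array computation, is a harmless shortcut, since \cref{C:involutive-anti-automorphism-on-Snt} is proved beforehand without using identity morphisms.
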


\section{Negligible Morphisms, Semisimplification, and the Classical Schur Algebra}\label{S:Semisimplification}

\subsection{Negligible Morphisms}\label{SS:negligible-morphisms}

By~\cite[Corollary 8.7]{HarmanSnowden}, $\Perm (\fS_{\infty}; \mu_{t})$ is a $\k$-linear rigid tensor category. The tensor product on objects is given by the direct product of finitary $\fS_{\infty}$-sets.  Every object of $\Perm (\fS_{\infty}; \mu_{t})$ is self-dual and $\Perm (\fS_{\infty}; \mu_{t})$ is spherical.  We refer the reader to ~\cite{EGNO} for definitions and standard results for such categories.

Such categories have a $\k$-linear \emph{categorical trace} map
\[
\tr=\tr_{X}: \End_{\Perm (\fS_{\infty}; \mu_{t})}(X) \to \k
\] for every object $X$ in $\Perm (\fS_{\infty}; \mu_{t})$.  In this setting the categorical trace has a concrete description~\cite[Proposition 8.10]{HarmanSnowden}.  Namely, given $f \in \End_{\Perm (\fS_{\infty}; \mu_{t})}(X)$ (i.e., a Schwartz function $f: X \times X \to \k$), then 
\begin{equation}\label{E:trace-definition}
\tr (f) = \int_{X} f(x,x) \;\; dx.
\end{equation}

A morphism $f : X \to Y$ is called \emph{negligible} if for all morphisms $g: Y \to X $, $\tr (f \circ g)=\tr (g \circ f) =0$.  For every pair of objects $X, Y$ in $\Perm (\fS_{\infty}; \mu_{t})$, let
\[
\NN (X,Y) = \left\{f \in \Hom_{\Perm (\fS_{\infty}; \mu_{t})}(X,Y) \mid \text{$f$ is negligible} \right\}.
\]  By \cite[Exercise 8.18.9]{EGNO} or \cite[Proposition 2.4]{EO-semisimplification}, $\NN$ is a tensor ideal of $\Perm (\fS_{\infty}; \mu_{t})$ and the quotient category $\Perm (\fS_{\infty}; \mu_{t})/\NN$ is a semisimple $\k$-linear rigid tensor category.  This is known as the \emph{non-degenerate quotient} or \emph{semisimplification} of $\Perm (\fS_{\infty}; \mu_{t})$.  

Recall that $\xi_{q_{\bbeta}}$ is the identity morphism in $\End_{\Perm (\fS_{\infty}; \mu_{t})}(\MM^{\bbeta})$.

\begin{lemma}\label{L:basic-trace-facts}  In $\Perm (\fS_{\infty}; \mu_{t})$ the following categorical trace values hold:
\begin{enumerate}
\item Given $q \in \MMat_{n}(\bbeta, \bbeta)_{t}$, 
\[
\tr (\xi_{q}) = \delta_{q, q_{\bbeta}} \binom{t}{\beta_{1}, \dotsc , \beta_{n}};
\]
\item Given $q \in \MMat_{n}(\bbeta, \bdelta)_{t}$ and $r \in \MMat_{n}(\bdelta, \bbeta)_{t}$, let $m = \sum_{(i,j) \in [1,n]^{2}\backslash \{(1,1) \}} q_{i,j}$. Then there exists $z \in \Q_{>0 }$ such that
\begin{equation}\label{E:trace-calculation}
\tr (\xi_{q} \circ \xi_{r}) = \delta_{r,q^{*}} \prod_{i \in [1,n]} \frac{\left(\sum_{j \in [1,n]} q_{i,j} \right)!}{q_{i,1}!q_{i,2}!\dotsb q_{i,n}!}
      = \delta_{r,q^{*}}t(t-1)(t-2)\dotsb (t-m+1)z,
\end{equation}
\end{enumerate}
\end{lemma}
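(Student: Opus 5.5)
For (1) I would use the concrete trace formula \cref{E:trace-definition}, $\tr(\xi_{q}) = \int_{\MM^{\bbeta}} \xi_{q}(x,x)\, dx$. The diagonal $\{(x,x)\}\subseteq \MM^{\bbeta}\times\MM^{\bbeta}$ is a single $\fS_{\infty}$-orbit. Under \cref{E:Orb-Lemma} it corresponds to the trivial double coset $\fS_{\bbeta}\fS_{\bbeta}$. Under \cref{E:infinite-q-to-matrix-bijection} that double coset is labelled by the diagonal matrix $q_{\bbeta}$, since $\sigma=1$ gives $|\{\bbeta\}_{i}\cap\{\bbeta\}_{j}| = \delta_{i,j}\beta_{i}$. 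Hence $x\mapsto \xi_{q}(x,x)$ is identically $1$ if $q=q_{\bbeta}$ and identically $0$ otherwise. The integral is therefore $\delta_{q,q_{\bbeta}}\,\mu_{t}(\MM^{\bbeta})$, and \cref{Ex:Measure-Computation} gives the multinomial $\binom{t}{\beta_{1},\dotsc,\beta_{n}}$.

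For (2) I would first reduce to a single structure constant. By \cref{T:product-formula},
\[
\xi_{q}\circ\xi_{r}=\sum_{s} C_{q,r}^{s}(t)\,\xi_{s},
\]
so linearity of the trace and part (1) give $\tr(\xi_{q}\circ\xi_{r}) = C_{q,r}^{q_{\bbeta}}(t)\,\binom{t}{\beta_{1},\dotsc,\beta_{n}}$. Next I would determine $\AA_{n}(q,r,q_{\bbeta})_{t}$. The third marginal condition $\sum_{j}A_{i,j,k} = (q_{\bbeta})_{i,k}=0$ for $i\neq k$ forces $A_{i,j,k}=0$ whenever $i\neq k$; the entries $A_{i,j,k}$ are nonnegative integers except possibly $A_{1,1,1}$, which has $i=k$. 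The remaining two marginal conditions then read $A_{i,j,i}=q_{i,j}$ and $A_{i,j,i}=r_{j,i}$. So the set of arrays is empty unless $r=q^{*}$, and when $r=q^{*}$ it consists of exactly one array. That array gives
\[
C_{q,q^{*}}^{q_{\bbeta}}(t)=\VV_{A}(t)=\prod_{i}\binom{\beta_{i}}{q_{i,1},\dotsc,q_{i,n}},
\]
which is the displayed product of row multinomials (with $\sum_{j}q_{i,j}=\beta_{i}$).

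To get the second expression, with the factor $t(t-1)\dotsb(t-m+1)$, I would combine this constant with $\mu_{t}(\MM^{\bbeta})$. It is cleaner to do this directly, by Fubini for the convolution:
\[
\tr(\xi_{q}\circ\xi_{r})=\int\!\!\int \xi_{q}(x,y)\,\xi_{r}(y,x)\,dy\,dx .
\]
Since $\xi_{r}(y,x)=\xi_{r^{*}}(x,y)$, the integrand is the indicator of the orbit labelled $q$ in $\MM^{\bbeta}\times\MM^{\bdelta}$ intersected with the orbit labelled $r^{*}$. This is zero unless $r=q^{*}$, and otherwise the trace is the measure of that single orbit. That orbit is $\fS_{\infty}/(\fS_{\bbeta}\cap\sigma\fS_{\bdelta}\sigma^{-1})$, a Young subgroup quotient indexed by the composition $(q_{i,j})\in\Lambda(n^{2})_{t}$. \Cref{Ex:Measure-Computation} then gives
\[
\binom{t}{(q_{i,j})}=\frac{t(t-1)\dotsb(t-m+1)}{\prod_{(i,j)\neq(1,1)}q_{i,j}!},
\]
so $z=\bigl(\prod_{(i,j)\neq(1,1)}q_{i,j}!\bigr)^{-1}\in\Q_{>0}$.

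The main obstacle is bookkeeping rather than anything conceptual. I need to check that these two computations agree, namely that the product of row multinomials times $\binom{t}{\beta_{1},\dotsc,\beta_{n}}$ equals $\binom{t}{(q_{i,j})}$. Because $\beta_{1}$ and $q_{1,1}$ lie in $t-\Z$, the factorials involving them must be read through the falling-factorial conventions of \cref{E:multinom-def}. I would verify the identity as polynomials in $T$, by evaluating at large integers $d$ as in \cref{R:Evaluation-at-large-d-is-a-bijection}, where it is the classical multinomial identity.
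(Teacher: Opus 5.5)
Your proof is correct. Part (1), the reduction $\tr(\xi_q\circ\xi_r)=C^{q_{\bbeta}}_{q,r}(t)\binom{t}{\beta_1,\dotsc,\beta_n}$, and the uniqueness of the array in $\AA_n(q,r,q_{\bbeta})_t$ are exactly the paper's argument.

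Where you differ is in reaching the form $t(t-1)\dotsb(t-m+1)z$. The paper stays with the structure constant. It multiplies $\VV_A(t)$ by $\binom{t}{\beta_1,\dotsc,\beta_n}$ and isolates the two $t$-dependent falling factorials $(t-\ell)\dotsb(t-m+1)$ and $t(t-1)\dotsb(t-u+1)$. It then observes that $\ell=u=t-\beta_1$, so the two factorials concatenate.

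You instead use Fubini to write the trace as $\delta_{r,q^{*}}\,\mu_t(\mathcal{O}_q)$, where $\mathcal{O}_q\subseteq\MM^{\bbeta}\times\MM^{\bdelta}$ is the orbit labelled by $q$. You then read off its measure from the Young subgroup $\fS_{\bbeta}\cap\sigma\fS_{\bdelta}\sigma^{-1}$ with composition $(q_{i,j})$. This is more conceptual: it produces the factor $\delta_{r,q^*}$ with no array bookkeeping and gives the explicit constant $z=\bigl(\prod_{(i,j)\neq(1,1)}q_{i,j}!\bigr)^{-1}$.

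The cost is an appeal to a Fubini property for Harman--Snowden integrals, which the paper's route never needs. You can sidestep it by evaluating the iterated integral directly, since that is literally how the trace of a composite is defined. The inner integral is the measure of a fibre of $\mathcal{O}_q\to\MM^{\bbeta}$ and is constant in $x$. The multiplicativity axiom for measures, used exactly as in the paper's computation of $\mu_t(\mathcal{O}_A)$, then turns (fibre measure)$\,\cdot\,\mu_t(\MM^{\bbeta})$ into $\mu_t(\mathcal{O}_q)$.

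The ``main obstacle'' you flag is not really one. Both of your computations evaluate the same number $\tr(\xi_q\circ\xi_{q^*})$, so they agree automatically. The polynomial-identity check is valid but redundant.

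One point to state explicitly: the middle expression in the displayed statement omits the factor $\binom{t}{\beta_1,\dotsc,\beta_n}$. For example, take $\bbeta=(t-1,1)$ and $q=r=q_{\bbeta}$. The trace is $t$, while the product of row multinomials is $1$. What you actually prove, like the paper's own proof, is that the trace equals the product of row multinomials times $\binom{t}{\beta_1,\dotsc,\beta_n}$. So you should not identify $C^{q_{\bbeta}}_{q,q^{*}}(t)$ with ``the displayed product'' as though the display were the trace itself.
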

\begin{proof} To prove (1), choose $\sigma \in \fS_{\infty}$ so that the $\fS_{\infty}$-orbit in $\fS_{\infty}/\fS_{\bbeta} \times \fS_{\infty}/\fS_{\bbeta}$ which contains $(\fS_{\bbeta}, \sigma \fS_{\bbeta})$ corresponds to $q$ as described in \cref{SS:Infinite-Symmetric-Group}.  For short, write $\mathcal{O}_{\sigma}$ for that orbit. We compute the trace using \cref{E:trace-definition}, 
\[
\tr (\xi_{q}) = \int_{\fS_{\infty}/\fS_{\bbeta}} \xi_{q}(\theta \fS_{\bbeta},\theta \fS_{\bbeta}) \;\; d\left( \theta \fS_{\bbeta}\right).
\]  Since $\xi_{q}$ is the indicator function for $\mathcal{O}_{\sigma}$ and since $(\theta \fS_{\bbeta}, \theta \fS_{\bbeta})$ is in $\mathcal{O}_{\sigma}$ if and only if $\sigma \in \fS_{\bbeta}$, this integral is identically zero unless $q = q_{\bbeta}$.  In that case, 
\begin{align*}
\tr (\xi_{q_{\bbeta}}) &= \int_{\fS_{\infty}/\fS_{\bbeta}} \xi_{q_{\bbeta}}(\theta \fS_{\bbeta},\theta \fS_{\bbeta}) \;\; d\left( \theta \fS_{\bbeta} \right)\\
    &= \mu_{t}\left(\fS_{\infty}/\fS_{\bbeta} \right) \\
    &= \binom{t}{\beta_{1}, \dotsc , \beta_{n}},
\end{align*} where the last equality was given in \cref{Ex:Measure-Computation}.

To prove (2), note that the $\k$-linearity of the trace function along with (1) immediately implies that 
\[
\tr (\xi_{q} \circ \xi_{r}) = C_{q,r}^{q_{\bbeta}}(t)\tr (\xi_{q_{\bbeta}}),
\] where $C_{q,r}^{q_{\bbeta}}(t)$ is the structure constant described by \cref{T:product-formula}.     We next claim that $\AA_{n}(q,r, q_{\bbeta})$ contains exactly one element if $r = q^{*}$ and is otherwise empty.  Let $A = (A_{i,j,k})$ be an array in $\AA_{n}(q,r,q_{\bbeta})$.  From the fact that the entries of $A$ are nonnegative integers (excepting $A_{1,1,1}$, of course), the condition that $\sum_{j \in [1,n]} A_{i,j,k} = (q_{\bbeta})_{i,k} = 0$ for $i \neq k$ implies that if $i \neq k$, then $A_{i,j,k}=0$ for all $j \in [1,n]$.  But then, since $\sum_{k \in [1,n]} A_{i,j,k} = q_{i,j}$, it follows that for all $i,j \in [1,n]$ we have $A_{i,j,k}= q_{i,j}$ if $k = i$, and $A_{i,j,k}=0$ if $k \neq i$.  That is, $A$ is completely determined by $q$ and $q_{\bbeta}$, if it exists.  Hence, there is at most one such array.  And if the array exists, then $r_{j,k} = \sum_{i \in [1,n]} A_{i,j,k} = A_{k,j,k}= q_{k,j}$ and so $r = q^{*}$.  In particular, $\tr (\xi_{q} \circ \xi_{r}) =0$ if $r \neq q^{*}$.

Now assume $r=q^{*}$ and set $A \in \AA_{n}(q,r, q_{\bbeta})$ to be the unique array determined in the previous paragraph.  Applying \cref{T:product-formula}, \cref{E:VA-definition-with-t}, and claim (1)  yields
\begin{align*}
\tr (\xi_{q} \circ \xi_{r}) &= \VV_{A}(t)\tr (\xi_{q_{\bbeta}})\\
 &=\prod_{i,k \in [1,n]} \frac{\left(\sum_{j \in [1,n]} A_{i,j,k} \right)!}{A_{i,1,k}!A_{i,2,k}!\dotsb A_{i,n,k}!}\tr (\xi_{q_{\bbeta}})\\
   &=\prod_{i \in [1,n]} \frac{\left(\sum_{j \in [1,n]} q_{i,j} \right)!}{q_{i,1}!q_{i,2}!\dotsb q_{i,n}!}\binom{t}{\beta_{1}, \dotsc , \beta_{n}}.
\end{align*}
Isolating the terms which contain $t$ yields
\begin{equation}\label{E:product-equation2}
 (t-\ell)(t-\ell-1)(t-\ell-2)\dotsb (t-m+1)t(t-1)(t-2)\dotsb (t-u+1)z,
\end{equation}
where $z \in \Q_{>0 }$, $\ell =\sum_{(i,j) \in [1,n]^{2}\backslash \{(1,1) \}} q_{i,j}- \sum_{j \in [2,n]} q_{1,j}$, $m = \sum_{(i,j) \in [1,n]^{2}\backslash \{(1,1) \}} q_{i,j}$, and $u = \sum_{i \in [2,n]} \beta_{i}$. Using $\sum_{i,j \in [1,n]} q_{i,j} = t$ and the row-sum $\sum_{j \in [1,n]} q_{1,j} = \beta_{1}$, both $\ell$ and $u$ equal $t - \beta_{1}$. Therefore, after replacing $\ell$ with $u$ and rearranging terms, \cref{E:product-equation2} becomes 
\[
t(t-1)(t-2)\dotsb (t-u+1)(t-u)\dotsb (t-m+1) z.
\] The claim follows.
\end{proof}

\begin{corollary}\label{C:Semisimplification-and-finite-Snd}  Let $q = (q_{i,j}) \in \MMat_{n}(\bdelta, \bbeta)_{t}$.  Then, $\xi_{q}$ is negligible if and only if $t \in \Z_{\geq 0}$ and $q_{1,1}<0$.
\end{corollary}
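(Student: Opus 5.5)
To decide when $\xi_{q}$ is negligible, I would test it only against basis elements, using \cref{L:basic-trace-facts}. By linearity of the trace and \cref{P:Snt-Basis-Theorem}, $\xi_{q}\colon \MM^{\bbeta}\to \MM^{\bdelta}$ is negligible if and only if $\tr(\xi_{q}\circ \xi_{r}) = 0$ and $\tr(\xi_{r}\circ \xi_{q})=0$ for every $r \in \MMat_{n}(\bbeta,\bdelta)_{t}$. (Here $q$ plays the role of the matrix in $\MMat_{n}(\bdelta,\bbeta)_t$; the roles of $\bbeta,\bdelta$ in \cref{L:basic-trace-facts}(2) are simply relabeled.) By \cref{L:basic-trace-facts}(2), both traces vanish automatically unless $r = q^{*}$. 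So negligibility is equivalent to the vanishing of the two numbers $\tr(\xi_{q}\circ\xi_{q^{*}})$ and $\tr(\xi_{q^{*}}\circ \xi_{q})$.

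Next I would evaluate these two numbers with \cref{E:trace-calculation}. Let $m = \sum_{(i,j)\neq(1,1)} q_{i,j}$. Then
\[
\tr(\xi_{q}\circ\xi_{q^{*}}) = t(t-1)\dotsb(t-m+1)\,z, \qquad z \in \Q_{>0}.
\]
For $\tr(\xi_{q^{*}}\circ\xi_{q})$ I would apply the same formula to $q^{*}$. The off-diagonal-corner sum of $q^{*}$ is again $m$, since transposition fixes the $(1,1)$ entry. So this trace equals $t(t-1)\dotsb(t-m+1)\,z'$ with $z'\in\Q_{>0}$. Because $z,z'$ are nonzero and $\k$ has characteristic zero, both traces vanish exactly when $\prod_{a=0}^{m-1}(t-a) = 0$, that is, when $t\in\{0,1,\dots,m-1\}$.

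Finally I would translate this condition into the statement. We have $q_{1,1} = t - m$. If $t \in \Z_{\geq 0}$ and $t \le m-1$, then $q_{1,1} = t-m < 0$. Conversely, if $t\in\Z_{\ge0}$ and $q_{1,1}<0$, then $t<m$, so $t \in [0,m-1]$. If $t \notin \Z_{\geq 0}$, the product is nonzero. So $\xi_q$ is negligible if and only if $t\in\Z_{\ge 0}$ and $q_{1,1}<0$. Here the comparison $q_{1,1}<0$ makes sense because $t\in\Z$ forces $q_{1,1}\in\Z$.

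The main point needing care is the first reduction. I must make sure that negligibility can be tested on basis morphisms $g=\xi_r$ in both orders, and that \cref{L:basic-trace-facts}(2) applies to both compositions after swapping the roles of $\bbeta$ and $\bdelta$. I must also confirm that $m$ is the same quantity for $q$ and $q^{*}$. The rest is direct.
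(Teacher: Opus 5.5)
Your proposal is correct and follows the same route as the paper: use \cref{L:basic-trace-facts} to reduce negligibility to the vanishing of $t(t-1)\dotsb(t-m+1)$, then rewrite $t\in\{0,\dots,m-1\}$ as $t\in\Z_{\geq 0}$ with $q_{1,1}=t-m<0$. Your extra steps spell out what the paper's two-line proof leaves implicit: testing only on basis morphisms, handling both composition orders, and noting that $m$ is unchanged under transposition. The check of the second order is automatic anyway, since the categorical trace is cyclic.
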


\begin{proof} Let $\ell =\sum_{(i,j) \in [1,n]^{2}\backslash \{(1,1) \}} q_{i,j}- \sum_{j \in [2,n]} q_{1,j}$ and $m = \sum_{(i,j) \in [1,n]^{2}\backslash \{(1,1) \}} q_{i,j}$.  It follows from \cref{L:basic-trace-facts} that the morphism $\xi_{q}$ is negligible if and only if $t$ is a root of \cref{E:trace-calculation}.  That is,
\[
t=0,1,\dotsc , m-1.
\] Or, equivalently, if and only if $t$ is a nonnegative integer and $t-m = q_{1,1}$ is negative. 
\end{proof}

\subsection{ \texorpdfstring{$\SS(n,d)$}{SS(n,d)} and  \texorpdfstring{$S(n,d)$}{S(n,d)}}\label{SS:SSnd-and-Snd}

Throughout this section fix $n \in \N  \cup \{\infty \}$, $d\in \Z_{\geq 0}$, and assume that $t=d$.   Recall from \cref{R:finite-inside-of-t-shifted} that $\Lambda(n,d)$ is a subset of $\Lambda(n)_{d}$,  $\Mat(n,d)$ is a subset of $\MMat_{n}(d)$, and $\A(n, \qrs)$ is a subset of $\AA_{n}(\qrs)_{d}$.

Let $\Idealnd$ be the two-sided ideal in $\SS(n,d)$ of negligible morphisms.  It follows from \cref{L:basic-trace-facts} that $\Idealnd$ is the $\k$-span of the set
\begin{equation*}
K(n,d):=\left\{\xi_{q} \mid q = (q_{i,j}) \in \MMat_{n}(d), q_{1,1} < 0 \right\}.
\end{equation*}

\begin{theorem}\label{T:SSmodIdeal-is-isomorphic-to-Snd}  Let $n \in \N \cup \{\infty \}$ and $d \in \Z_{\geq 0}$.  Then 
\[
\SS(n,d)/\Idealnd \cong S(n,d)
\] as $\k$-algebras.
\end{theorem}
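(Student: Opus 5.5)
Let $\pi:\SS(n,d)\to S(n,d)$ be the $\k$-linear map defined on the double coset basis of \cref{P:Snt-Basis-Theorem} by
\[
\xi_{q}\mapsto \begin{cases} \xi_{q}, & q\in \Mat(n,d),\ \text{i.e. } q_{1,1}\geq 0 \text{ and } q\in\Mat(n,d,\bdelta,\bbeta) \text{ with } \bbeta,\bdelta\in\Lambda(n,d),\\ 0, & \text{otherwise}.\end{cases}
\]
The plan is to show that $\pi$ is a surjective algebra homomorphism with kernel $\Idealnd$. Surjectivity is immediate, because the image contains the basis $\{\xi_q\mid q\in\Mat(n,d)\}$ of $S(n,d)$ from \cref{E:Double-Coset-Basis}. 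For the kernel, note that if $\bbeta\in\Lambda(n)_{d}$ has $\beta_{1}<0$, then every $q\in\MMat_n(\bdelta,\bbeta)_d$ satisfies $q_{1,1}\leq\beta_{1}<0$, since the other entries of the first column are nonnegative. The same holds for $\bdelta$ using the first row. So $q\notin\Mat(n,d)$ exactly when $q_{1,1}<0$, which means $\ker\pi$ is the span of $K(n,d)$, namely $\Idealnd$ by \cref{L:basic-trace-facts} and \cref{C:Semisimplification-and-finite-Snd}.

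It remains to check multiplicativity, $\pi(\xi_q\xi_r)=\pi(\xi_q)\pi(\xi_r)$. By \cref{T:product-formula} and \cref{T:classical-product}, this comes down to two facts. First, for $q,r,s\in\Mat(n,d)$ we need $C^s_{q,r}(d)=c^s_{q,r}$. Second, if $q\notin\Mat(n,d)$ or $r\notin\Mat(n,d)$, then $C^s_{q,r}(d)=0$ for every $s\in\Mat(n,d)$. The second fact is automatic: $\Idealnd$ is a two-sided ideal because negligible morphisms form a tensor ideal, so $\xi_q\xi_r\in\Idealnd$, and the coefficients of $\xi_q\xi_r$ on the $\xi_s$ with $s\in\Mat(n,d)$ vanish.

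For the first fact, compare $\AA_n(\qrs)_d$ with $\A(n,\qrs)$, which by \cref{R:finite-inside-of-t-shifted} is the subset with $A_{1,1,1}\geq 0$. On that subset $\VV_A(d)=v_A$, since the formulas \cref{E:VA-definition-with-t} and \cref{E:VA-def} agree. So it suffices to show that $\VV_A(d)=0$ whenever $A_{1,1,1}<0$. The tool is the integer-valued form \cref{E:Integer-valued-VAT}, where the only $t$-dependent factor is
\[
\binom{d-\ell}{d-m,A_{1,2,1},\dots,A_{1,n,1}},
\qquad\text{with numerator}\qquad (d-\ell)(d-\ell-1)\cdots(d-m+1).
\]
Here $d-m=A_{1,1,1}<0$ and $d-\ell=\rho_{1,1}=s_{1,1}\geq 0$, because $s\in\Mat(n,d)$. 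The numerator therefore runs from a nonnegative integer down to a negative integer, so it contains the factor $0$ and $\VV_A(d)=0$.

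The main subtlety I expect is justifying that \cref{E:Integer-valued-VAT} really is the correct value of $\VV_A(t)$ at $t=d$ when $A_{1,1,1}<0$, because the factorial expression in \cref{E:VA-definition-with-t} is ill-defined there. The paper's convention \cref{E:multinom-def} interprets $\VV_A(t)$ as the polynomial formula, and \cref{L:measure-computation} computes $\mu_t(\mathcal{O}_A)$ by exactly that polynomial, so I would cite it. Finally, the case $n=\infty$ goes through the same way using \cref{R:Classical-Schur-Algebra-when-n-is-infinity}.
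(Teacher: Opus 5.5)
Your proposal is correct and follows essentially the same route as the paper. The key step is identical: split the arrays contributing to $C^{s}_{q,r}(d)$ by the sign of $A_{1,1,1}$. When $A_{1,1,1}<0$, the falling factorial starts at $s_{1,1}\geq 0$ and runs past zero, so those terms vanish; the remaining arrays match $v_A$. The only difference is packaging. You build a surjection $\SS(n,d)\to S(n,d)$ with kernel $\Idealnd$, handling products with $q_{1,1}<0$ via the ideal property, whereas the paper writes down the linear isomorphism on the quotient directly. Your extra checks, such as that $q_{1,1}\geq 0$ forces $q\in\Mat(n,d)$, simply fill in details the paper leaves implicit.
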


\begin{proof} By construction, $\SS(n,d)/\Idealnd$ has a basis indexed by the set 
\[
M:=\left\{q = (q_{i,j}) \in \MMat_{n}(d) \mid  q_{1,1} \geq 0 \right\}.
\]
  However, this set coincides with $\Mat(n,d)$ and so also indexes the basis for $S(n,d)$.  Define a vector space isomorphism
\[
\SS(n,d)/\Idealnd \to S(n,d) 
\] by $\xi_{q} + \Idealnd \mapsto \xi_{q}$ for all $q \in M$.

Let $s \in \Mat(n,d)$, so $s_{1,1} \geq 0$.  The set $\AA_{n}(q, r, s)_{d}$ splits into arrays $A = (A_{i,j,k})$ of two types: those with $A_{1,1,1} < 0$ and those with $A_{1,1,1} \geq 0$.  For arrays with $A_{1,1,1} < 0$, the leading factor of the formula given in \cref{E:Integer-valued-VAT} is the falling factorial $(d-\ell)(d-\ell-1)\dotsb(d-m+1)$ with $d-\ell = s_{1,1} \geq 0 > A_{1,1,1} = d-m$; the product passes through zero and so $\VV_{A}(d) = 0$.  On the other hand, the subset of arrays with $A_{1,1,1} \geq 0$  identifies with $\A(n, q, r, s)$ and, moreover, $\VV_{A}(d)=v_{A}$.  These observations imply that 
\[
C_{q,r}^{s}(d) = \sum_{\substack{A \in \AA_{n}(q, r, s)_{d} \\ A_{1,1,1} \geq 0}} \VV_{A}(d) = \sum_{A \in \A(n, \qrs)} v_{A} = c_{q,r}^{s}.
\]  Therefore, this map is also a $\k$-algebra isomorphism.
\end{proof}
To establish notation used in the sequel, for a characteristic zero field $\k$ and $n,d \in \N$, let 
\[
f_{n,d}=f_{n,d}^{\k}: \SS_{\k }(n,d) \to S_{\k }(n,d)
\] be the surjective $\k$-algebra homomorphism given by $\xi_{q} \mapsto \xi_{q}$ if $q_{1,1} \geq 0$ and $\xi_{q} \mapsto 0$ if $q_{1,1} < 0$.

\begin{remark}\label{R:Integral-quotient}  Note that \cref{T:SSmodIdeal-is-isomorphic-to-Snd}  holds integrally.  Namely, since $C_{q,r}^{s}(T)$ is an integer-valued polynomial, $C_{q,r}^{s}(d)$ is an integer when $d \in \Z$ and the $\Z$-span of the basis $\left\{\xi_{q} \mid q \in \MMat_{n}(d) \right\}$ defines an integral form $\SS_{\Z}(n,d)$.  Likewise, the structure constants given in \cref{T:classical-product} are integral and the $\Z$-span of $\left\{\xi_{q} \mid q \in \Mat(n,d) \right\}$ defines an integral form  $S_{\Z}(n,d)$.  The restriction of $f_{n,d}^{\Q}$ defines a surjective map from $\SS_{\Z}(n,d)$ to $S_{\Z}(n,d)$ which has kernel given by the $\Z$-span of $K(n,d)$.
\end{remark}

\subsection{The algebra  \texorpdfstring{$\SS_{\IntegerValuedRing}(n,T)$}{SS(n,T)}}\label{SS:naive-Z<T>-form}

Consider $\k = \Q (T)$ and set $t=T$.  Let 
\[
\SS_{\IntegerValuedRing}(n,T) \subseteq \SS_{\Q(T)}(n,T)
\]
denote the free $\IntegerValuedRing$-submodule of $\SS_{\Q(T)}(n,T)$ with basis $\left\{\xi_{q} \mid q \in \MMat_{n}(T) \right\}$.  Since the structure constants $C_{q,r}^{s}(T)$ lie in $\IntegerValuedRing$ for any $\qrs \in \MMat_{n}(T)$, $\SS_{\IntegerValuedRing}(n,T)$ is a $\IntegerValuedRing$-form in $\SS_{\Q  (T)}(n,T)$.

Given a characteristic zero field $\k$ and $t \in \k$, there is a ring homomorphism given by evaluation:
\begin{align*}
\ev_t:\SS_{\Q[T] }(n,T)&\to \SS_{\k}(n,t),\\
\sum_{q\in \MMat_{n}(T)} p_{q}(T)\xi_{q}&\mapsto \sum_{q(t)\in \MMat_{n}(t)} p_q(t)\xi_{q(t)}.
\end{align*}

\begin{remark}\label{R:integral-evaluation-plus-quotient}  When $t=d$ is a nonnegative integer and $\k$ is a characteristic zero field, write $F_{n,d}=F_{n,d}^{\k }$ for the composite

\[
\SS_{\Q [T] }(n,T)\xrightarrow{\ev_{d}} \SS_{\k}(n,d) \xrightarrow{f^{\k}_{n,d}} S_{\k}(n,d).
\]

It will be useful for the sequel to record the fact that the family of maps $\left\{F_{n,d} \right\}_{d \in \N}$ is asymptotically injective. That is, for any fixed nonzero $x \in \SS_{\Q[T]}(n,T)$, $F_{n,d}(x)$ will be nonzero for all sufficiently large $d$.
\end{remark}

\section{Codeterminant Basis for  \texorpdfstring{$\SS_{\IntegerValuedRing}(n,T)$}{SS(n,T)}}\label{S:Codeterminant-Basis-for-SnT}

In this section we establish bases for $\SS_{\IntegerValuedRing}(n,T)$ and $\SS_{\k}(n,t)$ that parallel Green's codeterminant bases for $S_{\Z}(n,d)$ and $S_{\k}(n,d)$ discussed in \cref{SS:Finite-Codeterminant-Basis}.

\subsection{Codeterminant Index Sets with a  \texorpdfstring{$t$}{t}}\label{SS:Codeterminant-Index-Sets}

For $\blambda \in \Lambda^{+}(n)_{t}$ and $\bbeta, \bdelta \in \Lambda(n)_{t}$, let
\[
X(\blambda, \bbeta)_{t} = \left\{\xi_{r} \mid r  \in \MMat_{n}(\blambda , \bbeta )_{t} \text{ and $r$ is semistandard} \right\}
\] and
\[
X(\blambda)_{t} = \bigcup_{\bbeta \in \Lambda (n)_{t}} X(\blambda, \bbeta)_{t}. 
\] Likewise, let
\[
Y(\bdelta, \blambda)_{t} = \left\{\xi_{q} \mid q  \in \MMat_{n}(\bdelta , \blambda )_{t} \text{ and $q^{*}$ is semistandard} \right\}
\]
and 
\[
Y(\blambda)_{t} = \bigcup_{\bdelta \in \Lambda (n)_{t}} Y(\bdelta, \blambda)_{t}. 
\]

\subsection{Codeterminant Basis for  \texorpdfstring{$\SS_{\IntegerValuedRing}(n,T)$}{SS(n,T)}}\label{SS:codeterminant-basis-for-SnT}

\begin{theorem}\label{T:codeterminant-basis-Snt} Let $\k = \Q (T)$ and fix $t=T$.   Let $n \in \N \cup \{\infty \}$.  For each $\bbeta, \bdelta \in \Lambda(n)_{T}$, the set
\[
\CoDetBasis(\bdelta, \bbeta)_{T}=\left\{ \xi_{q}\xi_{r} \left|   (\xi_{q},\xi_{r})\in \bigcup_{\blambda\in\Lambda^{+}(n)_{T}}Y(\bdelta,\blambda)_{T}\times X(\blambda,\bbeta)_{T}\right. \right\}
\]
is a $\Q (T)$-basis for $1_{\bdelta}\SS_{\Q (T)}(n,T)1_{\bbeta}$. 
\end{theorem}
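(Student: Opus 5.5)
The plan is to reduce to Green's theorem for $S(n,d)$ with $d \gg 0$, using the asymptotic injectivity of the maps $F_{n,d}$ from \cref{R:integral-evaluation-plus-quotient}. Fix $\bbeta, \bdelta \in \Lambda(n)_{T}$.

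\textbf{Step 1: counting.} First we check that $|\CoDetBasis(\bdelta,\bbeta)_{T}| = |\MMat_{n}(\bdelta,\bbeta)_{T}|$, which is $\dim 1_{\bdelta}\SS(n,T)1_{\bbeta}$ by \cref{P:Snt-Basis-Theorem}. Only finitely many $\blambda \in \Lambda^{+}(n)_{T}$ contribute: a semistandard $r\in\MMat_n(\blambda,\bbeta)_T$ is upper triangular, so $\blambda \succeq \bbeta$ in dominance order, and there are finitely many such $\blambda$ with $\lambda_2\ge\dotsb\ge\lambda_n\ge 0$. The sets $X(\blambda,\bbeta)_T$ and $Y(\bdelta,\blambda)_T$ are finite.

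By \cref{L:right-codeterminant-are-standard} and \cref{R:Evaluation-at-large-d-is-a-bijection}, for $d\gg0$ the translation $r\mapsto r(d)$ gives bijections $X(\blambda,\bbeta)_T \to X(\blambda(d),\bbeta(d))$. Similarly $Y(\bdelta,\blambda)_T\to Y(\bdelta(d),\blambda(d))$, and $\blambda\mapsto\blambda(d)$ is a bijection onto the relevant partitions in $\Lambda^{+}(n,d)$. Here $d$ must be large enough to force $\lambda_1(d)\ge\lambda_2$, and also large enough that every partition of $d$ dominating $\bbeta(d)$ comes from some $\blambda$.

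Green's theorem then gives $|\CB(n,d,\bdelta(d),\bbeta(d))| = |\Mat(n,d,\bdelta(d),\bbeta(d))|$. This equals $|\MMat_n(\bdelta,\bbeta)_T|$ for $d\gg0$, again by \cref{R:Evaluation-at-large-d-is-a-bijection}. One subtlety is that distinct pairs $(\xi_q,\xi_r)$ must give distinct indices. I will get this for free from the linear independence in Step 2.

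\textbf{Step 2: linear independence.} All $\xi_q\xi_r$ lie in $\SS_{\Q[T]}(n,T)$, since the structure constants are polynomials. Consider a nontrivial relation $\sum c_{q,r}\,\xi_q\xi_r=0$ with $c_{q,r}\in\Q(T)$. Clearing denominators, we may take $c_{q,r}\in\Q[T]$, not all zero. Apply $F_{n,d}=f_{n,d}\circ \ev_d$, which is an algebra map.

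Because $q(d)$ and $r(d)$ have nonnegative $(1,1)$ entries for $d\gg0$, we have $F_{n,d}(\xi_q)=\xi_{q(d)}$ and $F_{n,d}(\xi_r)=\xi_{r(d)}$. So $F_{n,d}(\xi_q\xi_r)$ is exactly Green's codeterminant $\xi_{q(d)}\xi_{r(d)}$. Choosing $d$ large and avoiding the finitely many roots of the nonzero $c_{q,r}$ yields a nontrivial relation among distinct elements of Green's basis $\CB(n,d,\bdelta(d),\bbeta(d))$, which is a contradiction. This also shows the elements $\xi_q\xi_r$ are pairwise distinct, completing the count.

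Combining the count with independence gives a basis. The main obstacle is the bookkeeping in Step 1: verifying that semistandardness is compatible with translation uniformly for all relevant $\blambda$, and that $d$ can be chosen so the finite correspondence $\blambda\leftrightarrow\blambda(d)$ is exactly onto the partitions appearing in Green's index set. For $n=\infty$ I would reduce to finite $n$, since only finitely many indices are nonzero for fixed $\bbeta,\bdelta$, or else cite \cref{R:Classical-Schur-Algebra-when-n-is-infinity}.
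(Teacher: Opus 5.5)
Your proposal is correct and follows essentially the same route as the paper. For $d \gg 0$ you apply $F_{n,d}$, match the index set of $\CoDetBasis(\bdelta,\bbeta)_{T}$ bijectively with Green's index set to get the dimension count, and deduce linear independence from Green's basis after clearing denominators. Your explicit arguments for the finiteness of the contributing $\blambda$ and for the distinctness of the products $\xi_{q}\xi_{r}$ fill in points the paper leaves implicit.
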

\begin{proof}  Recall from \cref{R:integral-evaluation-plus-quotient} that there is a ring homomorphism: 
\[
F_{n,d}^{\Q}: \SS_{\Q[T] }(n,T) \to S_{\Q}(n,d).
\]  Recalling that $F_{n,d}^{\Q}(\xi_{q})$ is zero if $q(d)_{1,1} < 0$ and is otherwise the basis element of $S_{\Q }(n,d)$ indexed by $q(d) \in \Mat(n,d)$, we abuse notation for the duration of the proof by writing $\xi_{q(d)} \in S_{\Q}(n,d)$ for $F_{n,d}^{\Q}(\xi_{q})$. 

For a fixed $\bbeta, \bdelta \in \Lambda(n)_{T}$, the sets  
\[
\bigcup_{\blambda\in\Lambda^{+}(n)_{T}}Y(\bdelta,\blambda)_{T}\times X(\blambda,\bbeta)_{T}
\]
and 
\[
\MMat_{n}(\bdelta, \bbeta)_{T}
\]
are finite.  Therefore, as discussed in \cref{R:Evaluation-at-large-d-is-a-bijection}, for all sufficiently large $d$ it is true that:
\begin{enumerate} 
\item the map
\[
\bigcup_{\blambda\in\Lambda^{+}(n)_{T}}Y(\bdelta,\blambda)_{T}\times X(\blambda,\bbeta)_{T} \to \bigcup_{\blambda\in\Lambda^{+}(n,d)}Y(\bdelta(d),\blambda(d))\times X(\blambda(d),\bbeta(d)) 
\] given by $(\xi_{q},\xi_{r}) \mapsto (\xi_{q(d)}, \xi_{r(d)})$ is a bijection;
\item the map
\[
\MMat_{n}(\bdelta, \bbeta)_{T} \to \Mat(n, d, \bdelta(d), \bbeta(d))
\] given by $q \mapsto q(d)$ is a bijection;
\item the filling of the Young diagram $Y^{\blambda(d)}$ by $r(d)$ and by $q^{*}(d)$ are both semistandard for all $(\xi_{q}, \xi_{r})$ in $\bigcup_{\blambda\in\Lambda^{+}(n)_{T}}Y(\bdelta,\blambda)_{T}\times X(\blambda,\bbeta)_{T}$.
\end{enumerate}
Under those conditions, 
\begin{align*}
\left|   \bigcup_{\blambda\in\Lambda^{+}(n)_{T}}Y(\bdelta,\blambda)_{T}\times X(\blambda,\bbeta)_{T} \right| & = \left| \bigcup_{\blambda\in\Lambda^{+}(n,d)}Y(\bdelta(d),\blambda(d))\times X(\blambda(d),\bbeta(d)) \right|, \\
\left| \MMat_{n}(\bdelta, \bbeta)_{T} \right| & =\left|  \Mat(n, d,  \bdelta(d), \bbeta(d)) \right|,
\end{align*}
 and the set
\[
\CB(n, d, \bdelta(d), \bbeta(d)) = \left\{ \xi_{q(d)}\xi_{r(d)} \left|   (\xi_{q(d)},\xi_{r(d)})\in \bigcup_{\blambda(d)\in\Lambda^{+}(n,d)}Y(\bdelta(d),\blambda(d))\times X(\blambda(d),\bbeta(d))\right. \right\}
\] is Green's codeterminant basis for $\Hom_{\Q \fS_{d}}(M^{\bbeta(d)}, M^{\bdelta(d)}) = 1_{\bdelta (d)}S_{\Q}(n,d)1_{\bbeta (d)}$.  

To show linear independence, let $\xi_{q_{1}}\xi_{r_{1}},\ldots,\xi_{q_{p}}\xi_{r_{p}}$ be a finite collection of distinct elements from $\CoDetBasis(\bdelta, \bbeta)_{T}$ and say 
\begin{equation}\label{E:A-linear-dependency-of-codeterminants}
\sum_{i=1}^{p} r_i(T)\xi_{q_{i}}\xi_{r_{i}}=0,
\end{equation}
where $r_i(T)=a_i(T)/b_i(T)$ for some $a_{i}(T), b_{i}(T) \in \Q [T]$ with $b_{i}(T) \neq 0$. By clearing denominators we may assume without loss of generality that $r_{i}(T) \in \Q [T]$ for all $i \in [1,p]$ and, hence, \cref{E:A-linear-dependency-of-codeterminants} lies in $\SS_{\Q [T]}(n,T)$. Applying $F_{n,d}^{\Q}$ to \cref{E:A-linear-dependency-of-codeterminants} yields 
\[
\sum_{i=1}^{p} r_i(d)\xi_{q_{i}(d)}\xi_{r_{i}(d)}=0,
\] in $S_{\Q}(n,d)$ for all $d \gg 0$.  However, $\xi_{q_{i}(d)}\xi_{r_{i}(d)}$ are distinct elements of Green's codeterminant basis. Therefore, $r_{i}(d)=0$ for all $i \in [1,p]$ and all $d \gg 0$. Thus, $r_i(T)=0$ for all $i \in [1,p]$, as required.

For spanning, since $1_{\bdelta}\SS_{\k}(n,T)1_{\bbeta}$ is finite-dimensional, it suffices to observe that for $d \gg 0$ we have:
\begin{align*}
             \dim_{\k} \left(1_{\bdelta}\SS_{\k }(n,T)1_{\bbeta} \right)         & = |\MMat_{n}(\bdelta, \bbeta)_{T}| \\
                      & = |\Mat(n, d, \bdelta(d), \bbeta(d))| \\
		      &= \dim_{\Q}\left( 1_{\bdelta (d)}S_{\Q}(n,d)1_{\bbeta (d)}\right)\\
                      & = |\CB (n, d, \bdelta(d), \bbeta(d))| \\
                      & = | \CoDetBasis (\bdelta, \bbeta)_{T}|.
\end{align*} 
\end{proof} 
\begin{remark}
While Green stated his results for the case when $n$ is finite, his and our arguments go through unchanged if $n=\infty$. 
\end{remark}

\begin{corollary}\label{C:Codeterminant-basis-for-SnT}  The set 
\begin{equation*}
\CoDetBasis(n)_{T} = \bigcup_{\blambda \in \Lambda^{+}(n)_{T}}\left\{\xi_{q}\xi_{r} \left| (\xi_{q}, \xi_{r}) \in Y(\blambda)_{T} \times X(\blambda)_{T}  \right. \right\}
\end{equation*}
 is a basis for $\SS_{\Q (T)}(n,T)$.
\end{corollary}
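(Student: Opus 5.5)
This follows from \cref{T:codeterminant-basis-Snt} together with the idempotent decomposition of $\SS_{\Q(T)}(n,T)$. By \cref{C:Identity-Morphisms}, the elements $\{1_{\bbeta} \mid \bbeta \in \Lambda(n)_{T}\}$ are pairwise orthogonal idempotents. By \cref{E:Interpolating-Schur-Algebra-Definition} and \cref{P:Snt-Basis-Theorem}, the algebra decomposes as a vector space as
\[
\SS_{\Q(T)}(n,T) = \bigoplus_{\bbeta, \bdelta \in \Lambda(n)_{T}} 1_{\bdelta}\SS_{\Q(T)}(n,T)1_{\bbeta},
\]
where the summand indexed by $(\bdelta,\bbeta)$ is $\Hom_{\Perm(\fS_{\infty};\mu_{T})}(\MM^{\bbeta},\MM^{\bdelta})$. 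A union of bases of the summands of a direct sum is a basis of the whole space, so it suffices to identify $\CoDetBasis(n)_{T}$ with $\bigcup_{\bbeta,\bdelta}\CoDetBasis(\bdelta,\bbeta)_{T}$.

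The only real point is how $\CoDetBasis(n)_{T}$ is indexed. An arbitrary pair $(\xi_{q},\xi_{r}) \in Y(\blambda)_{T}\times X(\blambda)_{T}$ has $\xi_{q}\in Y(\bdelta,\blambda)_{T}$ for a unique $\bdelta$ and $\xi_{r}\in X(\blambda,\bbeta)_{T}$ for a unique $\bbeta$. Since $\xi_{q}\in 1_{\bdelta}\SS 1_{\blambda}$ and $\xi_{r}\in 1_{\blambda}\SS 1_{\bbeta}$, the product $\xi_{q}\xi_{r}$ lies in $1_{\bdelta}\SS 1_{\bbeta}$ and belongs to $\CoDetBasis(\bdelta,\bbeta)_{T}$. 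Conversely, every element of $\CoDetBasis(\bdelta,\bbeta)_{T}$ arises this way. The codomain $\blambda$ of $\xi_r$ matches the domain of $\xi_q$ because both are indexed by the same $\blambda$, so no products vanish for mismatched idempotents.

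Hence $\CoDetBasis(n)_{T}=\bigsqcup_{\bbeta,\bdelta}\CoDetBasis(\bdelta,\bbeta)_{T}$. The pieces are disjoint because they lie in distinct summands and are nonzero, each being linearly independent by \cref{T:codeterminant-basis-Snt}. Linear independence of the union then follows from directness of the sum, and spanning follows from each piece spanning its summand. There is no real obstacle; the only care needed is this bookkeeping of idempotents, including the case $n=\infty$, where the direct sum is infinite but every element has finite support.
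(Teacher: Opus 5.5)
Your proposal is correct and matches the paper, which states this corollary without a separate proof because it follows directly from \cref{T:codeterminant-basis-Snt}. You take the union over $\bbeta,\bdelta\in\Lambda(n)_{T}$ of the bases $\CoDetBasis(\bdelta,\bbeta)_{T}$ of the summands $1_{\bdelta}\SS_{\Q(T)}(n,T)1_{\bbeta}$, using $Y(\blambda)_{T}\times X(\blambda)_{T}=\bigcup_{\bdelta,\bbeta}Y(\bdelta,\blambda)_{T}\times X(\blambda,\bbeta)_{T}$, exactly as intended.
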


The next result shows that, perhaps surprisingly, the change of basis matrix from $\CoDetBasis (n)_{T}$ to $\StdBasis(n)_{T}$ is defined over the integers.

\begin{proposition}\label{P:codeterminants-are-integral-combos-of-indicator-functions}  Let $\k= \Q (T)$.  Let $n \in \N  \cup \{\infty \}$. Say $\xi_{q}\xi_{r} \in \CoDetBasis (\bdelta, \bbeta)_{T}$.  If one writes
\[
\xi_{q}\xi_{r} = \sum_{s \in \MMat_{n}(\bdelta, \bbeta)_{T}} C_{q,r}^{s}(T) \xi_{s}
\] in $\SS_{\k}(n,T)$, then $C_{q,r}^{s}(T) \in \Z$ for all $s \in \MMat_{n}(\bdelta, \bbeta)_{T}$.
\end{proposition}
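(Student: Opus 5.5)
The structure constant $C_{q,r}^{s}(T)=\sum_{A\in\AA_{n}(q,r,s)_{T}}\VV_{A}(T)$ is, by \cref{T:product-formula}, a polynomial in $T$. By \cref{E:Integer-valued-VAT} each $\VV_{A}(T)$ factors as a product of ordinary multinomial coefficients (integers) and a single factor
\[
\binom{T-\ell}{T-m,A_{1,2,1},\dotsc ,A_{1,n,1}},
\]
which is nonconstant exactly when $m>\ell$, i.e. when $\sum_{j\in[2,n]}A_{1,j,1}>0$. So it suffices to show that every array $A\in\AA_{n}(q,r,s)_{T}$ has $A_{1,j,1}=0$ for all $j\in[2,n]$ whenever $q^{*}$ and $r$ are semistandard. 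Then each $\VV_{A}(T)$ is a product of integer multinomial coefficients, and so is the sum.

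To prove this vanishing, I will use the constraint $\sum_{k}A_{1,j,k}=q_{1,j}$. By the discussion after \cref{L:right-codeterminant-are-standard}, a semistandard matrix is upper triangular. Since $q^{*}$ is semistandard, $q^{*}$ is upper triangular, so $q$ is lower triangular. Hence $q_{1,j}=0$ for $j\ge 2$. All entries $A_{1,j,k}$ with $j\geq 2$ are nonnegative integers, so $A_{1,j,k}=0$ for all $k$ and all $j\ge 2$, in particular $A_{1,j,1}=0$. (Alternatively one can use $r$: its upper triangularity gives $r_{j,1}=0$ for $j\ge2$, hence $A_{i,j,1}=0$ for $j\geq 2$, which is even stronger.)

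With $A_{1,j,1}=0$ for $j\ge 2$ we get $\ell=m$, so the leading factor equals $1$. The remaining factors $\binom{\rho_{i,k}}{A_{i,1,k},\dotsc,A_{i,n,k}}$ with $(i,k)\neq(1,1)$ involve only nonnegative integers, so they are integers. Therefore $C_{q,r}^{s}(T)\in\Z_{\geq 0}$.

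The main point to check is bookkeeping: that the only $T$-dependence in \cref{E:Integer-valued-VAT} is the first factor, i.e. that $\rho_{i,k}$ for $(i,k)\ne(1,1)$ and all $A_{i,j,k}$ with $(i,j,k)\neq(1,1,1)$ are genuine integers independent of $T$. This holds by the definition of $\AA_{n}(q,r,s)_{T}$. I should also confirm that the factor $\binom{\rho_{1,1}}{A_{1,1,1},\dotsc}$ in \cref{E:VA-definition-with-t} was correctly absorbed into the leading factor there, which is how \cref{E:Integer-valued-VAT} is written. The argument is otherwise immediate, and only the triangularity of one of $q,r$ is actually used.
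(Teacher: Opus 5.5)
Your proposal is correct and is essentially the paper's own argument. Semistandardness of $q^{*}$ makes $q$ lower triangular, which forces $A_{1,j,1}=0$ for $j\in[2,n]$; hence $\ell=m$, the leading factor of $\VV_{A}(T)$ in \cref{E:Integer-valued-VAT} equals $1$, and every $\VV_{A}(T)$ is an integer. Your alternative route through the upper triangularity of $r$ also works; it is the same mechanism the paper uses in the proof of \cref{P:Borel-Subalgebra-maps}.
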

\begin{proof}
Since $\xi_{q} \in Y(\bdelta, \blambda)_{T}$, the discussion after \cref{E:right-codeterminant-condition} shows that $q$ is lower triangular. Let $s \in \MMat_{n}(\bdelta, \bbeta)_{T}$ and let $A$ be an array in $\AA_{n}(\qrs)_{T}$.  Since $\sum_{k \in [1,n]} A_{i,j,k} = q_{i,j}=0$ for $i<j$, this implies $A_{i,j,k}=0$ for all $k \in [1,n]$ as long as $i<j$.  In particular, $A_{1,j,1}=0$ for $j \in [2,n]$.  But then $\sum_{j \in [2,n]} A_{1,j,1} = 0$ and $\ell = m$, so the leading binomial $\binom{T-\ell}{T-m, 0, \dots, 0}$ in the formula for $\VV_{A}(T)$ given in \cref{E:Integer-valued-VAT} equals one.  Hence,  $\VV_{A}(T)$ is an integer.  This fact along with \cref{T:product-formula} proves the claim.
\end{proof}

 Let
\begin{equation}\label{E:TnT-definition}
\TT_{\IntegerValuedRing}(n,T) \subseteq \SS_{\Q (T)}(n,T)
\end{equation} be the $\IntegerValuedRing$-span of $\CoDetBasis(n)_{T}$.   The next result shows this is  a $\IntegerValuedRing$-form for $\SS_{\Q (T)}(n,T)$.

\begin{proposition}\label{P:TnT-is-an-integral-form} For all $n \in \N \cup \{\infty \}$, $\TT_{\IntegerValuedRing}(n,T)$ is a $\IntegerValuedRing$-algebra.
\end{proposition}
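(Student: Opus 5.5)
The plan is to compare the two $\IntegerValuedRing$-lattices $\TT_{\IntegerValuedRing}(n,T)$ and $\SS_{\IntegerValuedRing}(n,T)$ and show they coincide. Since $\SS_{\IntegerValuedRing}(n,T)$ is already known to be closed under multiplication (its structure constants $C_{q,r}^{s}(T)$ lie in $\IntegerValuedRing$ by \cref{R:VAT-is-an-Integer-Valued-Polynomial} and \cref{T:product-formula}), equality gives the claim at once. The comparison can be done one summand at a time. Fix $\bbeta,\bdelta\in\Lambda(n)_{T}$ and work inside $1_{\bdelta}\SS_{\Q(T)}(n,T)1_{\bbeta}$, which is finite-dimensional. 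It has two bases: $\{\xi_{s}\mid s\in\MMat_{n}(\bdelta,\bbeta)_{T}\}$ and $\CoDetBasis(\bdelta,\bbeta)_{T}$, by \cref{P:Snt-Basis-Theorem} and \cref{T:codeterminant-basis-Snt}.

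First, let $M$ be the square transition matrix expressing each codeterminant $\xi_{q}\xi_{r}\in\CoDetBasis(\bdelta,\bbeta)_{T}$ in the basis $\{\xi_{s}\}$. By \cref{P:codeterminants-are-integral-combos-of-indicator-functions}, $M$ has entries in $\Z$, and these entries are constants, independent of $T$. This already gives $\TT_{\IntegerValuedRing}(n,T)\subseteq\SS_{\IntegerValuedRing}(n,T)$.

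Second, I show $\det M=\pm1$, which gives the reverse inclusion. I specialize via $F^{\Q}_{n,d}$ for $d\gg0$, as in the proof of \cref{T:codeterminant-basis-Snt}. For $d$ large the following hold. Every $s\in\MMat_{n}(\bdelta,\bbeta)_{T}$ has $s(d)_{1,1}\ge0$, so $F^{\Q}_{n,d}(\xi_{s})=\xi_{s(d)}$, and $s\mapsto s(d)$ is a bijection onto $\Mat(n,d,\bdelta(d),\bbeta(d))$. Likewise, the codeterminants map bijectively onto Green's codeterminants $\CB(n,d,\bdelta(d),\bbeta(d))$. Since $F^{\Q}_{n,d}$ is a ring homomorphism, applying it to the expansion $\xi_{q}\xi_{r}=\sum_{s}C^{s}_{q,r}\xi_{s}$ shows that the same integer matrix $M$ (after reindexing) is the transition matrix from Green's codeterminant basis to the double coset basis of $1_{\bdelta(d)}S_{\Q}(n,d)1_{\bbeta(d)}$. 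Green's theorem~\cite{Green-codets} says the codeterminants form a $\Z$-basis of the integral Schur algebra $S_{\Z}(n,d)$, whose $\Z$-basis is also $\{\xi_{q}\}$ (\cref{R:Integral-quotient}). Hence $M$ is invertible over $\Z$, i.e.\ $M^{-1}$ has integer entries.

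Finally, each $\xi_{s}$ is therefore a $\Z$-combination of codeterminants, so $\SS_{\IntegerValuedRing}(n,T)\subseteq\TT_{\IntegerValuedRing}(n,T)$ and the two lattices are equal. Concretely, the product of two codeterminants has coefficients in the basis $\{\xi_{s}\}$ lying in $\IntegerValuedRing$, and multiplying that coefficient vector by $M^{-1}$ keeps them in $\IntegerValuedRing$. The identity idempotents $1_{\bbeta}=\xi_{q_{\bbeta}}$ also lie in $\TT_{\IntegerValuedRing}(n,T)$ by the same equality, so the locally unital structure is inherited.

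The main obstacle is justifying the second step rigorously. One must check that Green's codeterminants, with his integrality statement, are literally the elements $\xi_{q(d)}\xi_{r(d)}$ in our indexing, so that his $\Z$-basis result applies to the same matrix $M$. One must also handle $n=\infty$, where only finitely many compositions of $d$ are involved for a fixed pair $(\bdelta,\bbeta)$, so Green's argument still applies to the relevant summand.
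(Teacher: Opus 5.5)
Your proof is correct, but it is organized differently from the paper's. The paper never compares the two lattices. It expands a product of two codeterminants in the basis $\{\xi_{s}\}$, then rewrites each $\xi_{s}$ in the codeterminant basis. For that rewriting it uses only that the constant integer transition matrix $M$ from \cref{P:codeterminants-are-integral-combos-of-indicator-functions} is invertible over $\Q$, so the resulting coefficients $b_{t,u}(T)$ are known only to lie in $\Q[T]$. It then applies $F^{\Q}_{n,d}$ for all $d\gg 0$ and invokes the integrality of the structure constants of Green's codeterminant basis \cite{Green-codets,Woodcock}. This gives $b_{t,u}(d)\in\Z$ for all large $d$, hence $b_{t,u}(T)\in\IntegerValuedRing$.

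You instead use the stronger form of Green's theorem: the codeterminants are a $\Z$-basis of $S_{\Z}(n,d)$, not merely a basis with integral structure constants. Applied at one sufficiently large $d$, this shows $M$ is invertible over $\Z$, and you then inherit closure from $\SS_{\IntegerValuedRing}(n,T)$. Because $M$ is constant, a single $d$ suffices for you, whereas the paper needs infinitely many $d$ to control the polynomials $b_{t,u}(T)$.

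Your route also gives a strictly stronger conclusion: $\TT_{\IntegerValuedRing}(n,T)=\SS_{\IntegerValuedRing}(n,T)$. In other words, the ``naive'' and the codeterminant integral forms described in the introduction actually coincide, which the paper does not observe and in fact presents as two different forms. The paper's route uses a formally weaker input from Green and establishes only closure under multiplication. Your caveat about $n=\infty$ is handled as in the paper's remark, since a fixed summand $1_{\bdelta}\SS(n,T)1_{\bbeta}$ involves only finitely many indices.
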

\begin{proof} All that needs to be verified is that $\TT_{\IntegerValuedRing}(n,T)$ is closed under multiplication.  Let  $\xi_{q_{1}}\xi_{r_{1}} \in \CoDetBasis (\bgamma,\bdelta)_{T}$ and  $\xi_{q_{2}}\xi_{r_{2}} \in \CoDetBasis (\bdelta,\bbeta)_{T}$.  Since $\TT_{\IntegerValuedRing}(n,T)$ is contained in $\SS_{\IntegerValuedRing}(n,T)$, a $\IntegerValuedRing$-algebra, we have
\begin{equation}\label{E:yet-another-formula}
(\xi_{q_{1}}\xi_{r_{1}})(\xi_{q_{2}}\xi_{r_{2}}) = \sum_{s \in \MMat_{n}(\bgamma, \bbeta)_{T}} a_{s}(T)\xi_{s},
\end{equation}
for $a_{s}(T) \in \IntegerValuedRing$.   \cref{P:codeterminants-are-integral-combos-of-indicator-functions} implies that each $\xi_{s}$ can be written as a $\Q$-linear combination of elements from $\CoDetBasis (\bgamma, \bbeta)_{T}$.  Substituting these combinations into \cref{E:yet-another-formula} yields
\[
(\xi_{q_{1}}\xi_{r_{1}})(\xi_{q_{2}}\xi_{r_{2}}) = \sum_{\xi_{t}\xi_{u} \in \CoDetBasis (\bgamma, \bbeta)_{T}} b_{t,u}(T)\xi_{t}\xi_{u},
\] for $b_{t,u}(T) \in \Q [T]$.  Taking $d \gg 0$ and applying $F_{n,d}^{\Q}$ yields 
\[
(\xi_{q_{1}(d)}\xi_{r_{1}(d)})(\xi_{q_{2}(d)}\xi_{r_{2}(d)}) = \sum_{\xi_{t}\xi_{u} \in \CoDetBasis (\bgamma, \bbeta)_{T}} b_{t,u}(d) \xi_{t(d)}\xi_{u(d)}
\] in $S_{\Q}(n,d)$.

Green's codeterminant basis has integral structure constants \cite{Green-codets,Woodcock}.  That is, $b_{t,u}(d)$ is an integer for all $d \gg 0$ and $b_{t,u}(T) \in \IntegerValuedRing$ for all $\xi_{t}\xi_{u} \in \CoDetBasis (\bgamma, \bbeta)_{T}$, as desired.
\end{proof}

Let $\k$ be a characteristic zero field and fix $t \in \k$.  Then $\k$ is a $\IntegerValuedRing$-module via the evaluation map $\ev_{t}:\IntegerValuedRing \to\k$ given by $\ev_{t}(p(T))=p(t)$.  There are canonical $\k$-algebra isomorphisms 
\[
\SS_{\IntegerValuedRing}(n,T) \otimes_{\IntegerValuedRing} \k \cong \SS_{\k}(n,t)\cong \TT_{\IntegerValuedRing}(n,T) \otimes_{\IntegerValuedRing} \k .
\]  In particular, via these isomorphisms it follows that the set 
\begin{equation}\label{E:Codet-Basis-for-Snt}
\CoDetBasis (n)_{t}=\left\{ \xi_{q}\xi_{r} \left| (\xi_{q}, \xi_{r}) \in \cup_{\blambda \in \Lambda^{+}(n)_{t}} Y(\blambda)_{t} \times X(\blambda)_{t} \right. \right\}
\end{equation}
defines a $\k$-basis for $\SS_{\k}(n,t)$.

\section{Representation Theory of \texorpdfstring{$\SS (n,t)$}{SS(n,t)}}\label{S:Snt-is-quasi-hereditary}

In this section we establish that $\SS (n,t)$ is a symmetrically based quasi-hereditary algebra and, hence, its representations define a highest weight category.

\subsection{ \texorpdfstring{$\SS (n,t)$}{SS(n,t)}-Modules}\label{SS:Snt-rep-theory-notations}  

Let $t \in \k$ and $n \in \N  \cup \{\infty \}$. Recall that $\SS(n, t)$ is a locally unital algebra with distinguished idempotents $\{1_{\bbeta} \mid \bbeta \in \Lambda(n)_{t} \}$. By \cref{P:Snt-Basis-Theorem}, it is \emph{locally finite-dimensional}: $1_{\bdelta}\SS(n,t)1_{\bbeta}$ is a finite-dimensional $\k$-vector space for all $\bbeta, \bdelta \in \Lambda(n)_{t}$.

   By definition, an $\SS(n,t)$-module is a module, $M$, for the associative $\k$-algebra $\SS(n,t)$ which also satisfies $M = \bigoplus_{\bbeta \in \Lambda(n)_{t}} 1_{\bbeta}M$, where the direct sum is a direct sum of $\k$-vector spaces.  We call elements of $M_{\bbeta}:=1_{\bbeta}M$ \emph{weight vectors} of \emph{weight} $\bbeta$ and call $M_{\bbeta}$ a \emph{weight space}.  We call $M$ locally finite-dimensional if every weight space of $M$ is a finite-dimensional $\k$-vector space.  Let $\Sntmod$ denote the category of all $\SS(n,t)$-modules, let $\Sntmodlfd$ denote the full subcategory of locally finite-dimensional $\SS(n,t)$-modules, and let $\Sntmodfg$ denote the full subcategory of finitely generated $\SS(n,t)$-modules.  Note that finitely generated modules for a locally finite-dimensional algebra are always locally finite-dimensional.

There is a contravariant equivalence between right and left locally finite-dimensional $\SS(n,t)$-modules given by the graded dual:  $M \mapsto M^{\circledast} = \bigoplus_{\bbeta \in \Lambda(n)_{t}} M_{\bbeta}^{*}$, where $M_{\bbeta}^{*} = \Hom_{\k}(M_{\bbeta}, \k)$.   There is a contravariant autoequivalence of $\Sntmodlfd$ given by $M \mapsto M^{\vee}$, where $M^{\vee} = M^{\circledast}$ as a $\k$-vector space and the action of $\SS(n,t)$ given by $(a.f)(m) = f(\tau(a)m)$ for all $f \in M^{\vee}$, $a \in \SS(n,t)$, and $m \in M$, where $\tau$ is the involutive anti-automorphism defined in \cref{C:involutive-anti-automorphism-on-Snt}.

\subsection{Highest Weight Combinatorics}\label{SS:Snt-quasi-hereditary-combinatorics}

Recall the dominance partial order on $\Lambda(n)_{t}$ given by \cref{E:dominance-order-definition}.  Call a partially ordered set $(X, \leq )$ \emph{upper finite} if $[x,\infty):=\{y \in X \mid x \leq  y \}$ is a finite set for all $x \in X$.  The following lemma records some of the basic properties of $(\Lambda(n)_{t}, \preceq)$.

\begin{lemma}\label{L:upper-finite-axioms}  The following statements are true:
\begin{enumerate}
\item The partially ordered set $(\Lambda(n)_{t},\preceq)$ is upper finite.
\item For $\blambda \in\Lambda^{+}(n)_{t}$ and $\bbeta \in \Lambda(n)_{t}$, the sets $X(\blambda,\bbeta)_{t}$ and $Y(\bbeta,\blambda)_{t}$ are empty unless $\bbeta\preceq\blambda$.
\item For $\blambda\in\Lambda^{+}(n)_{t}$, $X(\blambda,\blambda)_{t}=Y(\blambda,\blambda)_{t}=\{\xi_{q_{\blambda}}=1_{\blambda}\}$.
\end{enumerate}
\end{lemma}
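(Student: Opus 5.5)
For (1), I will use that every $\bbeta \in \Lambda(n)_{t}$ is determined by $(\beta_{2},\dots,\beta_{n}) \in \Z_{\geq 0}^{n-1}$, since $\beta_{1} = t - \sum_{i\geq 2}\beta_{i}$. Fix $\bbeta$ and suppose $\bbeta \preceq \bdelta$. Applying \cref{E:dominance-order-definition} with $k=n$ gives equality of total sums, both equal to $t$. Applying it with each $k \in [1,n-1]$ and subtracting from the total gives
\[
\sum_{i\in[k+1,n]}\delta_{i} \leq \sum_{i \in [k+1,n]} \beta_{i}.
\]
Taking $k=1$ bounds every $\delta_{i}$ with $i\geq 2$ by the fixed integer $\sum_{i\geq 2}\beta_{i}$, and each $\delta_{i}$ is a nonnegative integer. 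So there are only finitely many choices of $(\delta_{2},\dots,\delta_{n})$, hence finitely many $\bdelta$. When $n=\infty$ the same bound forces $\delta_{i}=0$ for all $i$ beyond the last nonzero index of $\bbeta$, because the tail sums of $\bdelta$ are bounded by those of $\bbeta$. So finiteness still holds.

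For (2), the plan is to pass to $d \gg 0$ and invoke the classical fact, or rather the elementary argument behind it. Take $\xi_{r} \in X(\blambda,\bbeta)_{t}$. By \cref{L:right-codeterminant-are-standard} and the discussion after it, $r$ is upper triangular. Therefore
\[
\sum_{j\in[1,k]} \beta_{j} = \sum_{j \in [1,k]}\sum_{i\in[1,j]} r_{i,j} \leq \sum_{i \in [1,k]} \sum_{j} r_{i,j} = \sum_{i\in[1,k]}\lambda_{i},
\]
where the inequality holds because the extra terms $r_{i,j}$ with $i\leq k<j$ are nonnegative integers. Here $r_{1,1}$ occurs on both sides, so it is harmless even though it lies in $t-\Z$. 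This is exactly $\bbeta \preceq \blambda$.

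For $Y(\bbeta,\blambda)_{t}$, the condition is that $q^{*}$ is semistandard with $q^{*} \in \MMat_{n}(\blambda,\bbeta)_{t}$, so $\xi_{q^{*}} \in X(\blambda,\bbeta)_{t}$ and the same argument applies. The only point needing care is that row and column sums are read correctly under the transpose.

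For (3), when $\bbeta=\blambda$ the inequalities in (2) become equalities for every $k$. This forces $r_{i,j}=0$ for all $i\leq k<j$, for every $k$, so $r$ is diagonal, and its row sums give $r=q_{\blambda}$. Conversely, $q_{\blambda}$ is semistandard: the filling of $Y^{\blambda(d)}$ by $q_{\blambda}(d)$ puts $i$'s in row $i$, which is semistandard, and equivalently \cref{E:right-codeterminant-condition} holds since $\lambda_{x}(d)\le \lambda_{x-1}(d)$. The same reasoning handles $Y$, and \cref{C:Identity-Morphisms} identifies $\xi_{q_{\blambda}}$ with $1_{\blambda}$.

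No step is a real obstacle. The only delicate points are keeping track of the non-integer entry $r_{1,1}$, and the $n=\infty$ case of (1), both of which are addressed above.
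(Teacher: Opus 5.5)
Your proposal is correct. Parts (1) and (2) follow the paper's proof closely. In (1) the paper likewise uses the $k=1$ inequality to bound $\sum_{i\in[2,n]}\lambda_i$. In (2) it likewise uses upper triangularity to compare partial column sums with partial row sums. The only cosmetic difference there is that the paper evaluates at $d\gg 0$, while you work with the $t$-matrix directly and observe that $r_{1,1}$ cancels.

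Two things differ.

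First, for $n=\infty$ the $k=1$ bound alone does not give finiteness. For instance, it is satisfied by $(t-1)\varepsilon_{1}+\varepsilon_{j}$ for every $j\geq 2$ as soon as $\sum_{i\geq 2}\mu_{i}\geq 1$. Your additional tail inequality, taken at the last index where $\bbeta$ is nonzero, therefore supplies a step that the paper's argument omits. Relatedly, equality of the total sums comes from membership in $\Lambda(n)_{t}$, not from the dominance inequality at $k=n$, which does not exist when $n=\infty$.

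Second, for (3) the paper argues with tableaux: the $\lambda(d)_{1}$ ones must fill the first row of $Y^{\blambda(d)}$, the twos the second row, and so on. You instead reuse the inequality chain from (2), whose equality case for every $k$ forces all strictly upper triangular entries to vanish. Given (2), this is shorter and avoids tableaux entirely. You also check explicitly that $q_{\blambda}$ is semistandard, which the paper leaves implicit.
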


\begin{proof}
To prove (1), let $\bmu = (\mu_{1}, \dotsc , \mu_{n}) \in \Lambda(n)_{t}$ and consider $\blambda = (\lambda_{1}, \dotsc , \lambda_{n})\in[\bmu,\infty)$. Then 
\[
\left(t- \sum_{i \in [2,n]} \lambda_{i} \right) - \left(t- \sum_{i \in [2,n]} \mu_{i} \right) = \lambda_{1} - \mu_{1} \geq  0.
\]
Consequently,
\[
\sum_{i \in [2,n]}\mu_{i} \geq    \sum_{i \in [2,n]}\lambda_{i}.
\]
However, $\lambda_{2}, \dotsc , \lambda_{n}$ are nonnegative integers and only finitely many $\blambda\in\Lambda(n)_{t}$  satisfy this condition for a fixed $\bmu$.

To prove (2), consider $q \in X(\blambda, \bbeta)_{t}$ and $d \gg 0$.  Since $q(d)$ is upper triangular, for each $p \in [1,n]$ it is true that
\[
\sum_{j=1}^{p} \beta(d)_{j} = \sum_{i=1}^{n} \sum_{j=1}^{p} q(d)_{i,j}=\sum_{i=1}^{p} \sum_{j=1}^{p} q(d)_{i,j} \leq \sum_{i=1}^{p}\sum_{j=1}^{n} q(d)_{i,j} = \sum_{i=1}^{p} \lambda(d)_{i}.
\]  Therefore $\bbeta \preceq \blambda$, as claimed.  A similar argument applies to $Y(\bbeta, \blambda)_{t}$.

To prove (3), observe that there is one and only one filling of $Y^{\blambda(d)}$ by a $q \in \MMat_{n}(\blambda, \blambda)_{t}$ which is semistandard.  Namely, since $\sum_{i \in [1,n]} q(d)_{i,1} = \blambda(d)_{1}$, a filling of $Y^{\blambda(d)}$ by $q(d)$ will have $\blambda(d)_{1}$ ones. For the filling to be semistandard, they must exactly be used to fill the $\blambda(d)_{1}$ boxes in the first row of $Y^{\blambda(d)}$.  Similarly, $\sum_{i \in [1,n]} q(d)_{i,2} = \blambda(d)_{2}$ and a filling of $Y^{\blambda(d)}$ by $q(d)$ will have $\blambda(d)_{2}$ twos. For the filling to be semistandard, they must exactly be used to fill the $\blambda(d)_{2}$ boxes in the second row of $Y^{\blambda(d)}$.  And so on.  Thus $q = q_{\blambda}$, as claimed.  Likewise, for $q^{*}$.
\end{proof}

\subsection{ \texorpdfstring{$\SS (n,t)$}{SS(n,t)}  is an upper finite symmetrically based quasi-hereditary algebra}\label{SS:Snt-is-based-quasi-hereditary}

\begin{theorem}\label{T:Snt-is-based-qh}  Let $\k$ be a characteristic zero field, fix $t \in \k$, and fix $n \in \N \cup \{\infty \}$.  Consider $\SS(n,t)$ with the anti-involution $\tau$ defined in \cref{C:involutive-anti-automorphism-on-Snt}.  Set $I= \Lambda(n)_{t}$ and $\Lambda = \Lambda^{+}(n)_{t}$, both equipped with the dominance order $\preceq$.  For $\bmu  \in \Lambda(n)_{t}$ and $\blambda \in \Lambda^{+}(n)_{t}$, let $X(\blambda, \bmu )_{t}$, $X(\blambda)_{t}$, $Y(\bmu , \blambda)_{t}$, and  $Y(\blambda)_{t}$  be as in \cref{SS:Codeterminant-Index-Sets}.

This data makes $\SS(n,t)$ an upper finite symmetrically based quasi-hereditary algebra in the sense of~\cite[Definition 5.1]{BrundanStroppel}.
\end{theorem}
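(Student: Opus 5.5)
Recall the axioms of \cite[Definition 5.1]{BrundanStroppel}. We need: a locally unital, locally finite-dimensional algebra with idempotents indexed by $I$; an upper finite poset $\Lambda$ and a function $I \to \Lambda$ (here we take $\Lambda \subseteq I$ with the inclusion, and the idempotents $1_{\blambda}$); sets $X(\blambda,\bmu)$ and $Y(\bmu,\blambda)$ nonempty only when $\bmu \preceq \blambda$, with $X(\blambda,\blambda)=Y(\blambda,\blambda)=\{1_{\blambda}\}$; the products $yx$ forming a basis; $\tau$ fixing the idempotents and interchanging $X$ and $Y$; and the ideal condition. Then $\SS(n,t)$ is upper finite symmetrically based quasi-hereditary. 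I will go through these in turn.

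The structural data are already in hand. Locally unital and locally finite-dimensional come from \cref{C:Identity-Morphisms,P:Snt-Basis-Theorem}. Upper finiteness, the support condition $\bmu\preceq\blambda$, and the condition $X(\blambda,\blambda)_t=Y(\blambda,\blambda)_t=\{1_{\blambda}\}$ are \cref{L:upper-finite-axioms}. The basis property is \cref{E:Codet-Basis-for-Snt}, obtained from \cref{T:codeterminant-basis-Snt} by specializing the $\IntegerValuedRing$-form $\TT_{\IntegerValuedRing}(n,T)$.

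For the symmetry, note that $\tau(\xi_q)=\xi_{q^*}$ by \cref{C:involutive-anti-automorphism-on-Snt}. Hence $\tau$ maps $X(\blambda,\bmu)_t$ bijectively onto $Y(\bmu,\blambda)_t$, directly from the definitions in \cref{SS:Codeterminant-Index-Sets}. By \cref{C:Identity-Morphisms}, $\tau$ fixes each $1_{\bbeta}$.

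The main obstacle is the ideal axiom. Let $\SS(n,t)_{\succeq\blambda}$ (resp.\ $\SS(n,t)_{\succ\blambda}$) denote the span of codeterminants $\xi_q\xi_r$ whose shape $\bnu$ satisfies $\bnu\succeq\blambda$ (resp.\ $\bnu\succ\blambda$). We must show that for $x\in X(\blambda,\bbeta)_t$ and $a\in 1_{\bbeta'}\SS(n,t)1_{\bbeta}$,
\[
x\,a \equiv \sum_{x'\in X(\blambda,\bbeta')_t} c_{x'}\, x' \pmod{\SS(n,t)_{\succ\blambda}},
\]
and the symmetric statement for $Y$, which follows by applying $\tau$. (Equivalently, $\SS(n,t)_{\succeq\blambda}$ is a two-sided ideal and the quotient behaves as required.) I would prove this generically and then specialize.

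First, work over $\Q(T)$ with $t=T$. Expand $\xi_q\xi_r\cdot\xi_s$ in $\CoDetBasis(n)_T$ with coefficients $b(T)\in\Q(T)$; for a fixed $(\bdelta,\bbeta)$ block the expansion is finite. After clearing denominators, apply $F^{\Q}_{n,d}$ for $d\gg0$. By the bijections in the proof of \cref{T:codeterminant-basis-Snt} and \cref{R:integral-evaluation-plus-quotient}, the result is the corresponding codeterminant expansion in $S_\Q(n,d)$, and linear translation preserves dominance. Green's theorem that $S(n,d)$ is based quasi-hereditary with respect to codeterminants (\cite{Green-codets}, \cite{KleshchevMuth}) then forces $b(d)=0$ for every term violating the axiom, and also gives the independence of the leading coefficient $c_{x'}$ from $y$. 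Since this holds for infinitely many $d$, $b(T)=0$ as polynomials. The same argument gives integrality in $\IntegerValuedRing$, as in \cref{P:TnT-is-an-integral-form}.

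Finally, specialize along $\ev_t$ using $\TT_{\IntegerValuedRing}(n,T)\otimes\k\cong\SS(n,t)$. The vanishing of coefficients survives specialization, giving the axiom for arbitrary $t$.
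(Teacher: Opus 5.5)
Your proposal is correct. For the actual axioms it coincides with the paper's proof, which only observes that the required properties are already in hand:
\begin{itemize}
\item local unitality and local finite-dimensionality (\cref{C:Identity-Morphisms,P:Snt-Basis-Theorem});
\item upper finiteness, triangularity, and $X(\blambda,\blambda)_t=Y(\blambda,\blambda)_t=\{1_{\blambda}\}$ (\cref{L:upper-finite-axioms});
\item the basis property (\cref{E:Codet-Basis-for-Snt});
\item symmetry under $\tau$.
\end{itemize}

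The difference is the step you call the main obstacle. \cite[Definition 5.1]{BrundanStroppel} has no separate ideal axiom. That property follows formally from the basis axiom, triangularity, and $X(\blambda,\blambda)_t=Y(\blambda,\blambda)_t=\{1_{\blambda}\}$.

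For example, take $x\in X(\blambda,\bbeta)_t$ and $a\in 1_{\bbeta}\SS(n,t)1_{\bbeta'}$, and expand $xa\in 1_{\blambda}\SS(n,t)1_{\bbeta'}$ in the codeterminant basis. A term $y'x'$ with $y'\in Y(\blambda,\bnu)_t$ forces $\blambda\preceq\bnu$, and if $\bnu=\blambda$ then $y'=1_{\blambda}$.

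Similarly, a downward induction on the upper finite poset shows that the span of codeterminants of shape $\succeq\bmu$ is a two-sided ideal. Expand both factors of a product in $1_{\bgamma}\SS(n,t)1_{\bmu}\cdot 1_{\bmu}\SS(n,t)1_{\bdelta}$ in the codeterminant basis. Every term whose shape is strictly above $\bmu$ lands in $1_{\bgamma}\SS(n,t)1_{\bnu}\SS(n,t)1_{\bdelta}$ with $\bnu\succ\bmu$, so the inductive hypothesis covers it. The remaining terms are $y\,1_{\bmu}1_{\bmu}\,x'=yx'$, which are codeterminants of shape $\bmu$. So no return to $d\gg 0$ is needed at this point.

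Your specialization argument for this step is nonetheless sound: clear denominators, apply $F^{\Q}_{n,d}$ for $d\gg 0$, use the based quasi-hereditary structure of $S(n,d)$ on Green's codeterminants to kill the offending polynomial coefficients, then evaluate at $t$ through the $\Q[T]$-forms. It just invokes Green's theorem a second time where a few lines of bookkeeping suffice.

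One small slip: for $xa$ to be nonzero you need $a\in 1_{\bbeta}\SS(n,t)1_{\bbeta'}$, not $a\in 1_{\bbeta'}\SS(n,t)1_{\bbeta}$.
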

\begin{proof} The reader can easily confirm that the six defining properties of an upper finite, based quasi-hereditary algebra given in \cite[Definition 5.1]{BrundanStroppel} have already been established.  Namely, $\SS (n,t) = \bigoplus_{\bdelta, \bbeta \in I} 1_{\bdelta} \SS (n,t) 1_{\bbeta}$ is a locally unital $\k$-algebra which satisfies:
\begin{enumerate}
\item there is a subset of special idempotents $\left\{1_{\blambda} \mid \blambda \in \Lambda \right\}$ for a fixed subset $\Lambda \subseteq I$;
\item  $\Lambda$ is upper finite with respect to $\preceq$ (see \cref{L:upper-finite-axioms});
\item there exist subsets $X(\blambda, \bbeta)_{t} \subseteq 1_{\blambda}\SS (n,t) 1_{\bbeta}$ and $Y(\bdelta , \blambda)_{t} \subseteq 1_{\bdelta} \SS (n,t) 1_{\blambda}$ for all $\blambda \in \Lambda$ and all $\bdelta \in I$;
\item  the products $yx$ over all $ (y,x) \in \bigcup_{\blambda \in \Lambda}  Y(\blambda )_{t} \times X(\blambda )_{t}$ form a basis for $\SS (n,t)$ (see \cref{E:Codet-Basis-for-Snt});
\item for $\blambda, \bmu \in \Lambda$, the sets $X(\blambda, \bmu )_{t}$ and $Y(\bmu , \blambda)_{t}$ are empty unless $\bmu  \preceq \blambda$ (see \cref{L:upper-finite-axioms});
\item  $X(\blambda, \blambda )_{t}= Y(\blambda , \blambda)_{t} = \{ 1_{\blambda}\}$ for each $\blambda \in \Lambda$ (see \cref{L:upper-finite-axioms}).
\end{enumerate}

All that remains is to verify that $\SS(n,t)$ is an upper finite \emph{symmetrically} based quasi-hereditary algebra.  However, this follows from observing that $\tau \left(1_{\bbeta} \right) = 1_{\bbeta}$ by \cref{C:Identity-Morphisms} and $\tau\left(X(\blambda, \bbeta)_{t} \right) = Y(\bbeta, \blambda)_{t}$ for all $\bbeta \in \Lambda(n)_{t}$ and $\blambda \in \Lambda^{+}(n)_{t}$ by definition. 
\end{proof}

As remarked in~\cite[Section 5]{BrundanStroppel}, thinking of $\SS(n,t)$ as a $\k$-linear category with this structure fits it into the theory of cellular categories~\cite{Westbury,EliasLauda}.

\subsection{Standard, Costandard, and Simple Modules for  \texorpdfstring{$\SS (n,t)$}{SS(n,t)}}\label{SS:Standard-Costandard-and-Simple-Modules-for-Snt}
The results of~\cite{BrundanStroppel} can be used to give $\Sntmodlfd$ the structure of a highest weight category and can be used to give a construction of the simple $\SS(n,t)$-modules.

For $\blambda\in\Lambda^{+}(n)_{t}$, let $\SS(n,t)_{\npreceq\blambda}$ be the quotient of $\SS(n,t)$ by the two-sided ideal, $\mathcal{I}(\npreceq \blambda)$, generated by the set
\[
\left\{ 1_{\bmu} \mid  \bmu\in\Lambda^{+}(n)_{t}, \bmu\npreceq\blambda\right\}.
\]
 Write $\bar{a}$ for the coset $a+\mathcal{I}(\npreceq \blambda)$. Let $\Delta(\blambda)$  denote the $\SS(n,t)$-module $\SS(n,t)_{\npreceq\blambda}\bar{1}_{\blambda}$ and let $\nabla(\blambda)$ denote the $\SS(n,t)$-module  $(\bar{1}_{\blambda}\SS(n,t)_{\npreceq\blambda})^{\circledast}$. Call $\Delta(\blambda)$ and $\nabla(\blambda)$ the standard and costandard modules, respectively, labelled by $\blambda$.

For each $\blambda \in \Lambda^{+}(n)_{t}$, $\Delta(\blambda)$ has a unique proper maximal submodule $R(\blambda)$.  Write 
\[
L(\blambda) = \Delta(\blambda)/R(\blambda) = \operatorname{head}(\Delta(\blambda))
\]
for the head of $\Delta(\blambda)$.  Write $\operatorname{socle}(N)$ for the socle of an $\SS(n,t)$-module $N$.

The following is~\cite[Theorem 5.9]{BrundanStroppel} applied to our setting.

\begin{theorem}\label{T:Snt-mod-is-a-HWC}  The category $\Sntmodlfd$ is a highest weight category with standard and costandard modules $\left\{ \Delta(\blambda)\right\}_{\blambda \in \Lambda^{+}(n)_{t}}$ and  $\left\{ \nabla(\blambda)\right\}_{\blambda \in \Lambda^{+}(n)_{t}}$, respectively.  The set 
\[
\left\{L(\blambda) = \operatorname{head}(\Delta(\blambda)) \cong \operatorname{socle}(\nabla(\blambda)) \mid \blambda \in \Lambda^{+}(n)_{t}  \right\}
\] is a complete, irredundant set of simple $\SS(n,t)$-modules.
\end{theorem}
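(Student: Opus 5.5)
The plan is to deduce the theorem directly from \cite[Theorem 5.9]{BrundanStroppel}, using the structure established in \cref{T:Snt-is-based-qh}. Once $\SS(n,t)$ is known to be an upper finite (symmetrically) based quasi-hereditary algebra with weight poset $(\Lambda^{+}(n)_{t},\preceq)$, that general result says several things. First, the category of locally finite-dimensional modules is an upper finite highest weight category. Second, its standard and costandard objects are exactly $\SS(n,t)_{\npreceq\blambda}\bar 1_{\blambda}$ and $(\bar 1_{\blambda}\SS(n,t)_{\npreceq\blambda})^{\circledast}$. Third, the heads of the standard objects give a complete irredundant set of simple modules. So the main step is to check that the hypotheses and definitions used in \cite{BrundanStroppel} match ours.

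\textbf{Step 1.} Confirm that $\SS(n,t)$ is locally finite-dimensional, which is \cref{P:Snt-Basis-Theorem}. Then check that our notion of $\SS(n,t)$-module, namely modules with $M=\bigoplus_{\bbeta}1_{\bbeta}M$, agrees with the locally unital modules of \cite{BrundanStroppel}, and that $\Sntmodlfd$ is their category of locally finite-dimensional modules.

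\textbf{Step 2.} Check that the ideal $\mathcal{I}(\npreceq\blambda)$ generated by the $1_{\bmu}$ with $\bmu\in\Lambda^{+}(n)_{t}$ and $\bmu\npreceq\blambda$ is the ideal used in \cite{BrundanStroppel}. With this, our $\Delta(\blambda)$ and $\nabla(\blambda)$ coincide with their standard and costandard modules. By the codeterminant basis, $\Delta(\blambda)$ has basis $\{\bar y \mid y\in Y(\blambda)_t\}$ and weight space $\Delta(\blambda)_{\blambda}=\k\bar 1_{\blambda}$, using part (3) of \cref{L:upper-finite-axioms}.

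\textbf{Step 3.} Show that $\Delta(\blambda)$ has a unique maximal submodule. Any proper submodule has zero $\blambda$-weight space, since $\bar 1_{\blambda}$ generates $\Delta(\blambda)$. Hence the sum of all proper submodules is still proper. This gives $L(\blambda)$ as the head of $\Delta(\blambda)$.

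\textbf{Step 4.} Prove that $L(\blambda)\cong\operatorname{socle}(\nabla(\blambda))$. Using the symmetric structure, the duality $\vee$ built from $\tau$ fixes simples, because it preserves weight spaces and $\tau(1_{\bbeta})=1_{\bbeta}$. Since $\tau$ swaps $X(\blambda)_t$ and $Y(\blambda)_t$, it sends $\Delta(\blambda)$ to $\nabla(\blambda)$, which gives the socle statement.

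\textbf{Step 5.} Establish completeness and irredundancy. Irredundancy follows because $\blambda$ is the unique maximal weight of $L(\blambda)$, using part (2) of \cref{L:upper-finite-axioms}. For completeness, take a simple module $L$ and choose $\blambda$ maximal among the weights $\bmu\in\Lambda^{+}(n)_t$ with $1_{\bmu}L\neq0$ (this is done in \cite{BrundanStroppel}). Upper finiteness guarantees such a maximal $\blambda$ exists, and the codeterminant basis then shows $L$ is a quotient of $\Delta(\blambda)$.

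I expect the main obstacle to be the completeness argument in Step 5, specifically in the upper finite, infinite-rank setting. One must show that every simple module has some weight in the special set $\Lambda^{+}(n)_{t}$, not merely in $\Lambda(n)_{t}$. I would handle this by writing $1_{\bbeta}$ for $\bbeta\in\Lambda(n)_{t}$ in the codeterminant basis. Each basis element factors through some $1_{\blambda}$ with $\blambda\in\Lambda^{+}(n)_{t}$, so if $1_{\bbeta}L\neq0$ then $1_{\blambda}L\neq0$ for some such $\blambda$. Beyond this point, everything is the content of \cite[Theorem 5.9]{BrundanStroppel}, which I would cite rather than reprove.
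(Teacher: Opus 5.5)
Your proposal is correct and takes the same approach as the paper, which simply applies \cite[Theorem 5.9]{BrundanStroppel} to the upper finite symmetrically based quasi-hereditary structure established in \cref{T:Snt-is-based-qh}. Your Steps 3--5 (the unique maximal submodule, $\Delta(\blambda)^{\vee}\cong\nabla(\blambda)$ via $\tau$, and the classification of simples by a maximal special weight) correctly unpack standard parts of that cited theorem and are not needed beyond the citation.
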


\subsection{Semisimplicity}\label{SS:Semisimplicity-of-Snt}

\begin{theorem}\label{T:Snt-when-t-is-natural-is-not-semisimple}  Let $n \in \N $.  Then, $\Sntmod$ is a semisimple category if and only if $n=1$ or $n\geq 2$ and $t \not\in \Z_{\geq 0}$.
\end{theorem}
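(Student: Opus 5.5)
We prove the two directions separately, using the highest weight structure of \cref{T:Snt-mod-is-a-HWC} and the trace form of \cref{L:basic-trace-facts}.

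\emph{Semisimple cases.} Suppose $n\geq 2$ and $t\notin\Z_{\geq 0}$. By \cref{C:Semisimplification-and-finite-Snd} no basis morphism $\xi_q$ is negligible. We first upgrade this to the statement that each pairing
\[
1_{\bdelta}\SS(n,t)1_{\bbeta}\times 1_{\bbeta}\SS(n,t)1_{\bdelta}\to\k,\qquad (f,g)\mapsto \tr(f\circ g),
\]
is nondegenerate. By \cref{L:basic-trace-facts}(2), in the bases $\{\xi_q\}$ and $\{\xi_{r}\}$ this pairing is diagonal under $q\leftrightarrow q^{*}$, with diagonal entries $t(t-1)\dotsb(t-m+1)z$ where $z\in\Q_{>0}$. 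These entries are nonzero when $t\notin\Z_{\geq 0}$, so there are no nonzero negligible morphisms. The full subcategory on the objects $\MM^{\bbeta}$ is then a Karoubian-closed piece of a category with nondegenerate trace pairings. We apply the standard fact (e.g.\ \cite{EGNO}) that the semisimplification quotient is semisimple: when $\NN=0$, each endomorphism algebra $\End(\MM^{\bbeta})$ is a finite-dimensional algebra whose Jacobson radical consists of nilpotent $f$. For nilpotent $f$, $\tr(f\circ g)=0$ for all $g$, since $f\circ g$ lies in the radical and hence is nilpotent with zero trace. So $f$ is negligible, hence $f=0$, and each $\End(\MM^{\bbeta})$ is semisimple.

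We then pass to modules. Every $\Delta(\blambda)$ is a quotient of the projective $\SS(n,t)1_{\blambda}$, whose endomorphism ring $1_{\blambda}\SS 1_{\blambda}$ is semisimple, and similarly for all $1_{\bbeta}$. A locally unital algebra with all $1_{\bbeta}\SS1_{\bbeta}$ semisimple and no negligible morphisms has $\SS1_{\bbeta}$ a direct sum of simple projectives. This follows because $\Hom$ between non-isomorphic indecomposable summands composes to radical elements, which are zero. Hence every module, being a quotient of a sum of the $\SS1_{\bbeta}$, is semisimple. For $n=1$, $\Lambda(1)_t=\{(t)\}$ and $\SS(1,t)=\k\,1_{(t)}$, which is trivially semisimple.

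\emph{Non-semisimple case.} Let $n\geq2$ and $t=d\in\Z_{\geq0}$. We exhibit a non-split extension, or equivalently a standard module that is not simple, or a nonzero nilpotent ideal. The cleanest route is the negligible ideal $\mathcal I(n,d)$ of \cref{T:SSmodIdeal-is-isomorphic-to-Snd}, which is nonzero since, for instance, $\bbeta=(d-(d+1),d+1,0,\dots)$ gives $q_{\bbeta}$ with $q_{1,1}<0$. We show it is not a direct summand compatible with semisimplicity. Suppose $\Sntmod$ were semisimple. Then $1_{\bbeta}\SS1_{\bbeta}$ would be a semisimple finite-dimensional algebra and the trace form restricted to it, given by $\tr(\xi_q\xi_{q'})$, would have to be nondegenerate on the semisimple algebra. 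That is not automatic, however, so we argue instead via highest weight theory. A highest weight category is semisimple iff $\Delta(\blambda)=L(\blambda)=\nabla(\blambda)$ for all $\blambda$, iff $\Hom(\Delta(\bmu),\Delta(\blambda))=0$ for $\bmu\neq\blambda$. We take $\blambda=(d,0,\dots,0)$ and $\bmu=(-1,d+1,0,\dots,0)\in\Lambda^{+}(n)_d$. The morphism $\xi_q$ with $q\in\MMat_n(\blambda,\bmu)_d$, $q_{1,1}=-1$, $q_{1,2}=d+1$, lies in $X(\blambda,\bmu)$. Its image in $\Delta(\blambda)$ via $\tau$, namely $\xi_{q^{*}}\in Y(\bmu,\blambda)$, gives a nonzero vector $\bar\xi_{q^*}\bar1_{\blambda}\in\Delta(\blambda)_{\bmu}$. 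We check that this vector generates a proper submodule: it is killed by $1_{\blambda}\SS$ because every composite $\xi_{r}\xi_{q^{*}}\in1_{\blambda}\SS1_{\blambda}=\k1_{\blambda}$ (the only matrix in $\MMat_n(\blambda,\blambda)_d$ with $\lambda_2=\dots=0$ is $q_{\blambda}$) is a multiple of $\tr$-type scalar, computed by \cref{T:product-formula} as a falling factorial $d(d-1)\cdots(d-d)=0$. Hence $\Delta(\blambda)$ is not simple.

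The main obstacle is this last computation: verifying that $\xi_{q}\xi_{q^{*}}=C\cdot1_{\blambda}$ with $C=\VV_A(d)$ vanishing exactly because the falling factorial passes through $0$. This mirrors the proof of \cref{T:SSmodIdeal-is-isomorphic-to-Snd} and should be routine.
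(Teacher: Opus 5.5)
Your proof is correct, but it takes a different route from the paper, most visibly in the non-semisimple direction.

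\textbf{Non-semisimple direction ($t=d\in\Z_{\geq 0}$, $n\geq 2$).} The paper argues indirectly. Inflation along $f_{n,d}\colon\SS(n,d)\twoheadrightarrow S(n,d)$ produces finite-dimensional simple modules, while every $\Delta(\blambda)$ has a basis indexed by the infinite set $Y(\blambda)_{d}$. So $\Delta(\blambda)=L(\blambda)$ cannot hold for all $\blambda$. You instead exhibit an explicit proper submodule of $\Delta(\blambda)$ for $\blambda=(d,0,\dots,0)$:
\begin{itemize}
\item $1_{\blambda}\SS(n,d)1_{\bmu}=\k\xi_{q}$, $1_{\blambda}\SS(n,d)1_{\blambda}=\k1_{\blambda}$ and $\tr(1_{\blambda})=1$;
\item so \cref{L:basic-trace-facts}(2), through the factor $d(d-1)\cdots(d-d)$, forces $\xi_{q}\xi_{q^{*}}=0$ already in $\SS(n,d)$;
\item hence the submodule generated by the basis vector $\bar{\xi}_{q^{*}}\in\Delta(\blambda)_{\bmu}$ has zero $\blambda$-weight space;
\item and a semisimple module with a unique maximal submodule is simple.
\end{itemize}
This is more hands-on and avoids \cref{T:SSmodIdeal-is-isomorphic-to-Snd}. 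The paper's argument is shorter and shows where the obstruction comes from, namely the finite-dimensional simples of $S(n,d)$. Your aside that semisimplicity is equivalent to $\Hom(\Delta(\bmu),\Delta(\blambda))=0$ for $\bmu\neq\blambda$ is not a valid criterion for highest weight categories in general, but you never use it.

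\textbf{Semisimple direction ($n\geq 2$, $t\notin\Z_{\geq 0}$).} Both proofs rest on the same two inputs: the monomial, nonvanishing trace pairing of \cref{L:basic-trace-facts}(2), and the vanishing of $\tr$ on nilpotent endomorphisms. The paper applies these to the Jacobson radical of all of $\SS(n,t)$, which is nil because the algebra is algebraic, and then uses the Brundan--Davidson criterion $J=0$. You apply them to the corner algebras and finish with an argument about projective modules. Two points need tightening.
\begin{itemize}
\item Vanishing of traces of nilpotents is not a formal property of the categorical trace. Cite \cite[Corollary 14.8, Theorem 7.11]{HarmanSnowden} for it, as the paper does.
\item Your final step is only sketched. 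Here is a clean version, which does not even need semisimplicity of the corners. Take a primitive idempotent $e\in 1_{\bbeta}\SS(n,t)1_{\bbeta}$ and a nonzero weight vector $x\in 1_{\bdelta}\SS(n,t)e$. Nondegeneracy gives $g\in e\,\SS(n,t)1_{\bdelta}$ with $\tr(gx)=\tr(xg)\neq 0$. Then $gx$ is non-nilpotent in the local ring $e\,\SS(n,t)e$, hence a unit, so $e\in\SS(n,t)x$. Thus each $\SS(n,t)e$ is simple. Every module is a quotient of a direct sum of the $\SS(n,t)1_{\bbeta}$, so every module is semisimple.
\end{itemize}
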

\begin{proof}  If $n=1$, then $\SS(1,t) = \k$ for all $t \in \k$ and semisimplicity is trivially true.  Therefore, we assume $n \geq 2$ for the remainder of the proof.

First, assume $t=d \in \Z_{\geq 0}$ and consider the surjective $\k$-algebra map $f_{n,d}: \SS(n,d) \to S(n,d)$.  By inflation through this map the simple modules for $S(n,d)$ define finite-dimensional simple modules for $\SS(n,d)$.

However, for all $\blambda \in \Lambda^{+}(n)_{t}$, $\Delta(\blambda)$ has a unique maximal proper submodule and therefore is indecomposable.  As noted in the discussion just before~\cite[Theorem 5.9]{BrundanStroppel}, $\Delta(\blambda)$ has a basis indexed by $Y(\blambda)_{t}$.  However, $Y(\blambda)_{t}$ is easily seen to be an infinite set.  For example, for all $k \geq 0$, $Y(\bdelta^{k}, \blambda )_{t}$ is nonempty when $\bdelta^{k}=(\lambda_{1}-k, \lambda_{2}+k, \lambda_{3}, \dotsc, \lambda_{n})$.  Namely, define $q = (q_{i,j})$ by
\begin{align*}
q_{1,1} &= \lambda_{1}-k,\\
q_{2,1} & = k, \\
q_{i,j} & = \delta_{i,j}\lambda_{i}, \text{ otherwise.}
\end{align*} This is easily seen to be an element of $Y(\bdelta^{k}, \blambda)_{t}$.
 
If $\Sntmod$ were a semisimple category when $t=d$, then $L(\blambda)=\Delta(\blambda)$ for all $\blambda \in \Lambda^{+}(n)_{t}$ and all simple modules would be infinite-dimensional.  But this contradicts the fact that finite-dimensional simple modules are known to exist.  

Now assume $t \in \k$ is not a nonnegative integer.  As discussed in~\cite[Section 2.2]{BrundanDavidson}, because $\SS(n,t)$ is locally finite-dimensional, it will be semisimple if and only if the Jacobson radical, 
\[
J(\SS(n,t)) := \bigcap_{\blambda \in \Lambda^{+}(n)_{t}} \operatorname{Ann}_{\SS(n,t)}\left(L(\blambda) \right),
\] is zero.  Since 
\[
J(\SS(n,t)) = \bigoplus_{\bbeta , \bdelta  \in \Lambda(n)_{t}} 1_{\bbeta }J(\SS(n,t))1_{\bdelta },
\] it will suffice to show each $1_{\bbeta }J(\SS(n,t))1_{\bdelta }$ is equal to zero.

Let $x \in 1_{\bbeta }J(\SS(n,t))1_{\bdelta }$ and write
\[
x=\sum_{q \in \MMat_{n}(\bbeta  , \bdelta )_{t}} c_{q}\xi_{q}.
\] But then, for any fixed $r^{*} \in \MMat_{n}(\bdelta  , \bbeta)_{t}$, 
\[
x\xi_{r^{*}}=\sum_{q \in \MMat_{n}(\bbeta  , \bdelta )_{t}} c_{q}\xi_{q}\xi_{r^{*}}
\] is an element of $1_{\bbeta }J(\SS(n,t)) 1_{\bbeta }$.

Because $\SS(n,t)$ is locally finite-dimensional over $\k$, it is also algebraic over $\k$. Therefore the Jacobson radical is a nil ideal (e.g., see~\cite[Lemma 3.12(iv)]{Passman} or~\cite[Corollary 4.19]{LamBook}).  However, if $x\xi_{r^{*}}$ is nilpotent, then \cite[Corollary 14.8, Theorem 7.11]{HarmanSnowden} imply that 
\[
\tr \left( x\xi_{r^{*}}\right) =0.
\]  On the other hand, by  \cref{L:basic-trace-facts}:
\begin{align*}
\tr \left( x\xi_{r^{*}}\right) &= \tr \left(\sum_{q \in \MMat_{n}(\bbeta  , \bdelta )_{t}} c_{q}\xi_{q}\xi_{r^{*}} \right)\\
& = \sum_{q \in \MMat_{n}(\bbeta  , \bdelta )_{t}} c_{q}\tr \left(\xi_{q}\xi_{r^{*}} \right)\\
& = c_{r}t(t-1)(t-2)\dotsb (t-m+1)z,
\end{align*} for some $z \in \Q_{>0}$ and nonnegative integer $m$.  Since $t\not\in \Z_{\geq 0}$, this expression equaling zero implies $c_{r}=0$.  Since this is true for all $r \in \MMat_{n}(\bbeta, \bdelta  )_{t}$, $x$ must equal zero.  Therefore, $J(\SS(n,t))=0$ and $\SS(n,t)$ is semisimple.
\end{proof}

\section{ \texorpdfstring{$\SS (n,t)$}{SS(n,t)} as a Quotient of a Modified Enveloping Algebra for  \texorpdfstring{$\gl_{n}(\k )$}{gln(k)}}\label{S:Enveloping-Algebra}

\subsection{The General Linear Lie Algebra}\label{SS:Lie-combinatorics} 

For $n \in \N  \cup \{\infty \}$ and a characteristic zero field $\k$, consider the general linear Lie algebra $\gl_{n}(\k )$ with entries from $\k$ (where all but finitely many entries are assumed to be zero in the case $n=\infty$).  Let $\fh=\fh_{n}$ denote the Cartan subalgebra consisting of diagonal matrices, $\fb^{+}=\fb_{n}^{+}$ the Borel subalgebra consisting of upper triangular matrices, and $\fb^{-}=\fb_{n}^{-}$ the opposite Borel subalgebra consisting of lower triangular matrices.

If we let $\varepsilon_{i}: \fh \to \k$ denote the linear functional which selects the $i$th diagonal entry when given an element of $\fh$, then $X(n)$ from \cref{SS:Compositions-and-Partitions} is the integral weight lattice for $\gl_{n}(\k)$; more generally, $X(n)_{t}$ from \cref{SSS:compositions-and-partitions-of-t} is a $t$-shifted integral weight lattice for any $t \in \k$.

For $1 \leq  i \neq j \leq  n$, let $\balpha_{i,j}= \varepsilon_{i}-\varepsilon_{j}$, and for $i \in [1,n-1]$ let $\balpha_{i}=\balpha_{i,i+1}$.  Observe that $\blambda + \balpha_{i,j}$ is an element of $X(n)_{t}$ for all $\blambda \in X(n)_{t}$ and all $\balpha_{i,j}$. The set $\Phi = \{\balpha_{i,j} \mid 1 \leq  i \neq j \leq n \}$ is the set of roots for $\gl_{n}(\k)$. The sets $\Phi^{\pm}=\{\pm \balpha_{i,j}\mid 1 \leq i < j \leq n \}$ and $\Delta=\{\balpha_{i}\mid 1 \leq i \leq n-1 \}$ are the choice of positive, negative, and simple roots corresponding to our choice of Borel subalgebra.  The matrix unit $E_{i,j} \in \gl_{n}(\k)$ is our fixed choice of root vector corresponding to $\balpha_{i,j} \in \Phi$, and $e_{i}:=E_{i,i+1}$ and $f_{i}:=E_{i+1,i}$ are our fixed choice of simple root vectors.

\subsection{A Locally Unital Enveloping Algebra}\label{SS:t-shifted-enveloping-algebra}  Fix $t$ in a commutative ring $R$.  Using the notation of the previous section, we define a locally unital $R$-algebra as follows.

\begin{definition}\label{D:Udot-Def}
  Let $\dotU(n,t)=\dotU_{R}(n,t)$ be the locally unital associative $R$-algebra with generators
\[
\{e_i^{\blambda},f_i^{\blambda} \mid \blambda\in X(n)_{t}, i \in [1,n-1]\}\cup\{1_{\blambda} \mid\blambda\in X(n)_{t}\}
\] 
and, for all $\blambda, \bmu \in X(n)_{t}$ and all $i,j \in [1,n-1]$, defining relations:
\begin{enumerate}
    \item[(U0)] $1_{\blambda }1_{\bmu }=\delta_{\blambda,\bmu}1_{\blambda}$;
    \item[(U1)] $e_{i}^{\blambda}1_{\bmu}=\delta_{\blambda,\bmu}e_{i}^{\blambda}$, $1_{\bmu}e_{i}^{\blambda}=\delta_{\blambda+\balpha_{i},\bmu}e_{i}^{\blambda}$;
    \item[(U2)] $f_{i}^{\blambda}1_{\bmu}=\delta_{\blambda+\balpha_{i},\bmu}f_{i}^{\blambda}$, $1_{\bmu}f_{i}^{\blambda}=\delta_{\blambda,\bmu}f_{i}^{\blambda}$;
    \item[(U3)]  $e^{\blambda +\balpha_{j}+\balpha_{i}}_ie^{\blambda +\balpha_{j}}_ie^{\blambda }_{j}-2e^{\blambda +\balpha_{i}+\balpha_{j}}_ie^{\blambda +\balpha_{i}}_je^{\blambda }_{i}+e^{\blambda +\balpha_{i}+\balpha_{i}}_je^{\blambda + \balpha_{i}}_ie^{\blambda }_{i}=0$,  if $|i-j|=1$;
    \item[(U4)]  $f^{\blambda}_if^{\blambda +\balpha_{i}}_if^{\blambda+\balpha_{i}+\balpha_{i}}_{j}-2f^{\blambda}_if^{\blambda +\balpha_{i}}_jf^{\blambda+\balpha_{i}+\balpha_{j} }_{i}+f^{\blambda}_jf^{\blambda +\balpha_{j}}_if^{\blambda +\balpha_{i}+\balpha_{j}}_{i}=0$,  if $|i-j|=1$;
    \item[(U5)]  $e^{\blambda+\balpha_{j}}_ie^{\blambda }_j-e^{\blambda + \balpha_{i}}_je^{\blambda}_i=0$,  if $|i-j| \neq 1$;
    \item[(U6)]  $f^{\blambda}_if^{\blambda +\balpha_{i} }_j-f^{\blambda }_jf^{\blambda+ \balpha_{j} }_i=0$,  if $|i-j| \neq 1$;
   \item[(U7)]  $e^{\blambda-\balpha_{j}}_if^{\blambda - \balpha_{j}}_j-f^{\blambda+\balpha_{i}-\balpha_{j}}_je^{\blambda }_i=\delta_{i,j}(\lambda_i-\lambda_{i+1})1_{\blambda}$.
\end{enumerate}  
\end{definition}  
\noindent Note that the only defining relation influenced by the choice of $t$ is (U7).  Also note that $\dotU_{R}(n,t)$ admits an anti-involution, $\tau$, given on generators by $\tau (1_{\blambda }) = 1_{\blambda }$, $\tau (e_{i}^{\blambda}) = f_{i}^{\blambda}$, and $\tau (f_{i}^{\blambda}) = e_{i}^{\blambda}$.

\begin{remark}\label{R:Udot-Remark-1} The algebra $\dotU_{R}(n,t)$ is a locally unital version of the enveloping algebra of the general linear Lie algebra, $U(\gl_{n}(\k))$, with  $t$-shifted integral weights.  In particular, $e_{i}^{\blambda}$ and $f_{i}^{\blambda}$ are locally unital versions of the simple root vectors $e_{i}$ and $f_{i}$, and the defining relations for $\dotU_{R}(n,t)$ are locally unital versions of the defining relations for $U(\gl_{n}(\k))$.  By definition, a $\dotU_{R}(n,t)$-module is a module, $M$, for the associative $R$-algebra $\dotU_{R}(n,t)$ which also satisfies $M = \bigoplus_{\bbeta \in X(n)_{t}} 1_{\bbeta}M$, where the direct sum is a direct sum of $R$-modules. 
\end{remark}

\begin{remark}\label{R:Udot-Remark} If $R=\k$ is a characteristic zero field, then the category of modules for the locally unital algebra $\dotU_{\k}(n,t)$ coincides with the category of weight modules for the enveloping algebra $U(\mathfrak{gl}_{n}(\k ))$ whose weights lie in $X(n)_{t}$.  

We briefly sketch the correspondence. If $M = \bigoplus_{\bgamma \in X(n)_{t}} M_{\bgamma}$ is a weight module for $U(\mathfrak{gl}_{n}(\k ))$, then define the action of $\dotU_{\k}(n,t)$ on $M$ by declaring for any $m \in M_{\bgamma}$ that $1_{\blambda }m = \delta_{\blambda , \bgamma}m$, $e_{i}^{\blambda}m = \delta_{\blambda , \bgamma} e_{i}m$, and $f_{i}^{\blambda}m = \delta_{\blambda+\balpha_{i} , \bgamma} f_{i}m$.  Conversely, given a $\dotU_{\k}(n,t)$-module, $M = \bigoplus_{\bgamma \in X(n)_{t}} 1_{\bgamma}M$,  declare $1_{\bgamma}M$ to be the $\bgamma$-weight space for the Cartan subalgebra,  and declare the action of $U(\mathfrak{gl}_{n}(\k ))$ to be given on an $m \in 1_{\bgamma}M$ by $e_{i}m = e_{i}^{\bgamma}m$ and $f_{i}m = f_{i}^{\bgamma-\balpha_{i}}m$.

We will freely go between these two notions in what follows.
\end{remark}

\subsection{Borel and Borel--Schur Subalgebras}\label{SSS:Borel-Subalgebras}

Given $t \in R$, let $\dotU (\fb^{+}, t)= \dotU_{R}(\fb^{+}, t)$ denote the locally unital $R$-algebra generated by $\left\{1_{\blambda} \mid \blambda \in X(n)_{t} \right\}$ and $\left\{e_{i}^{\blambda}\mid \blambda \in X(n)_{t}, i \in [1,n-1] \right\}$ subject to relations (U0), (U1), (U3), and (U5).  Likewise, let $\dotU (\fb^{-},t)=\dotU_{R}(\fb^{-},t)$ denote the locally unital $R$-algebra generated by $\left\{1_{\blambda} \mid \blambda \in X(n)_{t} \right\}$ and $\left\{f_{i}^{\blambda}\mid \blambda \in X(n)_{t}, i \in [1,n-1] \right\}$ subject to relations (U0), (U2), (U4), and (U6).

Because the relevant defining relations do not involve the parameter $t$, there is an isomorphism for every $t$, 
\[
\dotU_{R}(\fb^{\pm}):=\dotU_{R}(\fb^{\pm}, 0) \xrightarrow{\cong} \dotU_{R} (\fb^{\pm}, t),
\]
given by $1_{\blambda}\mapsto 1_{\blambda(t)}$, $e_{i}^{\blambda} \mapsto e_{i}^{\blambda (t)}$, and $f_{i}^{\blambda} \mapsto f_{i}^{\blambda (t)}$.
If $R$ contains $\Q$, then, 
\[
\dotU_{\Q}(\fb^{\pm}) \otimes_{\Q}R \cong \dotU_{R}(\fb^{\pm}).
\]

For $n \in \N \cup \{\infty \}$, $d \in \Z_{\geq 0}$, and $t \in R$, let $\SS^{\pm }_{R}(n,t)$ be the subalgebra of $\SS_{R}(n,t)$ spanned by $\xi_{q}$ for all $q \in \MMat^{\pm}_{n}(t)$.  Similarly, let $S^{\pm}_{R}(n,d)$ denote the subalgebras of $S_{R}(n,d)$ spanned by $\xi_{q}$ for all $q \in \Mat^{\pm}(n,d)$.  The fact that these are subalgebras follows from \cref{T:product-formula,T:classical-product} (see the proof of \cref{P:Borel-Subalgebra-maps} for a similar argument).  

\begin{proposition}\label{P:Borel-Subalgebra-maps} Let $n \in \N \cup \{\infty \}$, $d \in \Z_{\geq 0}$, and $t \in R$.  There are homomorphisms of locally unital algebras, 
\begin{align*}
\iota_{n,t}^{U^{\pm}}:\dotU_{R}(\fb_{n}^{\pm}) \to \dotU_{R}(n,t), \\
\iota_{n,d,t}^{S^{\pm}}:S^{\pm}_{R}(n,d) \to \SS^{\pm}_{R}(n,t).
\end{align*} 
\end{proposition}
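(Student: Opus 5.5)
The two maps have different natures, so I will handle them separately.

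For $\iota_{n,t}^{U^{\pm}}$, I take it to be the composite of the isomorphism $\dotU_{R}(\fb^{\pm}) = \dotU_{R}(\fb^{\pm},0) \xrightarrow{\cong} \dotU_{R}(\fb^{\pm},t)$ recorded above with the map $\dotU_{R}(\fb^{\pm},t) \to \dotU_{R}(n,t)$ sending each generator $1_{\blambda}$, $e_{i}^{\blambda}$ (resp.\ $f_{i}^{\blambda}$) to the generator of the same name. This is well defined because the defining relations of $\dotU_{R}(\fb^{+},t)$, namely (U0), (U1), (U3), (U5), are among the defining relations of $\dotU_{R}(n,t)$; the same holds for (U0), (U2), (U4), (U6) in the $\fb^{-}$ case. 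The map also sends the distinguished idempotents to distinguished idempotents, so it is a homomorphism of locally unital algebras. This part is formal.

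For $\iota_{n,d,t}^{S^{\pm}}$, I first need that $S^{\pm}_{R}(n,d)$ and $\SS^{\pm}_{R}(n,t)$ are subalgebras. Say $q,r$ are upper triangular and $A \in \AA_{n}(\qrs)_{t}$ with $A_{i,j,k}\neq 0$. Then $q_{i,j}\neq 0$ and $r_{j,k}\neq 0$, which force $i\le j\le k$. Since $s_{i,k}=\sum_j A_{i,j,k}$, only upper triangular $s$ can have $\AA_{n}(\qrs)_{t}$ nonempty, so by \cref{T:product-formula} the product stays in the span. The same argument works for lower triangular matrices, and for the classical case via \cref{T:classical-product}.

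I then define $\iota_{n,d,t}^{S^{\pm}}(\xi_{q}) = \xi_{q'}$, where $q'$ is obtained from $q\in \Mat^{\pm}(n,d)$ by the shift $q' = q(t+d)$ in the sense of linear translation. The natural choice is to add $t$ to the $(1,1)$ entry, so $q'_{1,1}=t+q_{1,1}$, and $q'\in\MMat^{\pm}_{n}(t+d)$. The parameter is then $t+d$ rather than $t$; to land in $\SS^{\pm}_{R}(n,t)$ I reindex by $t \mapsto t-d$. Equivalently, I send $\bbeta\in\Lambda(n,d)$ to $(t-d+\beta_{1},\beta_{2},\dots,\beta_{n})\in \Lambda(n)_{t}$ and $q \mapsto q(t)$. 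This is injective on triangular matrices.

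To check multiplicativity, I need $C_{q(t),r(t)}^{s(t)}(t)=c_{q,r}^{s}$ for triangular $q,r,s$. By the support observation, an array in $\AA_{n}(q(t),r(t),s(t))_{t}$ has $A_{1,j,1}=0$ for $j\ge 2$, since $j\le k=1$ forces $j=1$. Hence $\ell=m$ and the leading binomial in \cref{E:Integer-valued-VAT} equals $1$. The remaining factors involve only the integer entries, so $\VV_{A}(t)$ is a $t$-independent integer. This is the same mechanism as in \cref{P:codeterminants-are-integral-combos-of-indicator-functions}. The map $A\mapsto A(t)$ is a bijection $\A(n,\qrs)\to\AA_{n}(q(t),r(t),s(t))_{t}$ whose inverse is the shift back to $d$. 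The point needing care is that $A_{1,1,1}\ge 0$ is automatic on the $d$ side: $A_{1,1,1}$ is determined by the other entries, and triangularity forces $A_{1,1,1}=q_{1,1}\ge0$, because $q_{1,1}=\sum_k A_{1,1,k}$ and $A_{1,1,k}$ is the only contribution. Under this bijection $\VV_{A(t)}(t)=v_{A}$.

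The main obstacle is exactly this bookkeeping: matching the array sets and multinomial factors across the translation. Once the triangular support constraint kills the $t$-dependent leading factor, the structure constants agree and $\iota^{S^{\pm}}$ is a homomorphism. It preserves the idempotents $1_{\bbeta}\mapsto 1_{\bbeta(t)}$, so it is a homomorphism of locally unital algebras.
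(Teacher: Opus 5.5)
Your route is the paper's: $\iota^{U^{\pm}}$ is the generator-to-generator map, and $\iota^{S^{\pm}}$ is $\xi_q\mapsto\xi_{q(t)}$. The detour through $q(t+d)$ is unnecessary, but your final definition is right. Multiplicativity then comes from the triangular support of arrays forcing $A_{1,j,1}=0$ for $j\ge 2$, which makes the $t$-dependent leading factor of $\VV_A$ equal to $1$. However, two bookkeeping steps need repair.

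\emph{The reason you give for $A_{1,1,1}\ge 0$ on the $d$ side is false in the upper triangular case.} The constraint $i\le j\le k$ does not rule out $A_{1,1,k}\neq 0$ for $k\ge 2$, so in general $A_{1,1,1}\neq q_{1,1}$. For example, take $n=2$, $d=1$, with $q_{1,1}=1$, $r_{1,2}=s_{1,2}=1$, and all other entries $0$. The unique array has $A_{1,1,2}=1$, hence $A_{1,1,1}=0\neq q_{1,1}$. (Your argument does work in the lower triangular case, where the constraint is $i\ge j\ge k$.) The correct argument looks at the slice $k=1$. There $A_{i,1,1}=0$ for $i\ge 2$, so $A(d)_{1,1,1}=r_{1,1}\ge 0$, which is what the paper uses. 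Alternatively, your own observation $A_{1,j,1}=0$ for $j\ge 2$ gives $A(d)_{1,1,1}=s_{1,1}\ge 0$.

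\emph{Matching $C^{s(t)}_{q(t),r(t)}(t)=c^{s}_{q,r}$ for $s\in\Mat^{+}(n,d,\bdelta,\bbeta)$ does not by itself give multiplicativity.} You must also rule out terms $\xi_{\tilde s}$ in the expansion of $\xi_{q(t)}\xi_{r(t)}$ where $\tilde s\in\MMat^{+}_{n}(\bdelta(t),\bbeta(t))_{t}$ is not of the form $s(t)$, i.e.\ has $\tilde s(d)_{1,1}<0$. Such terms would have no counterpart in $S(n,d)$. You only note that $q\mapsto q(t)$ is injective. The paper first proves it is a bijection $\Mat^{+}(n,d,\bdelta,\bbeta)\to\MMat^{+}_{n}(\bdelta(t),\bbeta(t))_{t}$. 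This takes one line: the first column of an upper triangular $\tilde s$ is $(\tilde s_{1,1},0,\dots,0)$, so $\tilde s(d)_{1,1}=\beta_1\ge 0$.

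With these two fixes your argument is complete and coincides with the paper's.
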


\begin{proof}  We only do the $+$ cases since the $-$ cases are essentially identical. The map $\iota_{n,t}^{U^{+}}$ is the obvious one given on generators by $1_{\blambda} \mapsto 1_{\blambda (t)}$ and $e_{i}^{\blambda}\mapsto e_{i}^{\blambda(t)}$ for all $\blambda \in X(n)_{0}$ and $i \in [1,n-1]$.  Since the defining relations of $\dotU_{R}(\fb_{n}^{\pm})$ hold in $\dotU_{R}(n,t)$, it is immediate that this determines a well-defined homomorphism.

The map $\iota_{n,d,t}^{S^{+}}$ is given on basis elements by $\xi_{q} \mapsto \xi_{q(t)}$ for $q \in \Mat^{+}(n,d)$.  This evidently defines an embedding of $R$-modules.  It remains to verify the map is multiplicative.

First, we claim that for all $\bdelta , \bgamma \in \Lambda(n,d)$ the map $q \mapsto q(t)$ is a bijection between $\Mat^{+}(n, d,  \bdelta  , \bgamma  )$ and $\MMat^{+}_{n}(\bdelta(t)  , \bgamma(t)  )_{t}$.  That this is an injective function between the stated sets is immediate.  It remains to see that it is surjective.  To do so, it suffices to verify $r(d)_{1,1} \geq 0$ for all upper triangular $r \in \MMat_{n}(\bdelta(t)  , \bgamma(t)  )_{t}$.  However, 
\[
r(d)_{1,1}= d - \sum_{(i,j) \in [1,n]^{2}\backslash \{(1,1) \}} r(d)_{i,j} = d - \sum_{(i,j) \in [1,n]^{2}\backslash \{(1,1) \}} r_{i,j} = d - \sum_{j \in [2,n]} \delta_{j} = \delta_{1} \geq 0,
\] because $r$ is upper triangular and $\bdelta = (\delta_{1}, \dotsc , \delta_{n}) \in \Lambda(n,d)$.

 We next claim that if $q,r \in \Mat^{+}(n,d)$ and $C_{q(t),r(t)}^{s}(t) \neq 0$ for some $s \in \MMat_{n}(\bdelta (t), \bbeta (t))_{t}$, then $s$ is upper triangular. Say $A = (A_{i,j,k}) \in \AA_{n}(q(t), r(t), s)$ is an array used to calculate $C_{q(t),r(t)}^{s}(t)$, then $\sum_{k \in [1,n]} A_{i,j,k} = q_{i,j}=0$ whenever $i > j$ and $\sum_{i \in [1,n]} A_{i,j,k} = r_{j,k}=0$ whenever $j > k$.  This implies $A_{i,j,k}=0$ whenever $i > j$ or $j > k$.  However, if $i > k $, then this implies the nonnegative integer $s_{i,k}$ satisfies
\[
s_{i,k} = \sum_{j \in [1,n]} A_{i,j,k} \leq \sum_{j \in [1,i-1]} A_{i,j,k} + \sum_{j \in [k+1,n]} A_{i,j,k} = 0.
\]  That is, $s$ is upper triangular.  The same argument shows that if $q$ and $r$ are upper triangular and $c_{q,r}^{s} \neq 0$ for some $s \in \Mat (n, d, \bdelta, \bbeta)$, then $s$ is upper triangular.

Let $r \in \Mat^{+}(n, d, \bgamma , \bbeta  )$ and $q \in \Mat^{+}(n, d, \bdelta  , \bgamma  )$.  Then, 
\begin{equation}\label{E:product-equation}
 \xi_{q(t)}\xi_{r(t)} = \sum_{\tilde{s} \in \MMat^{+}_{n}(\bdelta (t), \bbeta (t))_{t}} C_{q(t),r(t)}^{\tilde{s}}(t) \xi_{\tilde{s}}=\sum_{s \in \Mat^{+}(n, d, \bdelta, \bbeta)} C_{q(t),r(t)}^{s(t)}(t) \xi_{s(t)},
\end{equation} where the last equality follows from the preceding discussion.
The map $\iota^{S^{+}}_{n,d,t}$ will be multiplicative as long as
\[
C_{q(t),r(t)}^{s(t)}(t) = c_{q,r}^{s}
\]
for all upper triangular $q,r,s \in \Mat^{+}(n,d)$.

First, we claim that the function $\A(n, \qrs ) \to \AA_{n}(q(t), r(t), s(t))_{t}$ given by $A\mapsto A(t)$ is a bijection.  Like before, the map is clearly injective.  Surjectivity follows if we verify that $B(d)_{1,1,1} \geq 0$ for all $B\in \AA_{n}(q(t), r(t), s(t))_{t}$.  However, since $q(t)$ is upper triangular, $B_{i,1,1}(d)=B_{i,1,1}=0$ for $i \in [2,n]$.  As a consequence,
\[
B(d)_{1,1,1}=  \sum_{i \in [1,n]} B_{i,1,1}(d) = r_{1,1} \geq 0, 
\] where the last inequality holds because $r$ was assumed to be an element of $\Mat(n,d)$.

We next claim
\[
\VV_{A}(t) = v_{A(d)}
\] for all $A \in \AA_{n}(q(t), r(t), s(t))_{t}$. In comparing the two, we see there is only one term in the combinatorial formulas for $\VV_{A}(t)$ and $v_{A(d)}$ that might differ due to the shift in the $(1,1,1)$-entry.  However, $r(t)$ is upper triangular and this implies
\[
\sum_{i \in [1,n]} A_{i,j,k} = r(t)_{j,k}=0
\] whenever $j > k$.  Thus, $A_{i,j,k}=A(t)_{i,j,k}=0$ for $j > k$.  In particular, $A_{1,j,1}=A(t)_{1,j,1}=0$ for $j \in [2,n]$.  Applying this observation to the combinatorial formulas for $\VV_{A}(t)$ and $v_{A(d)}$ shows that the one term where they could differ is actually equal to $1$ in both of them.  Hence the two formulas agree and $\VV_{A}(t) = v_{A(d)}$. 

From this it follows that the structure constants $C_{q(t),r(t)}^{s(t)}$ and $c_{q,r}^{s}$ are equal, as desired.
\end{proof}

\begin{remark}\label{R:iota-properties}

The family of maps $\left\{\iota_{n,d,t}^{S^{\pm}} \right\}_{d \in \N}$ are asymptotically surjective in the sense that, given $y \in \SS^{\pm}(n,t)$, for all sufficiently large $d$ there exists $x \in S^{\pm}(n,d)$ such that $\iota^{S^{\pm}}_{n,d,t}(x) = y$.  This follows from the fact that any finite subset $\MMat_{n}(t)$ is in the image of the map $\Mat (n,d) \to \MMat_{n}(t)$ given by $q \mapsto q(t)$, once $d$ is sufficiently large.
\end{remark}

\subsection{Classical Schur--Weyl Duality}\label{SS:classical-Schur-Weyl-duality}

Since the $U(\gl_{n}(\Q))$-module $V_{n}^{\otimes d}$ has weights which lie in $X(n)_{d}$, we may view $V_{n}^{\otimes  d}$ as a $\dotU_{\Q}(n,d)$-module.  From this and classical Schur--Weyl duality, it follows that for every $n,d \in \N$ there is a surjective map, 
\begin{equation*}
\pi_{n,d}: \dotU_{\Q}(n,d) \to S_{\Q }(n,d).
\end{equation*}  The restriction of these maps define surjective maps 
\begin{equation*}
\pi_{n,d}: \dotU_{\Q}(\fb^{\pm}) \to S_{\Q }^{\pm}(n,d),
\end{equation*}
which we call by the same name.  That these maps remain surjective can be deduced from the explicit description of these maps obtained by specializing the $q$ to $1$ in~\cite[Proposition 2.3]{GreenRM}.

If $R$ contains $\Q$, then after base change we have a surjective map which we continue to call by the same name,
\begin{equation}\label{E:Borel-pi-maps}
\pi_{n,d}: \dotU_{R}(\fb^{\pm}) \to S_{R}^{\pm}(n,d).
\end{equation}

\subsection{Summary of Algebra Maps}\label{SS:Maps-Among-Enveloping-and-Schur-Algebras}

Let $d \in \N$ and let $\ev_{d}: \Q [T] \to \Q$ be evaluation at $T=d$.  There is a corresponding map of rings, 
\begin{equation}\label{E:ev-family-of-maps}
\ev_{d}: \dotU_{\Q [T]}(n,T) \to \dotU_{\Q}(n,d),
\end{equation}
which we call by the same name.

For all $d \in \N$, the locally unital algebra $\dotU_{\Q}(n,d)$ has a basis given by PBW elements which are certain ordered products of root vectors (e.g., this can be seen by specializing the $q$ in~\cite[Theorem 3.1]{WangW-PBW} to $1$).  Standard inductive arguments on monomial length show that $\dotU_{\Q [T]}(n,T)$ is spanned as a $\Q [T]$-algebra by the same ordered products of root vectors, which we call PBW elements.  Since $\ev_{d}$ takes generators to generators, it also takes root vectors to root vectors, products of root vectors to products of root vectors and, hence these PBW elements to PBW basis elements in $\dotU_{\Q}(n,d)$.

Say $\sum_{i}a_{i}(T)X_{i}=0$ is a linear combination of distinct PBW elements in $\dotU_{\Q [T]}(n,T)$.  Then $\ev_{d}\left(\sum_{i}a_{i}(T)X_{i} \right)=\sum_{i}a_{i}(d)X_{i}=0$ for all $d \gg 0$.  However, the PBW elements  in $\dotU_{\Q}(n,d)$ are a basis and, hence, $a_{i}(d)=0$ for all $d \gg 0$.  Therefore $a_{i}(T)=0$ for all $i$ and the PBW elements of $\dotU_{\Q [T]}(n,T)$ form a basis.

A similar argument shows that the family of maps $\left\{\ev_{d} \right\}_{d \in \N }$ given in \cref{E:ev-family-of-maps} is asymptotically injective in the sense that $\ev_{d}(x) = 0$ for all $d \gg 0$ implies $x=0$.  Because these maps send a basis to a basis, extension of scalars along $\ev_{d}: \Q  [T] \to \Q $ defines an isomorphism
\[
\dotU_{\Q [T]}(n,T) \otimes_{\Q [T]} \Q \cong \dotU_{\Q}(n,d)
\]   for all $d \in \Z_{\geq 0}$.

The various maps now in play are summarized in the following diagram of ring homomorphisms. The map $u_{n}$ will be introduced shortly and will make the diagram commute.
\begin{equation}\label{E:UnT-SnT-commutative-square}
\begin{tikzcd}
	\dotU_{\Q [T]}(\fb^{\pm}_{n} ) & & S^{\pm}_{\Q [T]} (n,d)  \\
	  & & \\
	\dotU_{\Q [T]}(n,T) & & \SS_{\Q [T]} (n,T)  \\
	  & & \\
	\dotU_{\Q}(n,d) & & S_{\Q}(n,d)
	\arrow["\pi_{n,d}", two heads = right, from=1-1, to=1-3]
	\arrow["\iota^{U^{\pm}}_{n,T}", from=1-1, to=3-1]
	\arrow["\iota^{S^{\pm}}_{n,d,T}", from=1-3, to=3-3]
	\arrow["\ev_{d}", from=3-1, to=5-1]
	\arrow["{u_n}", dashed, from=3-1, to=3-3]
	\arrow["{F^{\Q}_{n,d}}", from=3-3, to=5-3]
	\arrow["\pi_{n, d}"', two heads = right, from=5-1, to=5-3]
\end{tikzcd}
\end{equation}

\subsection{Map from the Locally Unital Enveloping Algebra to the Interpolating Schur Algebra}\label{SS:map-from-enveloping-to-schur}  For $\blambda \in \Lambda(n)_{t}$, and $i \in [1,n-1]$, let $q_{i}^{\blambda,+} \in \MMat_{n} (\blambda + \balpha_{i}, \blambda )_{t}$ and $q_{i}^{\blambda,-} \in \MMat_{n} (\blambda , \blambda + \balpha_{i} )_{t}$ be given by
\[
q_{i}^{\blambda,+}=\left(\begin{tikzcd}[cramped,sep=tiny]
	{\lambda_1} &&&&& \\
	& \ddots &&&& \\
	&& {\lambda_{i}} & 1 && \\
	&&& {\lambda_{i+1}-1} && \\
	&&&& \ddots \\ &
	&&&&& {\lambda_{n}}
\end{tikzcd}\right) \;\; \text{ and } \;\;
q_{i}^{\blambda,-}=\left(\begin{tikzcd}[cramped,sep=tiny]
	{\lambda_1} &&&&& \\
	& \ddots &&&& \\
	&& {\lambda_{i}} &&& \\
	&& 1 & {\lambda_{i+1}-1} && \\
	&&&& \ddots \\ &
	&&&&& {\lambda_{n}}
\end{tikzcd}\right).
\]

\begin{theorem}\label{T:map-from-dotU-to-Snt}  Let $R= \Q [T]$ and $t = T$.  Then, there is a map of $\Q [T]$-algebras
\[
u_n:\dotU_{\Q [T]}(n,T)\to \SS_{\Q [T]}(n,T)
\]
given on generators by

\begin{align*}
    e_i^{\blambda}&\mapsto \begin{cases}
        \xi_{q_{i}^{\blambda, +}}, & \text{if $\blambda, \blambda + \balpha_{i} \in\Lambda(n)_{T}$};\\
        0, & \text{else};
    \end{cases} \\
     f_i^{\blambda}&\mapsto \begin{cases}
        \xi_{q_i^{\blambda,-}}, & \text{ if $\blambda, \blambda+ \balpha_{i} \in\Lambda(n)_{T}$};\\
        0, & \text{else};
    \end{cases}\\
     1_{\blambda}&\mapsto \begin{cases}
        1_{\blambda}, & \text{if $\blambda\in\Lambda(n)_{T}$};\\
        0, & \text{else}.
    \end{cases}
\end{align*}
\end{theorem}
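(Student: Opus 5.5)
Since $\dotU_{\Q[T]}(n,T)$ is given by generators and relations, the plan is to check that the proposed images of the generators satisfy (U0)--(U7) in $\SS_{\Q[T]}(n,T)$. Relations (U0)--(U2) are immediate. The elements $1_{\blambda}$ are orthogonal idempotents by \cref{C:Identity-Morphisms}. Moreover $\xi_{q_i^{\blambda,\pm}}$ lie in the correct Hom-spaces $1_{\blambda+\balpha_i}\SS 1_{\blambda}$ and $1_{\blambda}\SS 1_{\blambda+\balpha_i}$, and when a weight leaves $\Lambda(n)_T$ both sides are zero. Each remaining relation is an identity between elements of a single summand $1_{\bmu}\SS_{\Q[T]}(n,T)1_{\blambda}$.

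For these identities I would reduce to the classical case via the asymptotically injective family $F_{n,d}=f_{n,d}\circ\ev_d$ of \cref{R:integral-evaluation-plus-quotient}. Fix a relation with fixed weight data. The difference $x$ of its two sides is an element of $\SS_{\Q[T]}(n,T)$, so it suffices to show $F_{n,d}(x)=0$ for all $d\gg 0$. For $d$ large, every weight involved, evaluated at $d$, lies in $\Lambda(n,d)$, and every matrix $q_i^{\blambda,\pm}(d)$ has nonnegative $(1,1)$-entry. Hence $F_{n,d}$ sends $\xi_{q_i^{\blambda,\pm}}$ to $\xi_{q_i^{\blambda(d),\pm}}\in S_\Q(n,d)$, and $1_{\blambda}$ to $1_{\blambda(d)}$. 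The coefficient $(\lambda_i-\lambda_{i+1})$ in (U7) specializes to $\lambda(d)_i-\lambda(d)_{i+1}$. The relation is therefore sent to the same relation in $S_\Q(n,d)$ for the elements $\xi_{q_i^{\bmu,\pm}}$.

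In $S_\Q(n,d)$ these elements are exactly $\pi_{n,d}(e_i^{\bmu})$ and $\pi_{n,d}(f_i^{\bmu})$. Under Schur--Weyl duality, $e_i=E_{i,i+1}$ acts on $V_n^{\otimes d}$ of weight $\bmu$ by summing over the positions holding an $i+1$ and changing one of them to $i$. This operator is the orbit sum $\xi_q$ with $q$ the diagonal matrix for $\bmu$, modified so that one $(i+1)$ becomes an $i$; that is $q_i^{\bmu,+}$, and similarly for $f_i$. I would verify this by comparing with the basis description in \cref{SS:Classical-Schur-Algebra}, or with the specialization of \cite[Proposition 2.3]{GreenRM}. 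Because $\pi_{n,d}$ is an algebra homomorphism from $\dotU_\Q(n,d)$, the images satisfy (U3)--(U7). Hence $F_{n,d}(x)=0$ for $d\gg0$, so $x=0$.

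The main obstacle I expect is the bookkeeping for weights near the boundary of $\Lambda(n)_T$, where some generators map to zero. For instance, in (U7) with $\lambda_{i+1}=0$ the term $f_i^{\blambda-\balpha_i}$ may involve a weight outside $\Lambda(n)_T$. I must check that the classical relation, evaluated at the same weights, has its corresponding terms vanish in $S_\Q(n,d)$ as well. This holds because the representation has no vectors of weight with a negative entry. Also, only the first entry is shifted by $d$, so a weight in $X(n)_T\setminus\Lambda(n)_T$ has a negative entry in positions $2,\dots,n$, and that negativity persists after evaluation at $d$. So the correspondence between vanishing terms is exact.

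A secondary point is confirming that $F_{n,d}$ is a ring homomorphism defined on the $\Q[T]$-form, so that $\ev_d$ and $f_{n,d}$ compose correctly. This is recorded in \cref{R:integral-evaluation-plus-quotient}, given that the structure constants $C^s_{q,r}(T)$ are polynomials in $T$ by \cref{T:product-formula}.
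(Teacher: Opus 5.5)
Your proposal is correct and follows essentially the same route as the paper: identify $F^{\Q}_{n,d}(\xi_{q_i^{\blambda,\pm}})$ with $\pi_{n,d}(\ev_d(e_i^{\blambda}))$ and $\pi_{n,d}(\ev_d(f_i^{\blambda}))$ on $V_n^{\otimes d}$, then deduce each relation in $\SS_{\Q[T]}(n,T)$ from the corresponding relation in $\dotU_{\Q}(n,d)$ via the asymptotic injectivity of $\{F^{\Q}_{n,d}\}$. Your boundary bookkeeping is more explicit than the paper's, but one claim needs a small correction: a weight in $X(n)_T\setminus\Lambda(n)_T$ need not have a negative entry in positions $2,\dots,n$, since it may instead satisfy $\sum_i\lambda_i\neq T$; in that case $\blambda(d)$ has total $d'\neq d$, so it is still not a weight of $V_n^{\otimes d}$, and in any event every relation anchored at such a weight has all its terms map to zero on both sides, so your conclusion stands.
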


\begin{proof}  First, observe that a calculation on the basis $\{v_{\bi}\}_{\bi \in I(n,d)}$  verifies that $F^{\Q}_{n,d}\left( \xi_{q_{i}^{\blambda, +}}\right)$ and  $F^{\Q}_{n,d}\left( \xi_{q_{i}^{\blambda, -}}\right)$ act on $V_{n}^{\otimes d}$ in the same way as $\pi_{n,d}\left(\ev_{d}\left(e_{i}^{\blambda}\right)\right)$ and $\pi_{n,d}\left(\ev_{d}\left(f_{i}^{\blambda}\right)\right)$, respectively.  Therefore, the bottom square of \cref{E:UnT-SnT-commutative-square} commutes, assuming the given rules for $u_{n}$ on generators define a homomorphism.  Likewise, a check on generators demonstrates that the top square of \cref{E:UnT-SnT-commutative-square} commutes, assuming $u_{n}$ exists.

Our task, then, is to verify that the defining relations for $\dotU_{\Q [T]}(n,T)$ go to zero in $\SS_{\Q [T]}(n,T)$ under $u_{n}$.  This can be done directly using \cref{T:product-formula} and elbow grease.   We opt for an easier, albeit less direct, approach.

Let $\mathtt{r} \in 1_{\bdelta}\SS_{\Q [T]}(n,T) 1_{\bgamma}$ be one of the defining relations of $\dotU_{\Q [T]}(n,T)$ expressed as an element of $\SS_{\Q [T]}(n,T)$ by applying the rule for $u_{n}$ to the generators.   For example, $\mathtt{r}$ could equal 
\[
\left( \xi_{q_{i}^{\bgamma+\balpha_{j},+}}\right)\left( \xi_{q_{j}^{\bgamma,+}}\right)-\left(\xi_{q_{j}^{\bgamma+\balpha_{i},+}}\right)\left(\xi_{q_{i}^{\bgamma,+}}\right)
\] in $1_{\bgamma+\balpha_{i}+\balpha_{j}}\SS_{\Q [T]}(n,T) 1_{\bgamma}$.  Verifying relation (U5) amounts to showing this element equals zero in $\SS_{\Q [T]}(n,T)$ for all $\bgamma \in \Lambda(n)_{T}$.  As discussed in \cref{R:integral-evaluation-plus-quotient}, the family of maps $\{F_{n,d}^{\Q} \}_{d \in \N}$ is asymptotically injective.  Therefore, showing $\mathtt{r}$ equals zero is equivalent to showing that  $F_{n,d}^{\Q}(\mathtt{r})=0$ for all large $d$.  It follows from the discussion in the first paragraph that
\[
F_{n,d}^{\Q}(\mathtt{r}) = \pi_{n,d}\left( \ev_{d}\left( e_{i}^{\bgamma+\balpha_{j}}e_{j}^{\bgamma}-e_{j}^{\bgamma+\balpha_{i}}e_{i}^{\bgamma}\right) \right)
\] as maps on $V_{n}^{\otimes d}$. However, this map is zero since $e_{i}^{\bgamma+\balpha_{j}}e_{j}^{\bgamma}-e_{j}^{\bgamma+\balpha_{i}}e_{i}^{\bgamma}$ equals zero in $\dotU_{\Q [T]}(n,T)$ by relation~(U5).   Therefore, $\mathtt{r}=0$ in $\SS_{\Q [T]}(n,T)$, as desired.  The same argument applies equally well to the other defining relations.
\end{proof}

\begin{theorem}\label{T:un-is-surjective} Let $R= \Q [T]$ and $t = T$.  The map of $\Q [T]$-algebras
\[
u_n:\dotU_{\Q [T]}(n,T)\to \SS_{\Q [T]}(n,T)
\] is surjective.
\end{theorem}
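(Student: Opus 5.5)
We show surjectivity via the codeterminant basis: $\SS_{\Q(T)}(n,T)$ is spanned by products $\xi_q\xi_r$ with $q$ lower triangular and $r$ upper triangular (\cref{T:codeterminant-basis-Snt}; semistandard matrices are upper triangular, as noted after \cref{L:right-codeterminant-are-standard}). Over $\Q[T]$ we instead use the factorization $\xi_s = \xi_q\xi_r$ for the standard basis directly, as follows. Every product of a lower triangular and an upper triangular basis element lies in $\SS^-_{\Q[T]}(n,T)\,\SS^+_{\Q[T]}(n,T)$. So it suffices to prove two things: first, that $\SS^{\pm}_{\Q[T]}(n,T)$ lies in the image of $u_n$; second, that every $\xi_s$ is a $\Q[T]$-combination of products $\xi_q\xi_r$ with $q\in\MMat^-_n(T)$ and $r\in\MMat^+_n(T)$.

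For the first step we use the top square of \cref{E:UnT-SnT-commutative-square}, which commutes by the proof of \cref{T:map-from-dotU-to-Snt}. Given $\xi_{\tilde q}$ with $\tilde q\in\MMat^+_n(T)$, \cref{R:iota-properties} and the bijection in the proof of \cref{P:Borel-Subalgebra-maps} give, for $d\gg0$, a $q\in\Mat^+(n,d)$ with $\iota^{S^+}_{n,d,T}(\xi_q)=\xi_{\tilde q}$. By \cref{E:Borel-pi-maps}, $\pi_{n,d}$ is surjective onto $S^+(n,d)$, so there is some $x\in\dotU(\fb^+)$ with $\pi_{n,d}(x)=\xi_q$. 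Commutativity then gives
\[
u_n\bigl(\iota^{U^+}_{n,T}(x)\bigr)=\xi_{\tilde q}.
\]
This holds over $\Q$, and hence over $\Q[T]$ after scalar extension. The $-$ case is identical.

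For the second step we work by triangularity. Fix $s\in\MMat_n(\bdelta,\bbeta)_T$. Let $q$ be the lower triangular matrix whose strictly lower part is $s$'s lower part folded appropriately, and let $r$ be the upper triangular matrix carrying $s$'s upper part. Concretely, take $q_{i,j}=s_{i,j}$ for $i>j$, $r_{j,k}=s_{j,k}$ for $j<k$, and fill in the diagonals by the row and column sums through the intermediate weight $\bgamma$ with $\gamma_j=\sum_{i} s_{i,j}$ over $i\ge j$, adjusted accordingly. Apply \cref{T:product-formula}. There is a distinguished array $A$ with $A_{i,j,k}$ supported on $j=\min(i,k)$ that produces $s$ with coefficient $\VV_A(T)$, and the other $s'$ that occur have strictly fewer off-diagonal entries in a suitable order (for example, a smaller total $\sum_{i\neq k}s'_{i,k}$, or a smaller value of a weighted statistic). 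Induction on that statistic then expresses $\xi_s$ in terms of products $\xi_q\xi_r$.

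The main obstacle is that the leading coefficient $\VV_A(T)$ need not be a unit in $\Q[T]$. If $A_{1,j,1}\neq0$ it involves a factor $\binom{T-\ell}{\cdots}$. The plan is to choose the factorization so that $A_{1,j,1}=0$ for $j\ge2$, which makes $\VV_A$ a positive rational, exactly as in the proof of \cref{P:codeterminants-are-integral-combos-of-indicator-functions}. This also keeps the remainder terms in the $\Q[T]$-span. If the direct combinatorics becomes messy, the fallback is to use asymptotic injectivity (\cref{R:integral-evaluation-plus-quotient}) together with the classical fact $S(n,d)=S^-(n,d)S^+(n,d)$ to control the coefficients.
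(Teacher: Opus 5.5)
Your proposal is correct. The first half is the paper's argument verbatim: asymptotic surjectivity of $\iota^{S^{+}}_{n,d,T}$, surjectivity of $\pi_{n,d}$ onto $S^{\pm}(n,d)$, and the top square of \cref{E:UnT-SnT-commutative-square} together put $\SS^{\pm}_{\Q[T]}(n,T)$ in the image of $u_n$.

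The second half takes a different route. The paper finishes in one line by noting that every element of $\CoDetBasis(n)_{T}$ is a product $\xi_q\xi_r$ with $q$ lower and $r$ upper triangular. Strictly, that step needs the codeterminants to span $\SS_{\Q[T]}(n,T)$ over $\Q[T]$, not merely $\SS_{\Q(T)}(n,T)$ over $\Q(T)$. This follows from \cref{P:codeterminants-are-integral-combos-of-indicator-functions}: the transition matrix is an invertible integer matrix, so each $\xi_s$ is a $\Q$-combination of codeterminants. You instead prove the triangular factorization $\SS=\SS^{-}\SS^{+}$ directly from \cref{T:product-formula}. This bypasses \cref{T:codeterminant-basis-Snt}, and with it Green's theorem and the asymptotic argument. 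It even shows that each $\xi_s$ is a $\Z$-combination of such products.

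The claim you left as an assertion is true and short to check.
\begin{itemize}
\item Since $q$ is lower and $r$ is upper triangular, every array $A'$ contributing to $\xi_q\xi_r$ is supported on $j\le\min(i,k)$.
\item Set $L=\sum_{i>j}q_{i,j}$, $U=\sum_{j<k}r_{j,k}$, and $M=\sum_{j<\min(i,k)}A'_{i,j,k}$.
\item Splitting the triples by type gives $\sum_{i\neq k}s'_{i,k}\le L+U-M$, while the off-diagonal count of $s$ is exactly $L+U$.
\item If $M=0$, then $A'$ is the array $A$ supported on $j=\min(i,k)$, and $s'=s$. So $A$ is the unique array producing $s$, and every other $s'$ has strictly smaller off-diagonal count.
\end{itemize}

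Your worry about the leading coefficient does not arise. Every contributing array has $A'_{1,j,1}=0$ for $j\ge 2$ automatically, so all $\VV_{A'}(T)$ are integers. Moreover $\VV_A(T)=1$, since each multinomial in $\VV_A$ has a single nonzero part. Hence $\xi_s=\xi_q\xi_r-\sum_{s'\neq s}C^{s'}_{q,r}(T)\,\xi_{s'}$ with integer coefficients, and the fallback is unnecessary.

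One slip: $\sum_{i\ge j}s_{i,j}$ is $r_{j,j}$, not $\gamma_j$. The intermediate weight determined by your array is $\gamma_j=s_{j,j}+\sum_{k>j}s_{j,k}+\sum_{i>j}s_{i,j}$, with $q_{j,j}=\sum_{k\ge j}s_{j,k}$ and $r_{j,j}=\sum_{i\ge j}s_{i,j}$.
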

\begin{proof}  Let $q \in \MMat^{+}_{n}(T)$.  As stated in \cref{R:iota-properties}, $\iota^{S^{+}}_{n,d,T}: S^{+}_{\Q [T]}(n,d) \to \SS^{+}_{\Q [T]}(n,T)$ is asymptotically surjective.  That is, we may choose $d$ large enough to ensure that $q(d)$ is an element of $\Mat^{+}(n,d)$ and also that $\iota^{S^{+}}_{n,d,T}(\xi_{q(d)}) = \xi_{q}$.  The map $\pi_{n,d}: \dotU_{\Q[T]}(\fb^{+}) \to S^{+}_{\Q [T]}(n,d)$ given in \cref{E:Borel-pi-maps} is surjective.  Therefore, we can choose $X_{q(d)} \in \dotU_{\Q[T]}(\fb^{+})$ such that $\pi_{n,d}(X_{q(d)}) = \xi_{q(d)}$ and, hence, 
\[
u_{n}\left( \iota^{U^{+}}_{n,T}\left( X_{q(d)}\right)\right)=\iota^{S^{+}}_{n,d,T}\left( \pi_{n,d}\left( X_{q(d)}\right)\right) = \xi_{q}.
\] Therefore $\SS^{+}_{\Q [T] }(n,T)$ lies in the image of $u_{n}$.  A similar argument shows $\SS^{-}_{\Q [T] }(n,T)$ lies in the image of $u_{n}$.  Finally, since the codeterminant basis $\CoDetBasis  (n)_{T}$ is a subset of $\SS^{-}_{\Q [T] }(n,T)\SS^{+}_{\Q [T] }(n,T)$, we conclude that $u_{n}$ is surjective.
\end{proof}

Notice that the map $u_{n}$ satisfies $u_{n}\circ\tau =\tau \circ u_{n}$. For a characteristic zero field $\k$ and $t \in \k$, the previous result, along with base change along the evaluation map $\ev_{d}: \Q [T] \to \k$ at $T=t$, yields the following.
\begin{corollary}\label{C:un-is-surjective}  There is a surjective map of locally unital $\k$-algebras
\begin{equation}\label{E:surjective-psi-map}
\psi = \psi_{n,t}:\dotU_{\k}(n,t)\twoheadrightarrow \SS_{\k}(n,t)
\end{equation}
for all $n \in \N \cup \{\infty \}$ and $t \in \k$.
\end{corollary}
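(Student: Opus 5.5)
We will obtain $\psi_{n,t}$ from $u_{n}$ of \cref{T:un-is-surjective} by extending scalars along the ring homomorphism $\ev_{t}:\Q[T]\to\k$, $p(T)\mapsto p(t)$. Surjectivity is preserved by right exactness of $-\otimes_{\Q[T]}\k$, so
\[
u_{n}\otimes\k:\dotU_{\Q[T]}(n,T)\otimes_{\Q[T]}\k\twoheadrightarrow \SS_{\Q[T]}(n,T)\otimes_{\Q[T]}\k .
\]
It then remains to identify both sides with $\dotU_{\k}(n,t)$ and $\SS_{\k}(n,t)$, compatibly with the locally unital structures.

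For the right-hand side we use the basis. By \cref{P:Snt-Basis-Theorem}, $\SS_{\Q[T]}(n,T)$ is free over $\Q[T]$ on $\{\xi_{q}\mid q\in\MMat_{n}(T)\}$. Its structure constants $C^{s}_{q,r}(T)$ are polynomials whose evaluation at $t$ is $C^{s(t)}_{q(t),r(t)}(t)$, by \cref{T:product-formula} and the polynomial formula \cref{E:Integer-valued-VAT} for $\VV_{A}$. Since $q\mapsto q(t)$ and $A\mapsto A(t)$ are bijections (linear translation, \cref{SSS:translation-of-t-shifted-combinatorics}), the map $\xi_{q}\otimes 1\mapsto \xi_{q(t)}$ is an algebra isomorphism $\SS_{\Q[T]}(n,T)\otimes_{\Q[T]}\k\cong\SS_{\k}(n,t)$. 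It sends $1_{\bbeta}\otimes 1$ to $1_{\bbeta(t)}$. This is the same isomorphism already recorded before \cref{E:Codet-Basis-for-Snt}, but through $\Q[T]$ rather than $\IntegerValuedRing$.

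For the left-hand side we use presentations. $\dotU_{\Q[T]}(n,T)$ is given by generators $1_{\blambda},e_{i}^{\blambda},f_{i}^{\blambda}$, $\blambda\in X(n)_{T}$, and relations (U0)--(U7). The coefficients in these relations lie in $\Q[T]$, and only (U7) involves $T$, via $\lambda_{i}-\lambda_{i+1}$. Base change of a presented algebra is presented by the same generators and the relations with coefficients specialized. Relabeling weights by $\blambda\mapsto\blambda(t)$ gives a bijection $X(n)_{T}\to X(n)_{t}$ that preserves the differences $\lambda_{i}-\lambda_{i+1}$ after evaluation; note that $\lambda_{1}-\lambda_{2}$ specializes correctly because $\lambda_{1}\in T-\Z$. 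The relations therefore match those of $\dotU_{\k}(n,t)$ exactly, giving $\dotU_{\Q[T]}(n,T)\otimes_{\Q[T]}\k\cong\dotU_{\k}(n,t)$.

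The main point to check is this identification. One could instead try the PBW-basis route used in \cref{SS:Maps-Among-Enveloping-and-Schur-Algebras}, but the presentation argument avoids needing freeness. Composing the three maps gives $\psi_{n,t}$. It sends $1_{\blambda}$ to $1_{\blambda}$ or to $0$, so it is a map of locally unital algebras. Its formula on generators is the evaluation of $u_{n}$, with $e_{i}^{\blambda}\mapsto\xi_{q_{i}^{\blambda,+}}$ and so on. The same argument works for $n=\infty$, since \cref{T:un-is-surjective} was stated for all $n\in\N\cup\{\infty\}$.
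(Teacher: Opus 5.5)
Your proposal is correct and takes the same route as the paper: the corollary comes from \cref{T:un-is-surjective} by base change along evaluation $T\mapsto t$, and surjectivity survives because $-\otimes_{\Q[T]}\k$ preserves surjections. The identifications you add are sound, and the paper leaves them implicit: on the Schur side you use the free basis $\xi_{q}$ with polynomial structure constants, and on the enveloping-algebra side you specialize the presentation (U0)--(U7) under $\blambda\mapsto\blambda(t)$ instead of invoking the PBW basis.
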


\begin{remark}\label{R:presentation-of-Snt}  Like in \cite{DotyGiaquinto}, the above result should provide a presentation for $\SS_{\k}(n,t)$ by generators and relations.  We expect the kernel of the map given in \cref{C:un-is-surjective} will be the two-sided ideal generated by the set $\left\{1_{\blambda} \mid \blambda \in X(n)_{t} \backslash \Lambda(n)_{t} \right\}$.  See~\cite{Ryba} for related results.  Since we do not need this description, we leave it as a question.
\end{remark}

\subsection{Parabolic Category  \texorpdfstring{$\mathcal{O}$}{O}}\label{SS:Parabolic-Category-O}

Throughout this section $\k$ is a characteristic zero field and $t \in \k$. Let $\Phi_{\fl} = \left\{\balpha_{i,j} \mid 2 \leq  i \neq j \leq  n \right\}$ and let $\fl \cong \gl_{1}(\k ) \oplus \gl_{n-1}(\k)$ be the Levi subalgebra of $\gl_{n}(\k)$ corresponding to the subroot system $\Phi_{\fl}$.  Let $\fp^{\pm}$ be the parabolic subalgebra corresponding to $\Phi_{\fp^{\pm}} = \Phi_{\fl} \cup \Phi^{\pm}$.  Likewise, let $\fu^{\pm}$ be the nilpotent subalgebra corresponding to $\Phi_{\fp^{\pm}} \backslash \Phi_{\fl}$. 

Corresponding to these choices, parabolic category $\mathcal{O}^{\fp}$ is defined to be the full subcategory of $U(\gl_{n}(\k))$-modules $M$ that satisfy the following conditions:
\begin{itemize}
\item [(P1)] $M$ is finitely generated as a $U(\gl_{n}(\k ))$-module;
\item [(P2)] $M$ is finitely semisimple when restricted to $U(\fl)$ (i.e., $M$ is isomorphic to a direct sum of finite-dimensional simple $U(\fl )$-modules);
\item [(P3)] $M$ is locally $U(\fu^{+})$-finite (i.e., every element of $M$ generates a finite-dimensional $U(\fu^{+})$-module).
\end{itemize}

For a fixed $t \in \k$, let $\mathcal{P}_{t}$ be the full subcategory of finitely generated $U(\gl_{n}(\k ))$-modules which have a weight space decomposition with weights lying in $\Lambda(n)_{t}$.

\begin{lemma}\label{L:Pt-is-a-subcategory-of-Op}
The category $\mathcal{P}_{t}$ is a full subcategory of $\mathcal{O}^{\fp}$.
\end{lemma}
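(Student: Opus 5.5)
We verify (P1)--(P3) directly for $M \in \mathcal{P}_{t}$; (P1) holds by definition of $\mathcal{P}_{t}$. The only input is the shape of $\Lambda(n)_{t}$: every weight $\bgamma$ of $M$ has $\gamma_{2},\dotsc,\gamma_{n} \in \Z_{\geq 0}$ and $\gamma_{1} = t - \sum_{i\geq 2}\gamma_{i}$. We treat the case $n$ finite.

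For (P3), take a weight vector $m \in M_{\bgamma}$. Since $\fu^{+}$ is spanned by $E_{1,j}$ for $j \in [2,n]$, a monomial in these root vectors of total length $k$ sends $m$ into $M_{\bgamma + \sum_{j}c_{j}\balpha_{1,j}}$ with $\sum_{j} c_{j} = k$. The $j$th coordinate of this weight, for $j \geq 2$, is $\gamma_{j} - c_{j}$, and it must be nonnegative for the weight space to be nonzero. Hence only monomials with $c_{j} \leq \gamma_{j}$ act nontrivially, and $U(\fu^{+})m$ is spanned by finitely many PBW monomials applied to $m$. An arbitrary element of $M$ is a finite sum of weight vectors, so $M$ is locally $U(\fu^{+})$-finite.

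For (P2), we use the same idea for $\fl \cong \gl_{1}(\k) \oplus \gl_{n-1}(\k)$. The $\gl_{1}$ factor, spanned by $E_{1,1}$, acts semisimply because $M$ is a weight module. The $\gl_{n-1}(\k)$ factor acts semisimply on $\bigoplus_{i\geq 2}\Z\varepsilon_{i}$-weights, and these lie in $\Z_{\geq 0}^{n-1}$. The total degree $N = \sum_{i\geq 2}\gamma_{i}$ is preserved by $\fl$, since $E_{i,j}$ with $i,j\geq 2$ shifts the weight by $\varepsilon_{i}-\varepsilon_{j}$. For fixed $N$ there are only finitely many such weights, so each $m \in M$ generates a $U(\fl)$-submodule with finitely many weights. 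Since $M$ is also locally $U(\fu^{+})$-finite and finitely generated, each weight space is finite-dimensional: $M$ is a quotient of a finite sum of modules $U(\gl_{n})\otimes_{U(\fp^{+})} F$ with $F$ finite-dimensional, and the weight spaces of such modules are finite-dimensional because $U(\fu^{-})$ has finite-dimensional weight spaces. Therefore $U(\fl)m$ is finite-dimensional.

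Semisimplicity of $U(\fl)m$ then follows from Weyl's theorem for the reductive Lie algebra $\fl$ acting with $\fh$ semisimple. The $\mathfrak{sl}_{n-1}$ part is semisimple, and the center of $\fl$ lies in $\fh$, which acts diagonalizably. This gives (P2).

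The main obstacle is the local finiteness step in (P2), specifically the finite-dimensionality of weight spaces. If the argument above is too loose, the fallback is to pass through $\Sntmodfg$. By Corollary~\ref{C:un-is-surjective}, we can compare $M$ with an $\SS(n,t)$-module using $\psi$: each $1_{\bgamma}\SS(n,t)1_{\bbeta}$ is finite-dimensional, and only finitely many $\bbeta$ occur among the generators.
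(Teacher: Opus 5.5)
Your (P1), your (P3), and the grading of $M$ by $N=\sum_{i\geq 2}\gamma_{i}$ in (P2) all match the paper. The gap is the finite-dimensionality step in (P2), which is circular as written.

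To present $M$ as a quotient of a finite direct sum of modules $U(\gl_{n}(\k))\otimes_{U(\fp^{+})}F$ with $F$ finite-dimensional, you need the $U(\fp^{+})$-submodule generated by each generator to be finite-dimensional. That submodule is $U(\fp^{+})m=U(\fl)U(\fu^{+})m$, so its finite-dimensionality is exactly the local $\fl$-finiteness you are trying to prove. Local $U(\fu^{+})$-finiteness gives no control over the action of $\fl$.

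The fallback is not available either. \cref{C:un-is-surjective} lets you inflate $\SS(n,t)$-modules to $U(\gl_{n}(\k))$-modules, which is the wrong direction. To regard an arbitrary $M\in\mathcal{P}_{t}$ as an $\SS(n,t)$-module, you would need its $\dotU_{\k}(n,t)$-action to factor through $\psi$. That is the conjectural presentation of \cref{R:presentation-of-Snt}, which the paper explicitly leaves open (see \cref{R:conjectural-category-equivalence}).

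The repair within your framework is short, because you never need finite-dimensional weight spaces of $M$. Write $\mathfrak{n}_{\fl}^{+}$ (resp.\ $\mathfrak{n}_{\fl}^{-}$) for the span of the $E_{i,j}$ with $2\leq i<j\leq n$ (resp.\ $2\leq j<i\leq n$). For a weight vector $m$, PBW gives $U(\fl)m=U(\mathfrak{n}_{\fl}^{-})U(\fh)U(\mathfrak{n}_{\fl}^{+})m$. Only finitely many weights of degree $N$ occur, and each weight space of $U(\mathfrak{n}_{\fl}^{\pm})$ is finite-dimensional. So first $U(\mathfrak{n}_{\fl}^{+})m$, and then $U(\fl)m$, is finite-dimensional, and your appeal to Weyl's theorem finishes (P2).

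Alternatively, replace $\fp^{+}$ by $\fb^{+}$ in your argument. Upper finiteness of $(\Lambda(n)_{t},\preceq)$ from \cref{L:upper-finite-axioms} gives local $U(\mathfrak{n}^{+})$-finiteness by the same weight count as in (P3). Then $M$ lies in BGG category $\mathcal{O}$ and has finite-dimensional weight spaces.

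The paper avoids the issue differently. The degree-$p$ piece $N_{p}$ is a weight $\gl_{n-1}(\k)$-module with weights in $\Lambda(n-1,p)$. By the Doty--Giaquinto presentation it is therefore a module for the finite-dimensional semisimple algebra $S(n-1,p)$, which gives finite-dimensionality and semisimplicity at once.
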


\begin{proof} 
Condition (P1) is obviously satisfied.

Let $N$ be a module in $\mathcal{P}_{t}$.  If $N$ is restricted to the subalgebra of $\fl$ isomorphic to $\gl_{n-1}(\k)$, then the weights of $N$  lie in $\Lambda(n-1)$.  For $p \in \Z_{\geq 0}$, let 
\[
N_{p} = \bigoplus_{\substack{\bmu \in \Lambda(n)_{t} \\ \sum_{i \in [2,n]} \mu_{i} =p }} N_{\bmu }.
\]  The presentation given in \cite[Theorem 2.1]{DotyGiaquinto} demonstrates that $N_{p}$ is a $S(n-1,p)$-module and, hence, is a direct sum of finite-dimensional simple $S(n-1,p)$-modules.  Therefore, via the canonical surjective map $U(\gl_{n-1}(\k )) \to S(n-1,p)$, $N_{p}$ is a direct sum of finite-dimensional simple $U(\gl_{n-1}(\k))$-modules.  However, since $N_{p}$ is spanned by weight vectors, it follows that $N_{p}$ is a finitely semisimple $U(\fl )$-module.  Therefore 
\[
N = \bigoplus_{p \geq 0} N_{p}
\] is a finitely semisimple $U(\fl)$-module.  Therefore, condition (P2) holds.

Finally, to show $N$ satisfies condition (P3) it suffices to show that each weight vector $x \in  N$ generates a finite-dimensional $U(\fu^{+})$-module. Since all weights of $N$ lie in $\Lambda(n)_{t}$, weight considerations show that only finitely many PBW basis vectors in $U(\fu^{+})$ can be applied to a weight vector of $N$ and still be nonzero.  Therefore, $U(\fu^{+})x$ is finite-dimensional, as desired.   
\end{proof}
 
Via the surjective map $\psi$ given in \cref{E:surjective-psi-map} and the identification of $\dotU_{\k}(n,t)$-modules and $U(\gl_{n}(\k))$-modules with a weight space decomposition as described in \cref{R:Udot-Remark}, there is a fully faithful inflation functor
\[
\Psi=\Psi_{n,t}:\Sntmod \to U(\gl_{n}(\k))\text{-mod}.
\]  

\begin{lemma}\label{L:Snt-modules-to-Category-O} The functor $\Psi$ restricts to define a fully faithful functor 
\[
\Psi: \Sntmodfg \to \mathcal{P}_{t}.
\]
\end{lemma}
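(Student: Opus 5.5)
Full faithfulness is inherited from $\Psi$ on all of $\Sntmod$: inflation along the surjection $\psi$ of \cref{C:un-is-surjective} is fully faithful, and the identification of \cref{R:Udot-Remark} between $\dotU_{\k}(n,t)$-modules and weight $U(\gl_{n}(\k))$-modules with weights in $X(n)_{t}$ is an isomorphism of categories. So it only remains to check that $\Psi(M)$ lies in $\mathcal{P}_{t}$ whenever $M$ is a finitely generated $\SS(n,t)$-module. Two things are needed: the weights of $\Psi(M)$ lie in $\Lambda(n)_{t}$, and $\Psi(M)$ is finitely generated over $U(\gl_{n}(\k))$.

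For the weights, write $M=\bigoplus_{\bbeta\in\Lambda(n)_{t}}1_{\bbeta}M$, which holds by definition of an $\SS(n,t)$-module. Under $\psi$, each $1_{\blambda}$ with $\blambda\in X(n)_{t}\setminus\Lambda(n)_{t}$ maps to $0$, and each $1_{\blambda}$ with $\blambda\in\Lambda(n)_{t}$ maps to $1_{\blambda}$. By \cref{R:Udot-Remark}, the $\blambda$-weight space of $\Psi(M)$ is $1_{\blambda}M$. Hence $\Psi(M)$ is a weight module whose weights lie in $\Lambda(n)_{t}$.

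For finite generation, take generators $m_{1},\dots,m_{k}$ of $M$ over $\SS(n,t)$. Decomposing each one into weight components, we may assume every $m_{a}$ lies in a single weight space; this still gives finitely many generators. Since $\psi$ is surjective, $M=\SS(n,t)\sum_{a}\k m_{a}=\psi(\dotU_{\k}(n,t))\sum_{a}\k m_{a}$. Each element of $\dotU_{\k}(n,t)$ acts on a weight vector through $U(\gl_{n}(\k))$: a word in the generators $e_{i}^{\blambda},f_{i}^{\blambda},1_{\blambda}$ applied to $m_{a}$ is either $0$ or the corresponding word in $e_{i},f_{i}$ applied to $m_{a}$. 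Therefore $\Psi(M)=\sum_{a}U(\gl_{n}(\k))m_{a}$, which is finitely generated.

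The main subtlety is this last step, matching $\dotU_{\k}(n,t)$-submodules generated by weight vectors with $U(\gl_{n}(\k))$-submodules. Care is needed because $U(\gl_{n}(\k))$ also contains the Cartan subalgebra $\fh$, and $\fh$ acts by scalars on weight vectors, so it does not enlarge the span. With this, the essential image lies in $\mathcal{P}_{t}$, and $\Psi$ restricts as claimed. By \cref{L:Pt-is-a-subcategory-of-Op}, the image then also sits inside $\mathcal{O}^{\fp}$.
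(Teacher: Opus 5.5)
Your proposal is correct and follows the same approach as the paper: full faithfulness comes from the surjectivity of $\psi$, and the image lands in $\mathcal{P}_{t}$ because the weights of $\Psi(M)$ lie in $\Lambda(n)_{t}$ and finite generation is preserved. Where the paper calls these points "immediate," you supply the details (reducing to weight-vector generators, and noting that $\fh$ acts by scalars), and those details are sound.
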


\begin{proof}  It is immediate that the inflation of any finitely generated $\SS(n,t)$-module is finitely generated, has a weight space decomposition, and the weights lie in $\Lambda(n)_{t}$.  Therefore, the image of $\Psi$ lies in $\mathcal{P}_{t}$.  The fact that $\Psi$ is fully faithful follows from the fact that $\psi$ is surjective.
\end{proof}

\begin{remark}\label{R:conjectural-category-equivalence}  If the presentation of $\SS(n,t)$ given in \cref{R:presentation-of-Snt} is valid, then the category $\Sntmodfg$ is isomorphic to $\mathcal{P}_{t}$ via the functor $\Psi$. 
\end{remark}

We fix $\fh_{0}=\fh$ and $\fb_{0}=\fb \cap \fl$ as our choice of Cartan and Borel subalgebras of $\fl$, respectively.  Given $\blambda \in X(n)_{t}$, let $L_{\fl}(\blambda )$ be the simple $\fl$-module of highest weight $\blambda$ with respect to these subalgebras of $\fl$.  Let
\[
V(\blambda ) = U(\gl_{n}(\k )) \otimes_{U(\fp^{+})} L_{\fl}(\blambda )
\] be the generalized (or parabolic) Verma module for $\gl_{n}(\k )$ with highest weight $\blambda$.  Let $\widehat{L}(\blambda )$ be the simple head of $V(\blambda )$.

\begin{proposition}\label{L:standard-is-verma}  Let $t \in \k$ and let $\blambda \in \Lambda^{+}(n)_{t}$, then 
\[
\Psi_{n,t}(\Delta(\blambda )) \cong V(\blambda ).
\]
\end{proposition}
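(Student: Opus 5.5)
The plan is to build a surjection $V(\blambda)\twoheadrightarrow \Psi(\Delta(\blambda))$ from a universal property, and then show it is injective by comparing weight space dimensions.

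\emph{The map.} Let $v=\bar 1_{\blambda}\in\Delta(\blambda)$. We first show that $\fu^{+}$ kills $v$. Any $e_i$ acting on $v$ lands in $1_{\blambda+\balpha_i}\Delta(\blambda)$. By \cref{L:upper-finite-axioms}, $\Delta(\blambda)$ has a basis indexed by $Y(\blambda)_t$, and $Y(\bdelta,\blambda)_t$ is empty unless $\bdelta\preceq\blambda$. Since $\blambda+\balpha_i\succ\blambda$, this weight space is zero; the same argument handles every positive root vector $E_{i,j}$.

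Next we show that $v$ generates a copy of $L_{\fl}(\blambda)$ under $U(\fl)$. The weights occurring in $\Delta(\blambda)$ all satisfy $\sum_{i\ge2}\mu_i=\sum_{i\ge 2}\lambda_i=:p$ after the $\fl$-action, because $\fl$ preserves the first coordinate. By the argument of \cref{L:Pt-is-a-subcategory-of-Op}, the corresponding summand is a semisimple $S(n-1,p)$-module. The vector $v$ is a highest weight vector for $\fb_0$ of dominant weight $(\lambda_2,\dots,\lambda_n)$, so $U(\fl)v\cong L_{\fl}(\blambda)$.

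Frobenius reciprocity then gives a map $V(\blambda)\to\Psi(\Delta(\blambda))$ sending $1\otimes v_{\blambda}\mapsto v$. This map is surjective because $\Delta(\blambda)=\SS(n,t)\bar 1_{\blambda}$ is cyclic and $\psi$ is surjective (\cref{C:un-is-surjective}).

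\emph{Injectivity.} We compare weight space dimensions. Since $V(\blambda)\cong U(\fu^{-})\otimes L_{\fl}(\blambda)$, for each weight $\bdelta$ we have
\[
\dim V(\blambda)_{\bdelta}=\sum_{\bnu}\dim L_{\fl}(\blambda)_{\bnu}\cdot\#\{\text{ways to write }\bnu-\bdelta\text{ as a sum of roots }\balpha_{1,j},\ j\ge 2\}.
\]
Here $U(\fu^-)$ is a polynomial algebra in $E_{2,1},\dots,E_{n,1}$. Explicitly, this counts pairs consisting of a vector $(k_2,\dots,k_n)\in\Z_{\ge0}^{n-1}$ and a weight vector of $L_{\fl}(\blambda)$ of weight $\bdelta+\sum_j k_j(\varepsilon_1-\varepsilon_j)$. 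By Kostka numbers, $\dim L_{\fl}(\blambda)_{\bnu}$ is the number of semistandard tableaux of shape $(\lambda_2,\dots,\lambda_n)$ with content $(\nu_2,\dots,\nu_n)$ in the letters $2,\dots,n$.

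On the other side, $\dim\Delta(\blambda)_{\bdelta}=|Y(\bdelta,\blambda)_t|$. This is the number of $q\in\MMat_n(\bdelta,\blambda)_t$ with $q^*$ semistandard, i.e. semistandard fillings of $Y^{\blambda(d)}$ for $d\gg0$ with content $\bdelta(d)$. In such a filling the first row is all $1$'s except at most finitely many entries, say $k_j$ copies of $j$ for each $j\ge 2$. Rows $2,\dots,n$ contain no $1$'s and form a semistandard tableau of shape $(\lambda_2,\dots,\lambda_n)$ in the letters $2,\dots,n$. For $d\gg0$ the first row is long enough that the column-strictness constraints between row $1$ and row $2$ are automatic; this is the remark after \cref{L:right-codeterminant-are-standard}. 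So fillings are in bijection with pairs (a vector $(k_j)$, a semistandard $\fl$-tableau of the complementary content), which matches the count for $V(\blambda)_{\bdelta}$.

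We expect this bijection to be the main obstacle. The point to check carefully is that the condition on $q_{2,y}$ is automatic for large $d$, and that the weight bookkeeping $\bnu=\bdelta+\sum_j k_j\balpha_{1,j}$ matches the first-row multiplicities. Once it is established, the weight spaces of both modules are finite-dimensional and of equal dimension, so the surjection is an isomorphism weight space by weight space.
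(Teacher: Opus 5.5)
Your proof is correct. The construction of the surjection $V(\blambda)\twoheadrightarrow\Psi_{n,t}(\Delta(\blambda))$ matches the paper's, and you justify $U(\fl)\bar{1}_{\blambda}\cong L_{\fl}(\blambda)$ more carefully than the paper does. Your injectivity argument, however, takes a genuinely different route.

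The paper never compares the two dimension counts directly. It uses Jantzen's simplicity criterion (Humphreys, Theorem 9.12) to see that $V(\blambda)$ is simple when $t\notin\Z$, so the surjection is an isomorphism for such $t$. It then notes that $\dim V(\blambda)_{\bmu}$ (a parabolic Kostant partition function) and $\dim\Delta(\blambda)_{\bmu}=|Y(\bmu,\blambda)_{t}|$ are both unchanged under linear translation in $t$. So the equality obtained at non-integral $t$ holds for every $t$.

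You instead give an explicit bijection. The multiplicities $k_{j}$ of the letters $j\geq 2$ in the first row of the filling of $Y^{\blambda(d)}$ by $q^{*}(d)$ correspond to the monomial $\prod_{j}E_{j,1}^{k_{j}}$ in the polynomial algebra $U(\fu^{-})$. Rows $2,\dotsc,n$ form a semistandard $\fl$-tableau counted by a Kostka number, and column strictness against row $1$ is automatic for $d\gg 0$.

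The paper's route is shorter but depends on an external simplicity theorem and a genericity trick. Yours is elementary and self-contained, makes the $t$-independence visible rather than asserted, and shows concretely that the basis of $\Delta(\blambda)$ indexed by $Y(\blambda)_{t}$ lines up with a PBW-type basis of the parabolic Verma module.

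Three small corrections:
\begin{enumerate}
\item The basis of $\Delta(\blambda)$ indexed by $Y(\blambda)_{t}$ comes from Brundan--Stroppel (the discussion preceding their Theorem 5.9), not from \cref{L:upper-finite-axioms}.
\item The statement about $\sum_{i\geq 2}\mu_{i}$ should concern the weights of $U(\fl)\bar{1}_{\blambda}$, not all weights of $\Delta(\blambda)$.
\item You should say explicitly that for $\bdelta\notin\Lambda(n)_{t}$ both weight spaces vanish, since $Y(\bdelta,\blambda)_{t}$ is only defined for $\bdelta\in\Lambda(n)_{t}$.
\end{enumerate}
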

\begin{proof}   By construction, 
\[
\Delta(\blambda ) = \bigoplus_{\substack{ \bmu \in X(n)_{t}\\ \blambda  \not\preceq \bmu }} \Delta(\blambda )_{\bmu}.
\]  The condition $ \blambda  \not\preceq \bmu $ is precisely the requirement that $\bmu$ is not greater than $\blambda$ in the dominance order on weights of $\gl_{n}(\k)$.  Thus, by construction, $\Delta(\blambda )$ is generated by $\bar{1}_{\blambda}$, a highest weight vector of weight $\blambda$ which generates an $\fl$-composition factor isomorphic to $L_{\fl}(\blambda )$.  By the universal property of $V(\blambda )$, there is a surjective homomorphism 
\begin{equation}\label{E:Verma-to-Standard}
V(\blambda ) \twoheadrightarrow \Delta(\blambda ).
\end{equation}

It is well known (e.g., by~\cite[Theorem 9.12]{Humphreys}) that  $V(\blambda )$ is simple whenever $t$ is not an integer and, hence, the map is an isomorphism in this case.  In particular, 
\[
\dim_{\k} \left(  V(\blambda )_{\bmu}\right) = \dim_{\k} \left( \Delta(\blambda )_{\bmu} \right)
\] for all $\bmu \in X(n)_{t}$.  If we show these dimensions are always finite and are independent of $t$, then the map given in \cref{E:Verma-to-Standard} is an isomorphism for all $t$.  However, $\dim_{\k} \left(  V(\blambda )_{\bmu}\right)$ can be computed using the parabolic version of Kostant's partition function.  On the other hand, $\dim_{\k} \left( \Delta(\blambda )_{\bmu} \right) = | Y(\bmu , \blambda )_{t}|$.  Therefore, both dimensions are finite numbers that can be computed combinatorially without reference to $t$.
\end{proof}

The functor $\Psi$ sends the head of $\Delta(\blambda)$ to the head of $V(\blambda)$. With this in mind, the following result is immediate.
\begin{corollary}\label{C:Decomposition-Multiplicities-and-Category-O}
For all $\blambda, \bmu$ in $\Lambda^{+}(n)_{t}$, $\Psi_{n,t}\left(L(\blambda) \right) = \widehat{L}(\blambda )$ and the composition multiplicities for $\SS(n,t)$ satisfy:
\[
 \left[\Delta (\blambda) : L(\bmu )  \right] = \left[V(\blambda ) : \widehat{L}(\bmu ) \right].
\]
\end{corollary}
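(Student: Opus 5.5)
The plan is to combine three earlier facts: $\Psi_{n,t}$ is exact, it is fully faithful by \cref{L:Snt-modules-to-Category-O}, and $\Psi_{n,t}(\Delta(\blambda))\cong V(\blambda)$ by \cref{L:standard-is-verma}. Exactness is clear because $\Psi$ is inflation along the surjection $\psi$ of \cref{C:un-is-surjective}, followed by the identification of \cref{R:Udot-Remark}. That identification changes neither the underlying vector space nor the lattice of submodules.

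\textbf{Step 1: $\Psi(L(\blambda))\cong \widehat{L}(\blambda)$.} Since $\psi$ is surjective, the $\SS(n,t)$-submodules of a module $M$ coincide with the $U(\gl_{n}(\k))$-submodules of $\Psi(M)$. Hence $\Psi$ sends simple modules to simple modules, and maximal submodules to maximal submodules. It follows that $\Psi(R(\blambda))$ is the unique maximal submodule of $\Psi(\Delta(\blambda))\cong V(\blambda)$, so
\[
\Psi(L(\blambda))=\Psi(\Delta(\blambda))/\Psi(R(\blambda))\cong \operatorname{head}V(\blambda)=\widehat{L}(\blambda).
\]
Alternatively, one can observe that $\Psi(L(\blambda))$ is a simple module with highest weight $\blambda$ in $\mathcal{O}^{\fp}$, using \cref{L:Pt-is-a-subcategory-of-Op}, and is therefore $\widehat{L}(\blambda)$.

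\textbf{Step 2: multiplicities.} The multiplicity $[\Delta(\blambda):L(\bmu)]$ is finite, because $\Delta(\blambda)_{\bmu}$ is finite-dimensional and, by upper finiteness, only finitely many composition factors can contribute to a given weight space. It can be computed as the number of subquotients isomorphic to $L(\bmu)$ in any finite filtration that detects the $\bmu$-weight space. Concretely, $[\Delta(\blambda):L(\bmu)]$ equals the multiplicity of $L(\bmu)$ in any filtration of $\Delta(\blambda)$ whose layers are simple or have zero $\bmu$-weight space. Applying the exact functor $\Psi$ to such a filtration produces a filtration of $V(\blambda)$ with the same properties. By Step 1, $\Psi$ matches the layers $L(\bnu)\leftrightarrow\widehat{L}(\bnu)$, and it preserves weight spaces. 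Therefore the counts agree: $[V(\blambda):\widehat{L}(\bmu)]=[\Delta(\blambda):L(\bmu)]$.

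The only point needing care is making the composition multiplicity well defined for the possibly infinite-length module $\Delta(\blambda)$. I would handle this through weight-space dimensions. Because $\Delta(\blambda)_{\bmu}$ is finite-dimensional, the multiplicity of $L(\bmu)$ is determined by finitely many weight spaces of $\Delta(\blambda)$. Since $\Psi$ is the identity on weight spaces, the same dimension count applies verbatim on the $V(\blambda)$ side.
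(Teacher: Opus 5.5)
Your proposal is correct and takes the same route as the paper. The paper just observes that $\Psi$ carries the head of $\Delta(\blambda)\cong\Psi^{-1}$-preimage of $V(\blambda)$ to the head of $V(\blambda)$ and calls the multiplicity statement immediate; your Step 1 (inflation along the surjection $\psi$ induces an isomorphism of submodule lattices) and Step 2 (transport of filtrations, layer by layer) spell out exactly why. As a minor simplification, when $n$ is finite, $V(\blambda)$ has finite length in $\mathcal{O}^{\fp}$, so that lattice isomorphism already gives $\Delta(\blambda)$ an honest composition series, and the weight-space bookkeeping for possibly infinite-length modules is not needed.
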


\makeatletter
\renewcommand*{\@biblabel}[1]{\hfill#1.}
\makeatother

\bibliographystyle{alpha}
\bibliography{interpolating-schur-algebras}

\end{document}